\def\titlerunning#1{\gdef\titrun{#1}}
\def\author#1{\gdef\autrun{\def\and{\unskip, }#1}\gdef\@author{#1}}
\def\address#1{{\def\and{\\\hspace*{18pt}}\renewcommand{\thefootnote}{}%
\footnote {#1}}%
\markboth{\autrun}{\titrun}}
\def\email#1{e-mail: #1}
\def\subjclass#1{{\renewcommand{\thefootnote}{}%
\footnote{\emph{Mathematics Subject Classification (2010):} #1}}}
\def\keywords#1{\par\medskip
\noindent\textbf{Keywords.} #1}
\numberwithin{equation}{section}
\newcommand{\defword}[1]{\textbf{#1}}
\renewcommand{\Re}{\operatorname{Re}}
\newcommand{\len}{}
\newcommand{\nate}{}
\newtheorem{theorem}{Theorem}[section]
\newtheorem{assumption}[theorem]{Assumption}
\newtheorem{corollary}[theorem]{Corollary}
\newtheorem{lemma}[theorem]{Lemma}
\newtheorem{proposition}[theorem]{Proposition}
\newtheorem{question}[theorem]{Question}
\theoremstyle{definition}
\newtheorem{definition}[theorem]{Definition}
\newtheorem{example}[theorem]{Example}
\newtheorem{notation}[theorem]{Notation}
\theoremstyle{remark}
\newtheorem{remark}[theorem]{Remark}
\begin{document}

\titlerunning{Strong hypercontractivity}

\title{Strong hypercontractivity and logarithmic Sobolev inequalities
  on stratified complex Lie groups}

\author{Nathaniel Eldredge
\and 
Leonard Gross
\and
Laurent Saloff-Coste
}

\date{September 2, 2017}

\maketitle

\address{
N.~Eldredge: School of Mathematical Sciences, University of Northern Colorado, 501 20th
  St.~Box 122, Greeley, CO 80639 USA; \email{neldredge@unco.edu}
\and
L.~Gross: Department of Mathematics, Cornell University, 301 Malott
Hall, Ithaca, NY 14853 USA; \email{gross@math.cornell.edu}
\and
L.~Saloff-Coste: Department of Mathematics, Cornell University, 301 Malott
Hall, Ithaca, NY 14853 USA; \email{lsc@math.cornell.edu}
}

\subjclass{Primary 35R03, 35H20; Secondary 43A15, 32W30}

\begin{abstract}
  We show that for a hypoelliptic Dirichlet form operator $A$ on a
  stratified complex Lie group, if the logarithmic Sobolev inequality
  holds, then a holomorphic projection of $A$ is strongly
  hypercontractive in the sense of Janson.  This extends previous
  results of Gross to a setting in which the operator $A$ is not
  holomorphic. 
\keywords{stratified complex Lie group, hypoelliptic heat kernel,
  strong hypercontractivity, logarithmic Sobolev inequality,
  holomorphic $L^p$ space}
\end{abstract}

\tableofcontents

\section{Introduction}\label{sec:intro}

In \cite{driver-gross-hilbert-spaces-holomorphic, dgs1, dgs2, dgs3},
subsets of the current authors, together with Bruce K.~Driver, studied
properties of elliptic and hypoelliptic heat kernels on complex Lie
groups and homogeneous spaces, particularly the Taylor map for $L^2$
holomorphic functions.  Generally, it was shown that hypoelliptic heat
kernels and their sub-Laplacians often behave similarly to their
elliptic counterparts, such as the Gaussian heat kernel and standard
Laplacian on $\mathbb{C}^n$.  {\len In this paper we turn our
  attention to the phenomenon of strong hypercontractivity in the
  particular case of stratified complex Lie groups.}

To motivate this study, let us first consider Euclidean space
$\mathbb{R}^n$ equipped with standard Gaussian measure $\nu$.  Let
$Q(f,g)$ be the Dirichlet form with core $C^\infty_c(\mathbb{R}^n)$
defined by $Q(f,g) = \int_{\mathbb{R}^n} \nabla f \cdot \nabla
\bar{g}\,d{\len \nu}$, whose generator is the Ornstein--Uhlenbeck operator
$Af(x) = -\Delta f(x) + {\len x}\cdot \nabla f(x) $.  In
\cite{nelson-free-markov}, E.~Nelson discovered that
the semigroup $e^{-tA}$ enjoys the following property known as
\defword{hypercontractivity}:
\begin{theorem} \label{thmN}
  For $1 < q \le p < \infty$, let $t_N(p,q) = \frac{1}{2}
  \log\left(\frac{p-1}{q-1}\right)$.  {\len Then} for any $t \ge t_N$, $e^{-tA}$
  is a contraction from $L^q({\len \nu})$ to $L^p({\len \nu})$.  
\end{theorem}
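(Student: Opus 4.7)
The plan is to follow Gross's method of deriving hypercontractivity from a logarithmic Sobolev inequality. The essential analytic input will be the Gaussian LSI: for sufficiently smooth $g$,
\begin{equation*}
\int_{\mathbb{R}^n} g^2 \log(g^2)\, d\nu - \|g\|_2^2 \log \|g\|_2^2 \le 2 \int_{\mathbb{R}^n} |\nabla g|^2 \, d\nu.
\end{equation*}
Taking this as a black box, I would choose $p(t) = 1 + (q-1)e^{2t}$, so that $p(0) = q$ and $p(t_N) = p$, and aim to show that $F(t) := \|e^{-tA} f\|_{p(t)}$ is nonincreasing on $[0, t_N]$ for any nonnegative $f \in L^q(\nu)$; the case of general (complex) $f$ will then follow from the pointwise bound $|e^{-tA}f| \le e^{-tA}|f|$, which holds because the Ornstein--Uhlenbeck semigroup is positivity preserving.

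The core computation is the differentiation of $F(t)^{p(t)} = \int u(t)^{p(t)} \, d\nu$, where $u(t) = e^{-tA}f$. Using $\partial_t u = -Au$ and the Dirichlet form identity
\begin{equation*}
\int u^{p-1}(Au)\, d\nu = (p-1)\int u^{p-2}|\nabla u|^2 \, d\nu = \frac{4(p-1)}{p^2}\int |\nabla g|^2\, d\nu,
\end{equation*}
with $g := u^{p/2}$ so that $\|g\|_2^2 = F^p$, an elementary rearrangement should yield
\begin{equation*}
p F^{p-1} F'(t) = \frac{p'(t)}{p(t)}\left(\int g^2 \log(g^2)\, d\nu - \|g\|_2^2 \log \|g\|_2^2\right) - \frac{4(p-1)}{p}\int |\nabla g|^2\, d\nu.
\end{equation*}
Invoking the Gaussian LSI bounds the entropy in the parentheses by $2\int |\nabla g|^2\,d\nu$, so $F'(t)\le 0$ will hold as soon as $p'(t) \le 2(p(t)-1)$; the prescribed $p(t) = 1 + (q-1)e^{2t}$ realizes equality. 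Monotonicity of $F$ on $[0,t_N]$ then gives $\|e^{-t_NA}f\|_p \le \|f\|_q$.

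The main obstacle will be the technical justification: a priori $u(t)$ only lies in $L^q(\nu)$, so the pointwise differentiation and the integration by parts used above need not make sense for arbitrary $f$, and the integrand $u^{p-2}|\nabla u|^2$ may be singular where $u$ vanishes. The standard remedy is to approximate nonnegative $f$ by $f_\varepsilon = (f\wedge N)+\varepsilon$ for small $\varepsilon>0$ and large $N$; the regularizing effect of the semigroup yields $u_\varepsilon(t)$ that is smooth and bounded away from $0$ and $\infty$, so the computation is rigorous for $f_\varepsilon$, and one passes to the limit by monotone/dominated convergence together with density of bounded functions in $L^q(\nu)$. The Gaussian LSI itself can be proved, for instance, via Gross's original two-point inequality combined with a central limit argument.
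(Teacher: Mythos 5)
The paper offers no proof of Theorem \ref{thmN}: it is stated as background and attributed to Nelson \cite{nelson-free-markov}, so there is no internal argument to compare yours against. Your proposal is correct, and it is the standard derivation of hypercontractivity from the logarithmic Sobolev inequality --- that is, one direction of the equivalence recorded as Theorem \ref{thmequiv} and proved in \cite{gross75} --- rather than Nelson's original direct argument. The computation checks out: with $p(t)=1+(q-1)e^{2t}$ one has $p(0)=q$ and $p(t_N)=p$; your identity for $pF^{p-1}F'$ is exactly right; and the Gaussian LSI in the normalization you use (which is (\ref{LSI-intro}) multiplied by $2$) yields $F'\le 0$ precisely when $p'\le 2(p-1)$, with equality for your choice of $p(t)$. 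The reduction to nonnegative $f$ via positivity preservation of the Mehler kernel and the truncation $f_\varepsilon=(f\wedge N)+\varepsilon$ are the standard regularizations. Two small points to tidy: the statement concerns all $t\ge t_N$, so after the case $t=t_N$ you should note that $e^{-sA}$ is a contraction on $L^p(\nu)$ for every $s\ge 0$ (it is a symmetric Markovian semigroup), whence $\|e^{-tA}f\|_{L^p}\le\|e^{-t_NA}f\|_{L^p}\le\|f\|_{L^q}$; and the sharp Gaussian LSI you take as a black box is genuinely the crux of the matter, since it carries the sharp constant, so the argument is only as complete as your chosen proof of it (the two-point inequality plus central limit theorem route you mention does work). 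What your route buys over Nelson's is exactly what the rest of this paper exploits: it isolates the LSI as the sole analytic input and therefore transfers to settings, such as the stratified groups studied here, where no explicit kernel computation is available.
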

So the semigroup $e^{-tA}$ improves {\len local} integrability of functions with
respect to ${\len \nu}$; as soon as $t$ exceeds ``Nelson's time'' $t_N(p,q)$,
$e^{-tA}$ maps $L^q$ into $L^p$.  Moreover, Nelson's time is sharp:
for $t < t_N(p,q)$, $e^{-tA}$ is unbounded from $L^q$ to $L^p$.
{\len  For a short history of this theorem, see the survey \cite{gross-lsi-survey-2006}.}

Now replace ${\len \nu}$ with any smooth measure  ${\len \mu} $ on $\mathbb{R}^n$
{\len and redefine $Q$ and $A$ accordingly}.
In \cite{gross75}, the second author introduced the
notion of a \defword{logarithmic Sobolev inequality}, which (in its
simplest version) is said to
be satisfied by $\mu$ if 
\begin{equation}\label{LSI-intro}
\int_{\mathbb{R}^n} |f|^2\log|f|\,d\mu
      \le Q(f)
	    + \|f\|^2_{L^2(\mu)} \log\|f\|_{L^2(\mu)}
\end{equation}
{ for all $f$ in the domain of $Q$.}

(Actually, in this paper, we shall study a more general version of
(\ref{LSI-intro}) in which the coefficient of $Q(f)$ is a constant $c$
other than 1, and in which a term of the form $\beta \|f\|_{L^2}^2$
can be added to the right side. See (\ref{LSI}).  The general version can
also be used in the theorems in this introduction, making appropriate
changes to the constants, but for simplicity we omit the details here.)

It was shown in \cite{gross75} that in this case the logarithmic
Sobolev inequality (\ref{LSI-intro}) is essentially equivalent to
hypercontractivity:
\begin{theorem} \label{thmequiv}
  A smooth measure $\mu$ on $\mathbb{R}^n$ satisfies the logarithmic
  Sobolev inequality (\ref{LSI-intro}) if and only if the
  corresponding semigroup $e^{-tA}$ is hypercontractive $($with Nelson's
  time $t_N$$)$. 
\end{theorem}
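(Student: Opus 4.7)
The plan is to follow Gross's original 1975 argument, which shows that both implications come from differentiating a single quantity, namely $F(t) := \|e^{-tA} f\|_{L^{p(t)}(\mu)}$, along a carefully chosen increasing exponent $p(t)$. Concretely, given $1 < q \le p < \infty$, I would set $p(t) = 1 + (q-1)e^{2t}$, so that $p(0)=q$ and $p(t_N(p,q)) = p$, and moreover $p'(t) = 2(p(t)-1)$. By a standard reduction (replacing $f$ with $|f|+\varepsilon$, using $|e^{-tA}f| \le e^{-tA}|f|$ for a Markov semigroup, then letting $\varepsilon \downarrow 0$), I may restrict to $f$ smooth, strictly positive, and bounded, where $F$ is smooth in $t$ and all manipulations are justified by the hypoellipticity/regularity of the heat semigroup.

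The core computation is to expand $\frac{d}{dt}\log F(t)$ along the flow $u := e^{-tA}f$. Two terms arise: one from differentiating the exponent $p(t)$, producing $\frac{p'(t)}{p(t)^2}\|u\|_{p}^{-p}\int u^{p}\log u\,d\mu$ minus $\frac{p'(t)}{p(t)^2}\log\|u\|_{p}$; and one from the time derivative of $u$, producing $-\|u\|_p^{-p}\int u^{p-1} A u\,d\mu = -(p-1)\|u\|_p^{-p}\int u^{p-2}|\nabla u|^2\,d\mu$, which via the chain rule equals $-\frac{4(p-1)}{p^2}\|u\|_p^{-p}\,Q(u^{p/2})$. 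With the choice $p'(t) = 2(p(t)-1)$, both contributions carry the common prefactor $\frac{2(p-1)}{p^2}$, and the inequality $\frac{d}{dt}\log F(t) \le 0$ becomes exactly the LSI (\ref{LSI-intro}) applied to the test function $g := u^{p/2}$. Integrating this differential inequality from $0$ to $t_N(p,q)$ yields $\|e^{-t_N A}f\|_p \le \|f\|_q$, i.e.\ hypercontractivity.

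For the converse I run the same computation in reverse at the endpoint $t=0$. Assuming $\|e^{-tA}f\|_{p(t)} \le \|f\|_q$ for all $t\ge 0$ and for every $q \in (1,\infty)$ (I fix any such $q$, say $q=2$, which by $p(0)=q$ gives $\|f\|_q = F(0)$), the function $t \mapsto F(t)/\|f\|_q$ is $\le 1$ with value $1$ at $t=0$, so its right derivative at $0$ is $\le 0$. Applying the identity derived above at $t=0$ with $u=f$ and $p(0)=2$ yields precisely (\ref{LSI-intro}) for the smooth positive $f$, and a density argument on the domain of $Q$ extends it to all admissible $f$.

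The routine calculus is immediate; the step that demands care is justifying the interchange of $\frac{d}{dt}$ with the $L^{p(t)}$ integral and the integration by parts $\int u^{p-1}Au\,d\mu = (p-1)\int u^{p-2}|\nabla u|^2\,d\mu$, since the exponent is time-dependent and $A$ is only defined on a dense subdomain. The main obstacle is therefore the technical approximation: one must work on a core of nice functions where $u=e^{-tA}f$ stays smooth, bounded, and bounded away from $0$, control tail behavior so that the boundary terms in the integration by parts vanish, and then pass to the limit in both $\varepsilon$ (the lower bound on $f$) and in the density of the core in the form domain. Once this regularization is in place, the variational identity is an equality and the equivalence of LSI with hypercontractivity at Nelson's time follows by a single differentiation argument.
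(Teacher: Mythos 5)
Your proposal is correct and is exactly the classical semigroup-differentiation argument of \cite{gross75}: the paper itself does not prove Theorem \ref{thmequiv} but simply cites that reference, and your method coincides with the one the paper deploys (in adapted form, with $r(t)=qe^{2t/c}$, the regularization $\gamma_t=(|g_t|^2+\epsilon)^{1/2}$, and a polynomial core) to prove its own main Theorem \ref{strong-hypercontractive}. The only blemish is a harmless bookkeeping slip in the prefactors of the entropy terms (writing $\tfrac{p'}{p^2}$ where the normalized $\int u^p\log u$ term carries $\tfrac{p'}{p}$, equivalently $\tfrac{p'}{p^2}$ in front of $\int u^p\log u^p$); the cancellation at $p'=2(p-1)$ and the resulting differential inequality are nevertheless as you describe.
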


The early history of these two types of inequalities devolves
from two different backgrounds.
In 1959 A.~J.~Stam  \cite{stam59},   motivated by problems in information theory,
 proved an inequality, based on Lebesgue measure rather than Gauss measure,   
 easily transformable into the Gaussian special case of \eqref{LSI-intro}.  
 In 1966 E.~Nelson \cite{nelson66}, motivated by the problem of  semiboundedness of Hamiltonian operators 
in quantum field theory, proved the first version of the hypercontractivity   inequality of Theorem \ref{thmN}
with
dimension dependent bounds.
In order to encompass a larger  
class of Hamiltonians, J.~Glimm  \cite{glimm68} sharpened  Nelson's inequality in 1968 and removed
  the dimension dependence, thereby enabling   the inequality to work in infinite dimensions. 
Subsequently  Nelson  \cite{nelson-free-markov},  in 1973,  found the best hypercontractivity constants, which are those presented in Theorem \ref{thmN}.
 Pursuing a different track to semiboundedness of quantum field Hamiltonians,
 P.~Federbush \cite{federbush69},  showed in 1969 that  semiboundedness would follow from a
logarithmic Sobolev inequality  much more easily than from hypercontractivity. 
His semiboundedness theorem essentially asserts that a logarithmic Sobolev inequality implies semiboundedness.
In this paper he also gave a derivation 
of a  Gaussian  logarithmic Sobolev inequality using delicate Hermite function expansions in infinitely
 many variables. Although his version of a logarithmic Sobolev  inequality is not written down 
   in  this paper,
   it follows easily from 
   the identity  \cite[Equ.~(14)]{federbush69}, and  inequality \cite[Equ.~(21)]{federbush69}.
He thereby recovered  semiboundedness for the   class of Hamiltonians originally addressed by Nelson,
 though not the class encompassed by  Glimm's improvement. 
         Ironically, using the semiboundedness theorem of Federbush,  the sharp logarithmic Sobolev inequality of Stam would have 
 yielded semiboundedness of the large class addressed
  by  Glimm's improvement.
 But Stam's results were not known among this group of mathematical physicists 
till 1991, when Eric Carlen \cite{carlen91}, discovered Stam's paper and made  the connection
with the  Gaussian logarithmic Sobolev inequalities of the mathematical physics literature.
 In the meanwhile, the second author   \cite{gross75},  proved in 1975  that a family 
of hypercontractivity bounds,
  such as  those in Theorem \ref{thmN},  is  equivalent  to a logarithmic Sobolev inequality. 
 Best constants are preserved in this equivalence. Theorem \ref{thmequiv} is a typical case.
 He also
  proved the sharp form \eqref{LSI-intro} of the Gaussian logarithmic
  Sobolev inequality, which  Carlen later showed
  to be equivalent to the Euclidean form of Stam. 
  With the help of the equivalence theorem, one can understand better the relation between Stam's and Federbush's versions of the logarithmic Sobolev inequality: the former is equivalent to the strong form of Glimm,
   while the latter is equivalent  to the original form of Nelson.

Generalizations of the equivalence Theorem \ref{thmequiv} are now known to hold in a wide
variety of settings; see \cite{bakry-hypercontractivity-lnm,
  gross-lsi-survey-1992, gross-lsi-survey-2006} for surveys  and the recent exposition
  and historical background in  \cite{SimonPart3}.

Let us turn now to the complex setting; replace $\mathbb{R}^n$ by
$\mathbb{C}^n$ and {\len  suppose that} $\mu$ is standard Gaussian 
measure on  {\len $\mathbb{C}^n$}.
S.~Janson discovered in \cite{janson-hypercontractivity-1983} that if
one restricts the Ornstein--Uhlenbeck semigroup $e^{-tA}$ to the
holomorphic functions $\mathcal{H}$, then one obtains the property of
\defword{strong hypercontractivity}, in which the improvement in
integrability happens at earlier times:
\begin{theorem}
  For $0 < q \le p < \infty$, let $t_J(p,q) = \frac{1}{2}
  \log\left(\frac{p}{q}\right)$.  {\len Then, for}  any $t \ge t_J$, $e^{-tA}$
  is a contraction from $\mathcal{H} \cap L^q(\mu)$ to $\mathcal{H}
  \cap L^p(\mu)$.  
\end{theorem}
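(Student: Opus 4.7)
The plan is to derive strong hypercontractivity from the Gaussian logarithmic Sobolev inequality via a Gross-type monotonicity argument, enhanced by an integration-by-parts identity that uses the Cauchy--Riemann equations to sharpen Nelson's time to Janson's. The starting observation is that $e^{-tA}$ preserves $\mathcal{H}$ and acts on it by scaling: computing on monomials, the Ornstein--Uhlenbeck generator $A = -\Delta + x \cdot \nabla$ restricts to the Euler operator $\sum_j z_j \partial_{z_j}$ on holomorphic polynomials, so $(e^{-tA}f)(z) = f(e^{-t}z)$ for holomorphic $f$. By the semigroup property together with the $L^p$-contractivity of $e^{-tA}$, it suffices to treat the boundary time $t = t_J(p,q)$, where $\rho := e^{-t}$ satisfies $\rho^2 = q/p$.

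Following Gross, set $p(t) = q e^{2t}$, $f_t = e^{-tA}f$, and $F(t) = \int |f_t|^{p(t)}\, d\mu$. Differentiating,
\[
F'(t) = p'(t) \int |f_t|^{p(t)} \log |f_t|\, d\mu \;-\; p(t)\, \Re\!\int |f_t|^{p(t)-2}\, \overline{f_t}\, A f_t\, d\mu,
\]
and the desired monotonicity $\|f_t\|_{p(t)} \le \|f\|_q$ is equivalent to $F'(t) \le (p'(t)/p(t))\, F(t) \log F(t)$. A routine integration by parts in the Dirichlet form gives
\[
\Re\!\int |g|^{p-2} \bar g\, Ag\, d\mu \;=\; \int |g|^{p-2} \bigl[(p-2)|\nabla |g||^2 + |\nabla g|^2\bigr] d\mu.
\]

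The crucial and most delicate step is to exploit holomorphicity here. For holomorphic $g$ the Cauchy--Riemann equations $\partial_{\bar z_j} g = 0$ force $|\nabla_{\mathbb{R}} g|^2 = 2 \sum_j |\partial_{z_j} g|^2$ and $|\nabla |g||^2 = \tfrac{1}{2}|\nabla_{\mathbb{R}} g|^2$, so the bracket collapses to $\tfrac{p}{2}|\nabla g|^2 = p\,|\nabla |g||^2 = \tfrac{4}{p}|\nabla (|g|^{p/2})|^2$, yielding
\[
\Re\!\int |g|^{p-2} \bar g\, Ag\, d\mu \;=\; \frac{4}{p} \int |\nabla (|g|^{p/2})|^2\, d\mu.
\]
This is a factor $p/(p-1)$ stronger than the estimate available for general $L^2$ functions and is precisely what replaces Nelson's time by Janson's. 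Applying the Gaussian LSI (\ref{LSI-intro}) to $h = |f_t|^{p/2}$ bounds $\int |f_t|^p \log |f_t|\, d\mu$ by $(2/p)\|\nabla h\|_2^2 + (1/p) F\log F$; substituting into the expression for $F'(t)$, the coefficient of $\|\nabla h\|_2^2$ becomes $2p'(t)/p(t) - 4$, which vanishes exactly when $p(t) = q e^{2t}$, i.e.\ at Janson's time. The remaining inequality $F'(t) \le (p'/p) F\log F$ is the required monotonicity. The main technical obstacle is justifying the integration by parts on the zero set of $g$ and approximating a general $f \in \mathcal{H}\cap L^q(\mu)$ by smooth holomorphic functions in a way that makes the flow $f_t$ well-defined; this is routine in the classical setting but requires care when $q < 2$, since $f$ need not lie in $L^2(\mu)$ a priori.
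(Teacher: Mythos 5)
Your argument is correct and is essentially the proof the paper gives (in the more general stratified setting) for its main Theorem \ref{strong-hypercontractive}: the Gross differentiation scheme with $p(t)=qe^{2t}$, sharpened by the observation that for holomorphic $g$ the Dirichlet-form term equals $\tfrac{4}{p}\int|\nabla(|g|^{p/2})|^2\,d\mu$ with no $(p-1)$ loss, which is exactly what replaces Nelson's time by Janson's. The two technical obstacles you flag at the end are resolved in the paper by regularizing $|g|$ to $\gamma=(|g|^{2}+\epsilon)^{1/2}$ before integrating by parts, and by first proving the inequality on the dense class of holomorphic polynomials and passing to the limit using the already-established $L^p$-contractivity.
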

 {\len Several other proofs of this theorem} 
 followed \cite{carlen-integral-identities,
  zhou-contractivity-1991, janson-complex-hypercontractivity-1997}.
Note that ``Janson's time'' $t_J(p,q)$ is less than Nelson's time
$t_N(p,q)$ whenever $1 < q < p < \infty$. {\len  Moreover} Janson's strong
hypercontractivity also has content for $0 < q \le p \le 1$.  Very
roughly, the reason for this is that holomorphic functions are
harmonic, and so
 the second-order differential operator $A$,
  {\len when restricted to $\mathcal{H}$,} 
 reduces to the first-order operator $A f(z)
= {\len z\cdot}\nabla f(z) $.  Thus it is not surprising that its behavior
should be improved in this case.  We note for later reference that
in this case $A$ is the holomorphic vector {\len field} which {\len generates}
  the flow of the
dilations $\varphi_t(z) = tz$, meaning that the semigroup $e^{-tA}$ is
simply $e^{-tA} f(z) = f(e^{-t} z)$.

In the paper \cite{gross-hypercontractivity-complex}, the second author studied
{\len such Dirichlet form operators over}  
a complex Riemannian manifold $(M,g)$ equipped with a
smooth measure $\mu$, seeking to relate the logarithmic Sobolev
inequality to strong hypercontractivity {\len in a general holomorphic context}.  
The result was that the
former implies the latter, under fairly mild assumptions.  In this
result, the spaces $\mathcal{H} \cap L^p(\mu)$ must be replaced with
possibly smaller spaces denoted $\mathcal{H} L^p(\mu)$; see Remark
\ref{HLp-remark} below for the definitions used in
\cite{gross-hypercontractivity-complex}, and see
\cite{gross-hypercontractivity-complex} for a complete discussion of
the issues involved.
{\len  As in the Euclidean case}, the
Dirichlet form operator $A$ is given by the Laplacian {\len over}  $M$ plus a
complex vector field $Z$, so {\len that} on holomorphic functions {\len one has} 
$Af=Zf$.  If $Z$
is a holomorphic vector field, or equivalently, if the operator $A$
maps $\mathcal{H}$ into $\mathcal{H}$, we will say that $A$ is
\defword{holomorphic}. Let $Y = i(Z - \bar{Z})$ be the imaginary part
of $Z$.
\begin{theorem}\cite[Theorem 2.19]{gross-hypercontractivity-complex}\label{shc-holomorphic-case}
  Suppose that the operator $A$ is holomorphic and that the real vector
  field $Y$ is Killing.  If the logarithmic Sobolev inequality
  (\ref{LSI-intro}) holds, then for any $t \ge t_J(p,q)$, $e^{-tA}$ is
  a contraction from $\mathcal{H} L^q(\mu)$ to $\mathcal{H}
  L^p(\mu)$.  
\end{theorem}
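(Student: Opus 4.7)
The plan is to adapt L.~Gross's classical time-varying $L^p$-norm technique for deriving hypercontractivity from LSI. The decisive simplification afforded by the holomorphic hypothesis is that $A|_{\mathcal{H}} = Z$ is first order, which permits the faster schedule $p(t) = q e^{2t}$ (so that $p(0) = q$ and $p(t_J(p,q)) = p$) in place of Nelson's $p(t) = 1 + (q-1)e^{2t}$.

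Fix $f \in \mathcal{H} L^q(\mu)$ and set $u_t = e^{-tA} f$, so that $u_t \in \mathcal{H}$ and $\partial_t u_t = -Z u_t$. Put $M(t) := \|u_t\|_{p(t)}^{p(t)}$. Because $Z$ is of type $(1,0)$, for holomorphic $u$ one has $Z \bar u = 0$ and $\bar Z u = 0$, which yields the pointwise identity
\[
\operatorname{Re}(\bar u \cdot Z u) \;=\; \tfrac12 \bigl( Z|u|^2 + \bar Z |u|^2\bigr) \;=\; \tfrac12 X|u|^2, \qquad X := Z + \bar Z.
\]
Substituting this in the usual derivative formula for a varying $L^p$-norm, the second-order Dirichlet term present in the standard argument disappears completely and one obtains
\[
M'(t) \;=\; \dot p(t)\int |u_t|^{p(t)} \log|u_t|\,d\mu \;-\; \int X\bigl(|u_t|^{p(t)}\bigr)\,d\mu.
\]

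Now apply the hypothesized LSI~(\ref{LSI-intro}) to $v = |u_t|^{p(t)/2}$. The Cauchy--Riemann equations give, for any holomorphic $u$, the identity $|\nabla|u|^{p/2}|^2 = (p^2/8)|u|^{p-2}|\nabla u|^2$, so LSI reduces to
\[
\int |u|^p \log|u|\,d\mu \;-\; \|u\|_p^p \log\|u\|_p \;\le\; \tfrac{p}{4}\int |u|^{p-2}|\nabla u|^2\,d\mu.
\]
Combining this with the expression for $M'(t)$ and using $\dot p = 2p$, the monotonicity $\tfrac{d}{dt} \log \|u_t\|_{p(t)} \le 0$ is equivalent to the ``key inequality''
\[
\int X\bigl(|u|^p\bigr)\,d\mu \;\ge\; \tfrac{p^2}{2} \int |u|^{p-2} |\nabla u|^2\,d\mu \qquad (u \in \mathcal{H}).
\]

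The main obstacle is precisely this last inequality, and it is here that the Killing hypothesis on $Y = i(Z - \bar Z)$ must intervene. On the K\"ahler manifold one has $Y = JX$; the Killing property of $Y$, combined with the parallelism of $J$ and the $\mu$-symmetry of $A$ (which should force the flow of $Y$ to preserve $\mu$), ought to produce an integration-by-parts identity that converts the drift integral $\int X(|u|^p)\,d\mu$ into exactly $\tfrac{p^2}{2}\int|u|^{p-2}|\nabla u|^2\,d\mu$, plus a non-negative remainder. I expect the technical heart of the argument to be a Bochner-type pointwise identity tailored to the holomorphic vector field $Z$ and the K\"ahler metric, producing the correct constant so that the comparison with LSI is sharp. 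Once the key inequality is in hand, integrating $\tfrac{d}{dt}\log\|u_t\|_{p(t)}\le 0$ on $[0, t_J(p,q)]$ delivers the claimed contraction $\|e^{-tA} f\|_p \le \|f\|_q$.
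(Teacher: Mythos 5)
Your scaffolding is correct and is essentially the method this paper uses for its own analogue, Theorem \ref{strong-hypercontractive}: the accelerated exponent schedule $p(t)=qe^{2t}$, the observation that for holomorphic $u$ one has $\operatorname{Re}(\bar u\,Zu)=\tfrac12 X|u|^2$ so the drift term becomes $\int X(|u|^p)\,d\mu$, the application of (\ref{LSI-intro}) to $v=|u|^{p/2}$ with $|\nabla |u|^{p/2}|^2=(p^2/8)|u|^{p-2}|\nabla u|^2$, and the reduction of $\tfrac{d}{dt}\log\|u_t\|_{p(t)}\le 0$ to the single ``key inequality'' $\int X(|u|^p)\,d\mu\ge \tfrac{p^2}{2}\int|u|^{p-2}|\nabla u|^2\,d\mu=4Q(|u|^{p/2})$. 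All of that checks out and correctly locates where the Killing hypothesis must enter.

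The genuine gap is that you never prove the key inequality; you only assert that the Killing property of $Y$ ``ought to'' yield it via an unspecified Bochner-type identity. That inequality \emph{is} the theorem --- everything before it is standard Gross machinery --- so as written this is a correct reduction, not a proof. To see what is actually needed, look at how Section \ref{sec:strong} closes the analogous step: for holomorphic $g$ there is the explicit pointwise identity (\ref{calculus-claim}), $\Delta[\gamma^r]=4|\nabla \gamma^{r/2}|^2+r\epsilon\,\gamma^{r-4}|\nabla g|^2$ with $\gamma=(|g|^2+\epsilon)^{1/2}$, whose remainder is visibly nonnegative, and Lemma \ref{ds-Delta-X} converts $\int X h\,\rho_a\,dm$ into a constant times $\int \Delta h\,\rho_a\,dm$, giving $I\ge 4Q(\gamma^{r/2})$ with exactly the constant that matches the LSI. (In the manifold setting of the quoted theorem, the role of Lemma \ref{ds-Delta-X} is played by the $\mu$-invariance of the flow of the Killing field $Y$, which is what permits the integration by parts of the first-order drift; this is the content you would have to supply.) Two secondary issues you should also not wave away: your manipulations with $|u|^{p-2}$ are purely formal at the zeros of $u$ --- and the theorem covers $0<q\le p<2$, even $p<1$ --- which is precisely why the paper regularizes with $\gamma=(|g|^2+\epsilon)^{1/2}$; and the LSI requires $|u|^{p/2}\in\mathcal{D}(Q)$ and the differentiations under the integral sign require justification, which the paper obtains by first restricting to a dense invariant class (holomorphic polynomials) and passing to general $f\in\mathcal{H}L^q(\mu)$ by density only at the end. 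Finally, note that Theorem \ref{shc-holomorphic-case} itself is quoted here from \cite{gross-hypercontractivity-complex} rather than proved; the comparison above is with the paper's proof of its main theorem, which is the in-paper realization of the same argument.
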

A second proof was given in \cite{gross-strong-hypercontractivity},
which also allows for certain other types of boundary conditions in
the case that $(M,g)$ is incomplete.

{\nate The present paper is an extension of the results of
\cite{gross-hypercontractivity-complex,
  gross-strong-hypercontractivity}.  As noted, a key assumption of
those papers was that $A$ should be holomorphic.  This assumption is
in some sense natural, since it allows one to work entirely within the
holomorphic category; and it is satisfied by many interesting
examples.  But there are also many apparently innocuous settings in
which $A$ is not holomorphic.  See \cite{gross-qian,
  gross-hypercontractivity-complex, gross-strong-hypercontractivity}
and references therein for examples, counterexamples, and necessary
and sufficient conditions; the same condition is studied, in other
contexts, in \cite{calabi-extremal-kahler, futaki-kahler-einstein}.

To the best of our knowledge, until now, there have been no strong
hypercontractivity results akin to Theorem \ref{shc-holomorphic-case}
that apply in the case where $A$ is not holomorphic.  As such, our
goal here is to begin attacking this case by studying a particular
class of examples in which $A$ is not holomorphic, yet a strong
hypercontractivity theorem can still be proved. }

One possible way to approach
the case where $A$ is not holomorphic is, as suggested in
\cite[Section 7]{gross-strong-hypercontractivity}, to replace $A$ by
$B = P_{\mathcal{H}} A$, its $L^2$ orthogonal projection onto the
holomorphic functions $\mathcal{H}$.  Unfortunately, this does not
always work, and \cite{gross-strong-hypercontractivity} gives an
example of a complex manifold (a cylinder) for which $e^{-tB}$ is not
strongly hypercontractive, and is not even contractive on $L^p(\mu)$
for small $p<1$.

In the present paper, we examine a class of spaces in which the
operator $A$ is not holomorphic, and yet we are able to show that
$e^{-tB}$ is strongly hypercontractive, where $B$ is (at least on a
large class of functions) the holomorphic projection of $A$.  We work
in the setting of complex stratified Lie groups, where we replace the
Laplacian $\Delta$ by the hypoelliptic sub-Laplacian, and take as our
measure the corresponding hypoelliptic heat kernel.  A key observation
is that stratified Lie groups have a canonical dilation structure, and
it turns out that, as in the case of the Gaussian measure on
$\mathbb{C}^n$, the operator $B$ is essentially the holomorphic vector
field generated by dilations.

The paper is structured as follows.
\begin{itemize} \item  In Section \ref{sec:groups} we
introduce notation and review important properties of stratified
complex Lie groups $G$, their sub-Riemannian geometry, and the hypoelliptic
heat kernel $\rho_a$.  We also begin a discussion of holomorphic polynomials
on $G$. 
\item  Section \ref{sec:forms} defines the Dirichlet form $Q$ and
the operators $A,B$.  
\item In {\len  Section} \ref{sec:poly}, we study the density
properties of holomorphic polynomials, including an
orthogonal decomposition of holomorphic functions {\len  in $L^2(\rho_a)$} 
into homogeneous polynomials,
and obtain some additional properties of $A,B$ and their domains.
Section \ref{sec:poly} also defines the function spaces $\mathcal{H}
L^p(\rho_a)$ on which we work, and discusses related subtleties.
\item In Section \ref{sec:B}, we show that the operator $B$ is (up to
scaling and domain issues) identical to the holomorphic vector field
generated by dilations; we take advantage of this to show that
(except in trivial cases) the operator $A$ is not holomorphic.  
\item We
then proceed to show in Section \ref{sec:contractivity} that the
semigroup $e^{-tB}$ is a contraction on $L^p(\rho_a)$ for $0 < p <
\infty$; this is the special case of strong hypercontractivity with
$q=p$.  
\item Section \ref{sec:strong} contains the proof of our main
theorem, showing that if the logarithmic Sobolev inequality holds then
the semigroup $e^{-tB}$ is strongly hypercontractive.  
\item In Section
\ref{sec:heis} we specifically consider the complex Heisenberg group,
for which the logarithmic Sobolev inequality does indeed hold.
\end{itemize}

\section{Stratified complex groups}\label{sec:groups}

\subsection{Definitions}
In this section, we recall the definition of a stratified complex Lie
group (respectively, algebra) and its basic properties.  A
comprehensive reference on stratified Lie groups is \cite{blu-book}.

\begin{definition}
  Let $\mathfrak{g}$ be a finite-dimensional complex Lie algebra.  We
  say $\mathfrak{g}$ is \defword{stratified} of step $m$ if it admits a
  direct sum decomposition
  \begin{equation} \label{e.d2.1}
    \mathfrak{g} = \bigoplus_{j=1}^m V_j
  \end{equation}
  {\len for which}
  \begin{equation*}
    [V_1, V_j] = V_{j+1}, \qquad [V_1, V_m] = 0
  \end{equation*}
  and $V_m \ne 0$.
  A complex Lie group $G$ is \defword{stratified} if it is connected
  and simply connected and its Lie algebra $\mathfrak{g}$ is stratified.
\end{definition}

Using the Jacobi identity, it is easy to show that in a stratified Lie
algebra, we have $[V_k, V_j] \subset V_{j+k}$, where we take $V_{j+k}
= 0$ for $j+k > m$.  (Proceed by induction on $k$).  In particular,
$\mathfrak{g}$ is nilpotent of step { $m$}.  As such, the exponential
map $\exp : \mathfrak{g} \to G$ is a diffeomorphism, so we may as well
take $G=\mathfrak{g}$ as sets and let the exponential map be the
identity.  The group operation on $G$ can then be written down
explicitly using the Baker--Campbell--Hausdorff formula.  We note that
in $G$, the identity element $e$ is $0$, and the group inverse is
given by $g^{-1} = -g$.  We shall use $L_x : G \to G$ to denote the
left translation map $L_x(y)= x \cdot y$.  We identify $\mathfrak{g}$
with the tangent space $T_e G$, and for $\xi \in \mathfrak{g}$,
$\widetilde{\xi}$ is the left-invariant vector field on $G$ with
$\widetilde{\xi}(e) = \xi$.

{ Since $\mathfrak{g}$ is a finite-dimensional vector space, it carries
a translation-invariant Lebesgue measure, which is unique up to
scaling. We fix one such measure and denote it by $m$; integrals with
respect to $dx, dy$, etc., will also be understood to refer to this
measure.  Then $m$ is also a measure on $G$.  It is easy to verify
that $m$ is bi-invariant under the group operation on $G$, so $m$ is
(again up to scaling) the Haar measure on $G$.}

\begin{notation}\label{n.d2.16}
  We define \textbf{convolution} on $G$ by
\begin{equation}
(f\ast g)(x)=\int_{G}f(xy^{-1})g(y)dy=\int_{G}f(z)g(z^{-1}x)dz
\end{equation}
when the Lebesgue integral exists.
\end{notation}

Our motivating examples are the complex Heisenberg and
Heisenberg--Weyl groups.

\begin{example}\label{ex-heis}
  The \defword{complex Heisenberg Lie algebra} is the complex Lie
  algebra $\mathfrak{h}_3^{\mathbb{C}}$ given by $\mathbb{C}^3$ with
  the bracket
  defined by
  \begin{equation}\label{heis-bracket}
    [(z_1, z_2, z_3), (z_1', z_2', z_3')] = (0, 0, z_1 z_2' - z_1' z_2).
  \end{equation}
  Taking $V_1 = \{(z_1, z_2, 0) : z_1, z_2 \in \mathbb{C}\}$ and $V_2
  = \{(0,0,z_3) : z_3 \in \mathbb{C} \}$, it is clear that
  $\mathfrak{h}_3^{\mathbb{C}}$ is stratified of step 2.  The
  \defword{complex Heisenberg group} $\mathbb{H}_3^{\mathbb{C}}$ is
  then $\mathbb{C}^3$ with the group operation $g \cdot h =
  g+h+\frac{1}{2}[g,h]$, which we may write in coordinates as
  \begin{equation*}
    (z_1, z_2, z_3) \cdot (z_1', z_2', z_3') = (z_1+z_1', z_2+z_2',
z_3+z_3'+\frac{1}{2}(z_1 z_2' - z_2 z_1')).
  \end{equation*}
\end{example}

Some readers may be used to seeing the Heisenberg group as the group
of upper triangular matrices with $1$s on the diagonal.  Let us note that
by mapping the element $(z_1, z_2, z_3) \in
\mathbb{H}_3^{\mathbb{C}}$ to the matrix
\begin{equation*}
  \begin{pmatrix}
    1 & z_1 & z_3 + \frac{1}{2} z_1 { z_2} \\ 0 & 1 & z_2 \\ 0 & 0 & 1
  \end{pmatrix}
\end{equation*}
we have an embedding of the Lie group $\mathbb{H}_3^{\mathbb{C}}$ into
the Lie group $GL(\mathbb{C}, 3)$ of invertible $3 \times 3$ complex
matrices, whose image is precisely the upper triangular matrices with
$1$s on the diagonal.  So this realization of the complex Heisenberg
group is isomorphic to ours.  (Note that the slightly strange-looking
upper right entry of the matrix above is chosen so that this map is a
group homomorphism.)

\begin{example}\label{ex-heis-weyl}
  Generalizing the previous example, the \defword{complex
    Heisenberg--Weyl Lie algebra} of dimension $2n+1$ is the complex Lie
  algebra $\mathfrak{h}_{2n+1}^{\mathbb{C}}$ given by $\mathbb{C}^{2n+1}$ with
  the bracket
  defined by
  \begin{equation}
    [(z_1, \dots, z_{2n+1}), (z_1', \dots, z_{2n+1}')] = \left(0, \dots, 0,
    \sum_{k=1}^n z_{2k-1} z_{2k}' - z_{2k-1}' z_{2k}\right).
  \end{equation}
  This again is stratified of step 2, taking $V_1 = \{(z_1, \dots,
  z_{2n}, 0) : z_1, \dots, z_{2n} \in \mathbb{C}\}$ and $V_2
  = \{(0,\dots, 0 ,z_{2n+1}) : z_{2n+1} \in \mathbb{C} \}$.  The
  \defword{complex Heisenberg--Weyl group}
  $\mathbb{H}_{2n+1}^{\mathbb{C}}$ is again $\mathbb{C}^{2n+1}$ with the
  group operation $g \cdot h = g+h+\frac{1}{2}[g,h]$.  
\end{example}

\subsection{The dilation semigroup}

\begin{definition}
  For $\lambda \in \mathbb{C}$, the \defword{dilation map} on
  $\mathfrak{g}$ or $G$ is defined by
  \begin{equation}
    \delta_{\lambda}(v_{1}+\cdots+v_{m})=\sum_{k=1}^{m}\lambda^{k}v_{k}\qquad
    v_{j}\in V_{j},\quad j=1,\ldots,m. \label{e.d2.4}
  \end{equation}
\end{definition}

It is straightforward to verify that for $\lambda \ne 0$,
$\delta_\lambda$ is an algebra automorphism of $\mathfrak{g}$ and a
group automorphism of $G$, and that
\begin{equation}
  \delta_{\lambda\mu}=\delta_{\lambda} \circ \delta_{\mu}\qquad\lambda,\mu
  \in\mathbb{C.} \label{e.d2.5}%
\end{equation}
Moreover, $\delta_\lambda$ is linear, so the derivative at the
identity of $\delta_\lambda : G \to G$ is $(\delta_\lambda)_* =
\delta_\lambda : \mathfrak{g} \to \mathfrak{g}$.

We note that $\delta_{\lambda}$ scales the Lebesgue measure $m$ by
\begin{equation}\label{m-dilate}
m(\delta_\lambda(A)) = |\lambda|^{2D} m(A),
\end{equation} 
where 
{\len $D : = \sum_{j=1}^m j \dim_{\mathbb{C}} V_j$} is the homogeneous dimension of $G$.  Thus
for an integrable function $f$, we have
\begin{equation}\label{m-dilate-integrate}
  \int_G f \circ \delta_\lambda\,dm = |\lambda|^{-2D} \int_G f\,dm.
\end{equation}

We can then consider the vector fields generating this semigroup.

\begin{definition}
  We define the real vector fields $X,Y$ on $G$ as
  \begin{align}
    (Xf)(z) &= \frac{d}{ds}\Big|_{s=0}f(\delta_{e^{s}}z)\qquad f\in C^{\infty}(G),
    \label{e.d2.6} \\
    (Yf)(z) &= \frac{d}{d\theta}\Big|_{\theta=0}f(\delta_{e^{i\theta}}z)\qquad f\in
    C^{\infty}(G) \label{e.d2.7}
  \end{align}
  and the complex vector field $Z$ by
  \begin{equation}
    Z=\frac{1}{2}(X-iY). \label{e.d2.8}
  \end{equation}
\end{definition}

\begin{remark}\label{i-remark}
  To {\len  remind the reader of standard conventions,}
  we note that the $i$ appearing in
  (\ref{e.d2.8}) does not denote the complex structure on
  $\mathfrak{g}$, but rather ordinary scalar multiplication for
  complex vector fields.  Formally, $Z$ is a smooth section of the
  complexified tangent bundle $TG \otimes_{\mathbb{R}} \mathbb{C}$,
  which has a natural complex vector space structure with scalar
  multiplication { $\zeta \cdot (v_x \otimes \eta) = v_x
    \otimes (\zeta \eta)$},
  and in which $TG$ embeds naturally via $v_x \mapsto v_x
  \otimes 1$.  
\end{remark}

\begin{lemma}
\label{l.d2.5} $Z$ is a holomorphic vector field of type $(1,0)$.
\end{lemma}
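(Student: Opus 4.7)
The plan is to exploit the fact that the dilation map is jointly holomorphic in $(\lambda, z) \in \mathbb{C} \times G$. Since each $V_j$ is a complex subspace of $\mathfrak{g}=G$ and $\delta_\lambda$ acts on $V_j$ by $\mathbb{C}$-linear scalar multiplication by $\lambda^j$, the map $\delta_\lambda : G \to G$ is $\mathbb{C}$-linear for each $\lambda$, and $(\lambda,z) \mapsto \delta_\lambda z$ is polynomial (hence holomorphic) in both variables.

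First I would show that $Z$ is of type $(1,0)$. Fixing $z \in G$, define $\psi(\lambda) = \delta_\lambda z$, which is a holomorphic map from $\mathbb{C}$ to $G$. With $\lambda = e^{s+i\theta}$, the Cauchy--Riemann condition $d\psi \circ J_{\mathbb{C}} = J \circ d\psi$, evaluated at $\lambda = 1$, reads $Y_z = J X_z$, where $J$ denotes the complex structure on $G$. Consequently
\[ Z = \tfrac{1}{2}(X - iY) = \tfrac{1}{2}(X - iJX), \]
which is precisely the canonical $(1,0)$-projection of the real vector field $X$ under the decomposition $TG \otimes_{\mathbb{R}} \mathbb{C} = T^{1,0}G \oplus T^{0,1}G$. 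Hence $Z$ is a section of $T^{1,0} G$.

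To verify that $Z$ is moreover a \emph{holomorphic} vector field, I would compute its coefficients in a global holomorphic chart on $G = \mathfrak{g}$. Choose a complex basis $\{e_\alpha\}$ of $\mathfrak{g}$ adapted to the stratification, with each $e_\alpha \in V_{j(\alpha)}$, and let $z_\alpha$ denote the induced complex-linear coordinates. Then the $\alpha$-th coordinate of $\delta_{e^s} z$ is $e^{s\,j(\alpha)} z_\alpha$, so for any holomorphic $f$ on an open subset of $G$,
\[ (Zf)(z) \;=\; \frac{d}{ds}\bigg|_{s=0} f(\delta_{e^s} z) \;=\; \sum_\alpha j(\alpha) \, z_\alpha \, \frac{\partial f}{\partial z_\alpha}(z), \]
using that $\bar{Z}$ annihilates holomorphic functions. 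Thus $Z = \sum_\alpha j(\alpha)\, z_\alpha\, \partial/\partial z_\alpha$ has holomorphic (in fact polynomial) coefficients, establishing the lemma.

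No step is genuinely difficult; the only point requiring real care is the one flagged in Remark \ref{i-remark}, namely that the $i$ appearing in $Z = \frac{1}{2}(X-iY)$ is scalar multiplication on $TG \otimes_{\mathbb{R}} \mathbb{C}$, \emph{a priori} distinct from the complex structure $J$ of $G$. The Cauchy--Riemann step above is precisely what reconciles the two, by proving $JX = Y$; once this is in hand, the coordinate computation on the second page proceeds without ambiguity.
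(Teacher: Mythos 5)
Your proof is correct and takes essentially the same route as the paper: both come down to the computation in adapted coordinates showing $Z=\sum_j c_j z_j\,\partial/\partial z_j$, whose coefficients are visibly holomorphic and which involves no $\partial/\partial\bar z_j$. The only difference is organizational --- you obtain the cancellation of the antiholomorphic derivatives abstractly via the Cauchy--Riemann relation $Y=JX$ for the holomorphic map $\lambda\mapsto\delta_\lambda z$, whereas the paper gets the same cancellation by writing out $Xf$ and $Yf$ explicitly in terms of $\partial/\partial z_j$ and $\partial/\partial\bar z_j$.
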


\begin{proof}
Let $z_{1},\ldots,z_{N}$ be complex coordinates on $G\equiv\mathfrak{g}$
relative to a basis of $\mathfrak{g}$ adapted to the decomposition in
(\ref{e.d2.1}). Then $\delta_{\lambda}z=(\ldots,\lambda^{c_{j}}z_{j},\ldots)$ for
positive integers $c_{1},\ldots,c_{N}$. Hence for any function $f\in
C^{\infty}(G)$ we have
\[
(Xf)(z)=\sum_{j=1}^{N}\left\{  c_{j}z_{j}\frac{\partial f}{\partial z_{j}%
}+c_{j}\overline{z}_{j}\frac{\partial f}{\partial\overline{z}_{j}}\right\}
\]
and
\[
(Yf)(z)=\sum_{j=1}^{N}\left\{ ic_{j}z_{j}\frac{\partial f}{\partial z_{j}}%
-ic_{j}\overline{z}_{j}\frac{\partial f}{\partial\overline{z}_{j}} \right\}.
\]
Thus
\begin{equation}
(Zf)(z)=\sum_{j=1}^{N}c_{j}z_{j}\frac{\partial f}{\partial z_{j}}.
\label{e.d2.9}%
\end{equation}
\end{proof}

\subsection{Holomorphic polynomials and Taylor series}

\begin{notation}
  $\mathcal{H}$ denotes the vector space of holomorphic functions on $G$.
\end{notation}

The dilations $\delta_{\lambda}$ on $G$ lead naturally to a notion of
homogeneous functions and polynomials on $G$.  These functions were
used extensively in \cite{folland-stein}, in the context of real
homogeneous groups.  For us, they will be used as a convenient class
of holomorphic test functions.  In this section, we define these
functions and verify a few key properties that will be important in
this paper.

\begin{definition}
\label{d.d2.6}  Let $k$ be a nonnegative integer. A function $f:G\rightarrow
\mathbb{C}$ is \defword{homogeneous} of degree $k$ if
\begin{equation}
f(\delta_{\lambda}z)=\lambda^{k}f(z)\ \text{for all}\ z\in G\ \text{and}%
\ 0\neq\lambda\in\mathbb{C}. \label{e.d2.10}%
\end{equation}
\end{definition}

\begin{example}
\label{ex.d2.7}If $G$ is the complex Heisenberg group with complex coordinates
$z_{1}, z_{2}, z_{3}$ then $z_{1}^{2}, z_{1}z_{2}, z_{2}^{2}, z_{3}$ are all homogeneous of degree $2$.
\end{example}

Note that if $f$ is homogeneous of degree $k$ then (\ref{e.d2.10}) and
(\ref{e.d2.6}), (\ref{e.d2.7}), (\ref{e.d2.8}) give
\begin{align}
Xf(z)&=kf(z),                                              \label{e.d2.11} \\
(Yf)(z)&=ikf(z)                                            \label{e.d2.12} \\
\intertext{and}
(Zf)(z)&=kf(z).                                             \label{e.d2.13}%
\end{align}

\begin{notation} \label{n.d2.8}
  For $k=0,1,2,\ldots$ we will denote by $\mathcal{P}_{k}$
  the set of all holomorphic  functions on $G$ which are
  homogeneous of degree $k$.
\end{notation}

\begin{lemma} \label{l.d2.9}
  Every holomorphic function $f \in \mathcal{H}$ has a unique decomposition
  of the form
  \begin{equation}
    f(z) = \sum^{\infty}_{k=0} f_{k}, \qquad f_{k} \in\mathcal{P}_{k} \label{e.d2.14}%
  \end{equation}
  in the sense of pointwise convergence.
\end{lemma}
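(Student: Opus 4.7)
My plan is to identify $G$ with $\mathbb{C}^{N}$ via the stratification-adapted coordinates $z_{1},\ldots,z_{N}$ introduced in the proof of Lemma \ref{l.d2.5}, and then reduce the claim to a regrouping of the ordinary Taylor expansion of an entire function, sorted by the weighted degree that comes from the dilation structure. Since $\mathfrak{g}$ is a nilpotent complex Lie algebra, the exponential map is a biholomorphism under $G\equiv\mathfrak{g}$, so a function $f\in\mathcal{H}$ is entire in the ordinary sense and admits a globally, absolutely convergent Taylor series
$$f(z) = \sum_{\alpha\in\mathbb{N}^{N}} a_{\alpha} z^{\alpha}, \qquad z\in\mathbb{C}^{N}.$$

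For existence I would exploit the fact that each monomial $z^{\alpha}$ is homogeneous for the dilation: $(\delta_{\lambda}z)^{\alpha}=\lambda^{c\cdot\alpha}z^{\alpha}$, where $c\cdot\alpha:=\sum_{j} c_{j}\alpha_{j}$ and $c_{j}\in\{1,\ldots,m\}$ is the stratum weight of $z_{j}$. Grouping by weighted degree, set
$$f_{k}(z) := \sum_{\alpha:\, c\cdot\alpha = k} a_{\alpha}z^{\alpha}.$$
Because each $c_{j}\geq 1$, the index set $\{\alpha:c\cdot\alpha=k\}$ is finite, so $f_{k}$ is a holomorphic polynomial and manifestly belongs to $\mathcal{P}_{k}$. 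Absolute convergence of the Taylor series at $z$ licenses the regrouping, yielding $f(z)=\sum_{k=0}^{\infty} f_{k}(z)$ pointwise (indeed, uniformly on compacta).

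For uniqueness, suppose $\sum_{k} g_{k}\equiv 0$ on $G$ with $g_{k}\in\mathcal{P}_{k}$. Fix $z\in G$ and evaluate the identity at $\delta_{\lambda}z$ for arbitrary $\lambda\in\mathbb{C}$; using $g_{k}(\delta_{\lambda}z)=\lambda^{k}g_{k}(z)$ the hypothesis becomes
$$0 = \sum_{k=0}^{\infty} \lambda^{k} g_{k}(z), \qquad \lambda\in\mathbb{C}.$$
The right-hand side is a single-variable power series in $\lambda$ convergent on all of $\mathbb{C}$, hence defines an entire function whose identical vanishing forces every coefficient $g_{k}(z)$ to be zero, and $z$ was arbitrary. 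I do not anticipate any substantive obstacle: the only structural input from the stratified setting is the lower bound $c_{j}\geq 1$ on coordinate weights, which is exactly what guarantees that each graded piece $f_{k}$ is a polynomial rather than an a priori infinite series, so everything else reduces to elementary properties of power series on $\mathbb{C}^{N}$.
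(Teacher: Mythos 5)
Your proof is correct, but it takes a somewhat different route from the paper's. The paper fixes $z$ and expands the single-variable entire function $u(\lambda)=f(\delta_\lambda z)$ in powers of $\lambda$, defining $f_k(z)$ as the $k$-th Taylor coefficient $\frac{1}{k!}\frac{d^k}{d\lambda^k}\big|_{\lambda=0}f(\delta_\lambda z)$; holomorphy of $f_k$ in $z$ comes from this formula, and homogeneity comes from the semigroup law $\delta_{\lambda\mu}=\delta_\lambda\circ\delta_\mu$. You instead take the full $N$-variable Taylor expansion of $f$ as an entire function on $\mathbb{C}^N$ and regroup monomials by the weighted degree $c\cdot\alpha$, using absolute convergence to justify the regrouping. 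Both arguments are sound and both use the same uniqueness argument (comparing coefficients of the entire function $\lambda\mapsto\sum_k\lambda^k g_k(z)$). Your version is more coordinate-bound but has the advantage that polynomiality and finite-dimensionality of each graded piece fall out immediately: combined with uniqueness, it would subsume the paper's separate Lemma \ref{r.d2.10} (that $\mathcal{P}$ consists of polynomials), since any $f\in\mathcal{P}_k$ must coincide with its own $k$-th graded piece, which your construction exhibits as a finite sum of monomials. The paper's version is more intrinsic --- it isolates exactly which property of the dilations is used (the semigroup law and joint holomorphy of $(z,\lambda)\mapsto\delta_\lambda z$) and reduces everything to one-variable function theory, deferring the coordinate computation to where it is actually needed.
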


\begin{proof}
  Notice first that the function $G \times \mathbb{C} \ni (z,\lambda)
  \mapsto \delta_{\lambda} z \in G$ is holomorphic in the sense that each coordinate of
$\delta_{\lambda}z$, in the basis used in Lemma \ref{l.d2.5}, is
  holomorphic.

  Suppose $f:G\rightarrow\mathbb{C}$ is holomorphic, so that
  { $(z,\lambda) \mapsto f(\delta_{\lambda}z)$}
  is holomorphic on $G\times\mathbb{C}$.  Fix
  an arbitrary $z \in G$.  Then the function
  $u(\lambda):=f(\delta_{\lambda}z)$ is an entire function on
  $\mathbb{C}$, and its Taylor expansion
\begin{equation}\label{u-def}
u(\lambda)=\sum_{n=0}^{\infty}\lambda^{n}a_{n}(z)
\end{equation}
determines functions $a_{n}(z)$ which are holomorphic functions on $G$ because
\begin{equation*}
a_{n}(z)=\frac{1}{n!} \frac{d^{n}}{d\lambda^{n}}\Big|_{\lambda=0}
f(\delta_{\lambda}z).
\end{equation*}
Now if $\mu\in\mathbb{C}$ then
\[
\sum_{n=0}^{\infty}\lambda^{n}a_{n}(\delta_{\mu}z)=f(\delta_{\lambda}%
\delta_{\mu}z)=f(\delta_{\lambda\mu}z)=\sum_{n=0}^{\infty}\ (\lambda\mu
)^{n}a_{n}(z)\ \text{for all}\ \lambda\in\mathbb{C.}%
\]
Hence
\[
a_{n}(\delta_{\mu}z)=\mu^{n}a_{n}(z)\ \text{for all}\ z\in G.
\]
Therefore $a_{n}\in\mathcal{P}_{n}$. This proves the existence of the
functions $f_{k}$ satisfying (\ref{e.d2.14}). If $\{g_{k}\}$ is another set
satisfying (\ref{e.d2.14}) then
\[
\sum_{k=0}^{\infty}\lambda^{k}g_{k}(z)=\sum_{k=0}^{\infty}g_{k}(\delta
_{\lambda}z)=f(\delta_{\lambda}z)=\sum_{k=0}^{\infty}f_{k}(\delta_{\lambda
}z)=\sum_{k=0}^{\infty}\lambda^{k}f_{k}(z)
\]
for all $\lambda\in\mathbb{C}$. Hence $g_{k}(z)=f_{k}(z)$ for all $k$ and $z.$
\end{proof}

\begin{notation}
{ Let $\mathcal{P}$ denote the linear span of $\{\mathcal{P}_{k}: k\ge
0\}$; i.e. the set of all \emph{finite} sums of homogeneous functions
(of possibly different degrees).}
\end{notation}

\begin{lemma}
\label{r.d2.10} $\mathcal{P}$ is the set of holomorphic polynomials.
\end{lemma}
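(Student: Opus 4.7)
The plan is to work in the complex coordinates $z_1,\dots,z_N$ introduced in Lemma \ref{l.d2.5}, where $\delta_\lambda z = (\lambda^{c_1} z_1, \dots, \lambda^{c_N} z_N)$ with positive integers $c_j \geq 1$ (here $c_j$ records which stratum $V_{c_j}$ the $j$th basis vector sits in). Relative to this identification $G \equiv \mathbb{C}^N$, a \emph{holomorphic polynomial} on $G$ means a polynomial in $z_1,\dots,z_N$, and the task is to prove the two inclusions.

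For the inclusion $\mathcal{P} \subseteq \{\text{holomorphic polynomials}\}$, I would fix $f \in \mathcal{P}_k$ and use that $f$ is entire on $\mathbb{C}^N$ to write its convergent Taylor expansion at $0$:
\[
f(z) = \sum_{\alpha \in \mathbb{N}^N} a_\alpha z^\alpha.
\]
Substituting the dilation and using the homogeneity identity $f(\delta_\lambda z)=\lambda^k f(z)$ gives
\[
\sum_{\alpha} \lambda^{c\cdot \alpha} a_\alpha z^\alpha \;=\; \lambda^k \sum_\alpha a_\alpha z^\alpha,
\]
where $c\cdot \alpha := \sum_j c_j \alpha_j$. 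Matching coefficients in $\lambda$ forces $a_\alpha = 0$ unless $c \cdot \alpha = k$. Since each $c_j \geq 1$, the constraint $c\cdot \alpha = k$ implies $|\alpha| \leq k$, so only finitely many $a_\alpha$ survive and $f$ is a polynomial. Taking finite sums then shows every element of $\mathcal{P}$ is a holomorphic polynomial.

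For the reverse inclusion, each monomial $z^\alpha$ satisfies $(\delta_\lambda z)^\alpha = \lambda^{c\cdot \alpha} z^\alpha$, so $z^\alpha \in \mathcal{P}_{c\cdot \alpha}$. Hence any polynomial in $z_1,\dots,z_N$ is a finite sum of elements of the $\mathcal{P}_k$'s, and so lies in $\mathcal{P}$.

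The argument is essentially routine once the coordinates are set up, and the only ``obstacle'' worth flagging is the appeal to $c_j \geq 1$: this is what makes each homogeneity class $\mathcal{P}_k$ finite-dimensional and forces the Taylor series of a homogeneous holomorphic function to truncate, rather than producing an infinite series of weighted-homogeneous terms. This positivity is automatic from the stratification, since every basis vector lies in some $V_j$ with $j \geq 1$.
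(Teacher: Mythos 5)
Your proof is correct and follows essentially the same route as the paper: work in the adapted coordinates where $\delta_\lambda$ acts by $z_j \mapsto \lambda^{c_j} z_j$, expand a homogeneous $f$ in its Taylor series, match powers of $\lambda$ to see that only the finitely many multi-indices with $c\cdot\alpha = k$ survive, and observe conversely that each monomial is homogeneous of degree $c\cdot\alpha$. Your explicit remark that $c_j \ge 1$ forces $|\alpha| \le k$ is a slightly more detailed justification of the finiteness than the paper gives, but the argument is the same.
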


\begin{proof}
  In the adapted coordinates $z_1,\dots, z_N$, a monomial $\prod_{j=1}^N
  z_j^{k_j}$ is homogeneous of degree $\sum_{j=1}^N k_j
  c_j$. Therefore any holomorphic polynomial lies in
  $\mathcal{P}$. Conversely, we need to show that a function
  $f\in\mathcal{P}_k$ is actually a polynomial. If its power series
  expansion is given by
  \begin{align}
    f(z) = \sum_{k_1,\dots, k_N \ge 0} a_{k_1,\dots, k_N} z^{k_1} \cdots z^{k_N}
  \end{align}
  then, for all complex $\lambda \ne 0$, we have
  \begin{align}
    \lambda^k f(z) = f(\delta_\lambda z) 
    = \sum_{k_1,\dots, k_N} a_{k_1,\dots, k_N} z^{k_1} \cdots z^{k_N} 
    \lambda^{\sum_{j=1}^N k_j c_j}.
  \end{align}
  Since the coefficient of $\lambda^r$ on the right must be zero  for all $z$ if $ r \ne k$ 
  we actually have
  \begin{align}
    f(z) = \sum_{\sum_{j=1}^N k_j c_j = k}
    a_{k_1,\dots, k_N} z^{k_1} \cdots z^{k_N}
  \end{align}
  The subscripts in the sum form a finite set, showing that $f$ is a polynomial.
\end{proof}

\begin{corollary}\label{Pk-finite-dim}
  $\mathcal{P}_k$ is finite dimensional.
\end{corollary}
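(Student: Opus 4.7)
The plan is to read off finite-dimensionality directly from the explicit description of elements of $\mathcal{P}_k$ obtained at the end of the proof of Lemma \ref{r.d2.10}. That lemma shows that any $f \in \mathcal{P}_k$ has the form
\[
f(z) = \sum_{(k_1,\dots,k_N)\,:\,\sum_{j=1}^N k_j c_j = k} a_{k_1,\dots,k_N}\, z_1^{k_1} \cdots z_N^{k_N},
\]
so $\mathcal{P}_k$ is spanned (as a complex vector space) by the monomials $z_1^{k_1}\cdots z_N^{k_N}$ whose multi-index $(k_1,\dots,k_N)$ satisfies this weighted sum condition. It therefore suffices to check that the index set
\[
S_k := \Bigl\{ (k_1,\dots,k_N) \in \mathbb{Z}_{\geq 0}^N : \sum_{j=1}^N k_j c_j = k \Bigr\}
\]
is finite.

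This is immediate: each exponent $c_j$ is a positive integer (the stratification weight attached to the coordinate $z_j$), so $c_j \geq 1$, and hence any $(k_1,\dots,k_N) \in S_k$ satisfies $k_j \leq k_j c_j \leq k$ for every $j$. Thus $S_k \subseteq \{0,1,\dots,k\}^N$, a finite set, and in particular $|S_k| \leq (k+1)^N$. The corresponding monomials therefore form a finite spanning set for $\mathcal{P}_k$, so $\dim_{\mathbb{C}} \mathcal{P}_k < \infty$. There is no real obstacle here; the only point to be careful about is that we are using the weighted homogeneity extracted in Lemma \ref{r.d2.10} rather than ordinary degree, but since all weights $c_j$ are positive integers the bound on each $k_j$ is automatic.
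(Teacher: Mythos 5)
Your proof is correct and is exactly the argument the paper intends: the corollary is stated as an immediate consequence of Lemma \ref{r.d2.10}, whose proof already exhibits every $f\in\mathcal{P}_k$ as a linear combination of monomials indexed by the finite set $\{(k_1,\dots,k_N):\sum_j k_jc_j=k\}$. Your explicit bound $|S_k|\le (k+1)^N$ using $c_j\ge 1$ just spells out the finiteness the paper leaves implicit.
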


\begin{lemma}
\label{l.d2.11} If $f$ is holomorphic and is given by (\ref{e.d2.14}) then
\begin{equation}
(Zf)(z)=\sum_{k=0}^{\infty}kf_{k}(z). \label{e.d2.15}%
\end{equation}

\end{lemma}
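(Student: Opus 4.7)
The plan is to reduce everything to the one-variable entire function $u(\lambda) := f(\delta_\lambda z)$ constructed in the proof of Lemma \ref{l.d2.9}, and exploit the fact that both $X$ and $Y$ act on $f$ at the point $z$ through $u$.

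First I would fix $z \in G$ and recall that $u(\lambda) = f(\delta_\lambda z)$ is entire on $\mathbb{C}$, with Taylor expansion
\[
u(\lambda) = \sum_{n=0}^{\infty} \lambda^n f_n(z),
\]
where $f_n \in \mathcal{P}_n$ are precisely the holomorphic homogeneous components appearing in (\ref{e.d2.14}). Since the series for $u$ has infinite radius of convergence, it may be differentiated termwise, giving
\[
u'(\lambda) = \sum_{n=1}^{\infty} n \lambda^{n-1} f_n(z).
\]

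Next I would compute $(Xf)(z)$ and $(Yf)(z)$ by the chain rule. From (\ref{e.d2.6}),
\[
(Xf)(z) = \frac{d}{ds}\Big|_{s=0} u(e^s) = u'(1) \cdot \frac{d}{ds}\Big|_{s=0} e^s = u'(1),
\]
and from (\ref{e.d2.7}),
\[
(Yf)(z) = \frac{d}{d\theta}\Big|_{\theta=0} u(e^{i\theta}) = u'(1) \cdot \frac{d}{d\theta}\Big|_{\theta=0} e^{i\theta} = i\, u'(1).
\]
Then (\ref{e.d2.8}) gives
\[
(Zf)(z) = \tfrac{1}{2}\bigl((Xf)(z) - i(Yf)(z)\bigr) = \tfrac{1}{2}\bigl(u'(1) - i\cdot i\, u'(1)\bigr) = u'(1) = \sum_{n=0}^{\infty} n\, f_n(z),
\]
which is exactly (\ref{e.d2.15}).

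There is no real obstacle here; the only thing to be careful about is that the termwise differentiation of $\sum \lambda^n f_n(z)$ is justified (which it is, since $u$ is entire so the series converges uniformly on compact sets in $\lambda$), and that the chain rule applies along the smooth curves $s \mapsto e^s$ and $\theta \mapsto e^{i\theta}$ in $\mathbb{C}$. A tempting alternative would be to use (\ref{e.d2.13}) term by term and write $Zf = \sum Z f_k = \sum k f_k$, but this requires exchanging $Z$ with an infinite sum that is only known to converge pointwise; routing the argument through the entire function $u(\lambda)$ avoids that difficulty entirely.
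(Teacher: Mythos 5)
Your proof is correct and follows essentially the same route as the paper: both arguments reduce the computation to the entire function $u(\lambda)=f(\delta_\lambda z)$, justify termwise differentiation of its Taylor series, and use that for holomorphic $f$ the action of $Z$ at $z$ collapses to $u'(1)$ (the paper writes this as $(Zf)(z)=(Xf)(z)$ directly, while you derive it by computing $Xf=u'(1)$ and $Yf=iu'(1)$ separately). The extra care you take with the chain rule along $s\mapsto e^s$ and $\theta\mapsto e^{i\theta}$ is a harmless elaboration of the same idea.
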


\begin{proof}
Since $f(\delta_{\lambda}z)=\sum_{k=0}^{\infty}\lambda^{k}f_{k}(z)$ for all
$\lambda\in\mathbb{C}$ we have
\begin{equation*}
(Zf)(z)=(Xf)(z)=\frac{d}{ds}\big|_{s=0}\sum_{k=0}^{\infty}e^{ks}f_{k}(z)=\sum
_{k=0}^{\infty}kf_{k}(z).
\end{equation*}
The interchange of derivative and sum is justified since
$\sum_{k=0}^{\infty}e^{ks}f_{k}(z)$ is the Taylor series of the
holomorphic function $u(e^s)$, where $u(\lambda) := f(\delta_\lambda
z)$ as in the proof of Lemma \ref{l.d2.9}, and this can be
differentiated termwise.
\end{proof}

We remark for future reference that by (\ref{e.d2.11}) and
(\ref{e.d2.13}), we have
\begin{equation}
  \label{Z-X-poly}
  Zf = Xf, \qquad f \in \mathcal{P}.
\end{equation}

\begin{lemma}\label{dilate-poly}
  Let $\xi \in V_j$  and $f \in \mathcal{P}_k$.  Then $\widetilde{\xi} f
  \in \mathcal{P}_{k-j}$ if $k \ge j$, and $\widetilde{\xi} f = 0$ if
  $k < j$.
\end{lemma}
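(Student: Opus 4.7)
The plan is to exploit the interaction between the dilation $\delta_\lambda$ and left translations $L_x$, which is compatible by virtue of $\delta_\lambda$ being a group automorphism. The core identity is
\[
\delta_\lambda \circ L_x = L_{\delta_\lambda x} \circ \delta_\lambda,
\]
from which, taking derivatives at the identity, one gets
\[
(L_{\delta_\lambda x})_* \circ (\delta_\lambda)_{*,e} = (\delta_\lambda)_{*,x} \circ (L_x)_*.
\]
Since $(\delta_\lambda)_{*,e}$ is just the linear map $\delta_\lambda$ on $\mathfrak{g}$ and acts on $V_j$ by multiplication by $\lambda^j$, evaluating on $\xi \in V_j$ yields
\[
\lambda^j\, \widetilde{\xi}(\delta_\lambda x) = (\delta_\lambda)_{*,x}\,\widetilde{\xi}(x),
\]
equivalently $\widetilde{\xi}(\delta_\lambda x) = \lambda^{-j}\,(\delta_\lambda)_{*,x}\,\widetilde{\xi}(x)$ for $\lambda \neq 0$.

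Using this, I would compute
\[
(\widetilde{\xi} f)(\delta_\lambda z) = df_{\delta_\lambda z}\bigl(\widetilde{\xi}(\delta_\lambda z)\bigr) = \lambda^{-j}\, d(f \circ \delta_\lambda)_z\bigl(\widetilde{\xi}(z)\bigr) = \lambda^{k-j}\,(\widetilde{\xi} f)(z),
\]
where in the middle step I used the chain rule, and in the last step the hypothesis $f \circ \delta_\lambda = \lambda^k f$. Thus $\widetilde{\xi} f$ satisfies the pointwise homogeneity relation of degree $k-j$ in the sense of \eqref{e.d2.10}, at least for all $\lambda \neq 0$. I would also note that $\widetilde{\xi} f$ is holomorphic: since $G$ is a complex Lie group and left translation is biholomorphic, $\widetilde{\xi}$ is a holomorphic vector field, so it preserves holomorphy. (Alternatively, since $f \in \mathcal{P}_k$ is a holomorphic polynomial by Lemma \ref{r.d2.10}, and in adapted coordinates $\widetilde{\xi}$ is a first-order differential operator with holomorphic polynomial coefficients coming from BCH, $\widetilde{\xi} f$ is again a holomorphic polynomial.)

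In the case $k \ge j$, the derived homogeneity and holomorphy give $\widetilde{\xi} f \in \mathcal{P}_{k-j}$ directly from Definition \ref{d.d2.6} and Notation \ref{n.d2.8}. In the case $k < j$, the relation $(\widetilde{\xi} f)(\delta_\lambda z) = \lambda^{k-j}(\widetilde{\xi} f)(z)$ forces $\widetilde{\xi} f \equiv 0$: letting $\lambda \to 0$, the left side converges to $(\widetilde{\xi} f)(0)$ by smoothness, while the right side blows up unless $(\widetilde{\xi} f)(z) = 0$. The main subtlety is merely the bookkeeping of the pushforward identity, together with verifying that $\widetilde{\xi} f$ is holomorphic; everything else is an application of the chain rule and the scaling of $\delta_\lambda$ on the graded pieces $V_j$.
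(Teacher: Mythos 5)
Your proposal is correct and follows essentially the same route as the paper's proof: both exploit the automorphism identity $L_{\delta_\lambda z} = \delta_\lambda \circ L_z \circ \delta_{\lambda^{-1}}$ (your pushforward identity is just this, differentiated), the scaling of $\delta_\lambda$ by $\lambda^j$ on $V_j$, and the homogeneity of $f$ to derive $(\widetilde{\xi} f)(\delta_\lambda z) = \lambda^{k-j}(\widetilde{\xi} f)(z)$, then handle $k<j$ by continuity at the identity. No gaps.
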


\begin{proof}
  First, since $f$ is holomorphic and $\widetilde{\xi}$ is left-invariant, $\widetilde{\xi}
  f$ is holomorphic.
  Next, since $\delta_\lambda$ is a group homomorphism, for any $z \in
  G$ we
  have $L_{\delta_\lambda(z)} = \delta_\lambda \circ L_z \circ
  \delta_{\lambda^{-1}}$.  By left-invariance of $\widetilde{\xi}$
  we have
  \begin{align*}
    (\widetilde{\xi} f)(\delta_\lambda z) &=
    ((L_{\delta_\lambda(z)})_* \xi) f \\
    &= ( \delta_\lambda (L_z)_* \delta_{\lambda^{-1}} \xi ) f \\
    &= \lambda^{-j} (\delta_\lambda (L_z)_* \xi) f && \text{since $\xi
      \in V_j$}\\
    &= \lambda^{-j} ((L_z)_* \xi) (f \circ \delta_\lambda) \\
    &= \lambda^{k-j} ((L_z)_* \xi) f && \text{since $f \in \mathcal{P}_k$}\\
    &= \lambda^{k-j} \widetilde{\xi} f(z).
  \end{align*}
  Thus $f \in \mathcal{P}_{k-j}$.  If $k-j < 0$ then the fact that
  {$\widetilde{\xi} f$}
   is continuous at the identity leads to the
  conclusion that $f \equiv 0$.
\end{proof}

\subsection{Sub-Riemannian geometry on $G$}\label{sub-riemannian-section}

As before, let $\mathfrak{g}$ be a stratified complex Lie algebra with
its connected, simply connected complex Lie group $G$.  For this
section, we will use $J$ to denote the complex structure on
$\mathfrak{g}$.  In this section, we collect a number of facts about
the { sub-Riemannian} geometry of $G$ and its
hypoelliptic Laplacian.  { Although much of this
  development is standard, we shall be rather explicit with our
  definitions to fix notation and avoid any possible ambiguity.}

View $\mathfrak{g}$ as a real vector space, and let $\mathfrak{g}^*$
be its dual space.  Let $h : \mathfrak{g}^* \times \mathfrak{g}^* \to
\mathbb{R}$ be a symmetric, positive semidefinite, real bilinear form
on $\mathfrak{g}^*$. { We shall think of $h$ as a ``dual
  metric'' on the dual $\mathfrak{g}^*$, despite the fact that it is
  degenerate, i.e. only positive semidefinite instead of positive definite.}  Suppose further that
$h$ is Hermitian, i.e. $h(J^* \alpha, J^* \beta) = h(\alpha, \beta)$,
where $J^*$ is the adjoint of $J$.  { (This ensures that
  $h$, in some sense, respects the complex structure of
  $\mathfrak{g}$.)}

Let $K := \{ \alpha \in \mathfrak{g}^* : h(\alpha,\alpha) = 0\}$ be
the null space of $h$ and let $H = K^0 = \bigcap_{\alpha \in K} \ker
\alpha \subset \mathfrak{g}$ be the backward annihilator of $K$; $H$
is called the \defword{horizontal subspace} of $\mathfrak{g}$.  Note
that $H$ is invariant under $J$.

Henceforth we assume the following non-degeneracy condition:
\begin{assumption}
  $H = V_1$.
\end{assumption}

In particular, H\"ormander's condition is satisfied: $H$ generates
$\mathfrak{g}$.  In fact, H\"ormander's condition is satisfied if and
only if $V_1 \subset H$; we need the opposite inclusion to ensure that
$h$ interacts nicely with the dilation structure on $G$.

Now $h$ induces a natural real-linear map $\Phi : \mathfrak{g}^* \to
\mathfrak{g}$ defined by $\alpha(\Phi \beta) = h(\alpha, \beta)$ with
kernel $K$ and image $H$.  (Note that $\Phi = J \Phi J^*$.)  We may
then define a bilinear form $g : H \times H \to \mathbb{R}$ on $H$ by
$g(\Phi \alpha, \Phi \beta) = h(\alpha, \beta)$ which is easily seen
to be well-defined, Hermitian (i.e. $g(v,w) = g(Jv, Jw)$), and
positive definite.

{
By left translation, we can extend $h$ to a (degenerate)
left-invariant dual metric (still denoted by $h$) on $T^* G$, defined
by $h_x(\alpha_x, \beta_x) = h(L_x^* \alpha_x, L_x^* \beta_x)$ for
$\alpha_x, \beta_x \in T^*_x G$.  Then $H$ extends to a left-invariant
sub-bundle of $TG$; namely, $v_x \in H_x \subset T_x G$ iff
$(L_{x^{-1}})_* v_x \in H$, which happens iff $\alpha_x(v_x) = 0$ for
every $\alpha_x \in T^*_x G$ satisfying $h_x(\alpha_x, \alpha_x)=0$.
$H_x$ is the \defword{horizontal subspace} of $T_x G$ and vectors $v_x
\in H_x$ are said to be \defword{horizontal}.  The bundle $H$ itself
is sometimes called the \defword{horizontal distribution}.  We can
also extend $g$ to a left-invariant positive definite inner product on
$H$, defined by $g_x(v_x, w_x) = g((L_{x^{-1}})_* v_x, (L_{x^{-1}})_*
w_x)$ for $v_x, w_x \in H_x$.  $g$ is called a \defword{sub-Riemannian
  metric}.  If we define $\Phi_x : T^*_x G \to T_x G$ by $\Phi_x =
(L_x)_* \Phi L_x^*$ then the image of $\Phi_x$ is $H_x$, and we have
$g_x(\Phi_x \alpha_x, \Phi_x \beta_x) = h_x(\alpha_x, \beta_x)$.
Given a smooth function $f : G \to \mathbb{R}$, we can define its
left-invariant \defword{sub-gradient} $\nabla f \in H$ by $\nabla f(x)
= \Phi_x(df(x))$.

We wish to consider complex functions, one-forms, vector fields, etc.,
on $G$, so we shall now complexify everything in sight.  At each $x
\in G$, we form the complexified tangent space $T_x G \otimes
\mathbb{C}$, which, as mentioned in Remark \ref{i-remark}, is a
complex vector space with the complex scalar multiplication $\zeta
\cdot (v_x \otimes \eta) = v_x \otimes (\zeta \eta)$.  When taking
this tensor product, we view $T_x G$ as a \emph{real} vector space,
forgetting that it already has the natural complex structure $J_x =
(L_x)_* J (L_{x^{-1}})_*$.  This means that $T_x G \otimes \mathbb{C}$
now has \emph{two} distinct complex structures: multiplication by $i$
(i.e. $v_x \otimes \eta \mapsto v_x \otimes i \eta$), and $J_x$ (which
we extend to $T_x G \otimes \mathbb{C}$ by complex linearity: $J_x i
v_x = i J_x v_x$).  A complex vector field can thus be viewed as a
smooth section of the complexified tangent bundle $TG \otimes
\mathbb{C}$.  The complexified horizontal bundle $H \otimes
\mathbb{C}$ is naturally contained in $TG \otimes \mathbb{C}$.  We
likewise form the complexified cotangent space $T^*_x G \otimes
\mathbb{C}$ and note that it can be viewed as the complex dual space
of $T_x G \otimes \mathbb{C}$.  If $f : G \to \mathbb{C}$ is a complex
function, written as $f=u+iv$, then its differential $df$ is a complex
one-form, a smooth section of $T^* G \otimes \mathbb{C}$ given by $df
=  du + i dv$.  $T^* G \otimes \mathbb{C}$ also has two complex structures:
multiplication by $i$, and $J^*_x = L_{x^{-1}}^* J^* L_x^*$ (extended
by complex linearity).  In particular, if $f$ is holomorphic then we
have the Cauchy--Riemann equation $J^* df = i df$; that is, $df$ is a
complex one-form of type $(1,0)$.

Now we extend $h$ to $T^* G \otimes \mathbb{C}$ in such a way as to
make it \emph{complex bilinear} with respect to multiplication by $i$;
that is, $h_x(i \alpha_x, \beta_x) = h_x(\alpha_x, i\beta_x) = i
h_x(\alpha_x, \beta_x)$.  So now $h_x$ is complex bilinear with
respect to $i$, and Hermitian with respect to $J_x^*$.  We likewise
extend $\Phi_x$ to a complex linear map $\Phi_x : T^*_x G \otimes
\mathbb{C} \to H_x \otimes \mathbb{C}$, and then defining $g_x$
analogously as before makes it a complex bilinear form on $H_x \otimes
\mathbb{C}$.  Note that $g_x$ remains Hermitian with respect to $J_x$.  By an
abuse of terminology, we shall continue to call $g$ and $h$ the
sub-Riemannian metric and dual metric, respectively.  We now also have
the sub-gradient $\nabla f(x) = \Phi_x(df(x)) \in T_x G \otimes
\mathbb{C}$ defined for complex functions.

We can describe this geometry more explicitly by choosing a set of
left-invariant real vector fields $X_1, Y_1, \dots, X_k, Y_k$ which
span $H$, are $g$-orthonormal, and have $Y_j = J X_j$.  Then the
sub-gradient is given by
\begin{equation*}
  \nabla f(x) = \sum_j (X_j f)(x) X_j(x) +
  (Y_j f)(x) Y_j(x)
\end{equation*}
and for smooth $f_1, f_2 : G \to \mathbb{C}$ we
have }
\begin{equation}\label{gXY}
g(\nabla f_1, \nabla \bar{f}_2) = h(df_1, d\bar{f}_2) =
\sum_j  \{X_j f_1 X_j \bar{f}_2 + Y_j f_1 Y_j \bar{f}_2\}.
\end{equation}
We shall use $|\nabla f|^2$ as shorthand for $g(\nabla f, \nabla
\bar{f})$.

Alternatively, letting 
\begin{equation}\label{Zj-def}
\begin{split}
  Z_j &= \frac{1}{2}(X_j - i Y_j) \\
  \bar{Z}_j &= \frac{1}{2} (X_j + i Y_j)
\end{split}
\end{equation}
 so that $Z_j$ and $\bar{Z}_j$ are { complex vector fields} of type
$(1,0)$ and $(0,1)$ respectively, we get 
\begin{align}
\nabla f(x) &= 2 \sum_j  \Big((Z_j f)(x)
\bar{Z}_j(x) + (\bar{Z}_j f)(x) Z_j(x)\Big) \\
g(\nabla f_1, \nabla
\bar{f}_2) = h(df_1, d \bar{f}_2) &= 2 \sum_j  \Big(Z_j f_1 \bar{Z}_j \bar{f}_2 + \bar{Z}_j f_1 Z_j \bar{f}_2 \Big).
\label{gZ}
\end{align}
We remark in passing that $X_j$ and $Y_j$ commute (since, using that
fact that $\mathfrak{g}$ is a complex Lie algebra, $[X_j, Y_j] = [X_j,
  JX_j] = J[X_j, X_j] = 0$), and thus $Z_j$ and $\bar{Z}_j$ commute.

Note that when $f$ is real, we have
\begin{equation}\label{gradsq-real}
  |\nabla f|^2 := g(\nabla f, \nabla f) = h(df, df) = 4
  \sum_j |Z_j f|^2
\end{equation}
and when $f$ is holomorphic,
\begin{equation}\label{gradsq-holo}
  |\nabla f|^2 = 2
  \sum_j |Z_j f|^2.
\end{equation}

\begin{example}\label{ex-heis-metric}
  Returning to the example of the complex Heisenberg group begun in
  Example \ref{ex-heis}, consider $\mathbb{H}_3^{\mathbb{C}} =
  \mathbb{C}^3$ with its Euclidean coordinates $(z_1, z_2, z_3)$. Let
  $h$ be the left-invariant dual metric
  given at the identity $e=0$ by
  \begin{align*}
    h_e(dz_1, d\bar{z}_1) = h_e(dz_2, d\bar{z}_2) &= 2 \\
    h_e(dz_3, d\bar{z}_3) &= 0 \\
    h_e(dz_j, d\bar{z}_k) &= 0, \qquad j \ne k.
  \end{align*}
  This
  makes $h$ Hermitian with respect to the complex structure of
  $\mathbb{H}_3^{\mathbb{C}}$, so that $h_e(dz_j, dz_k) =
  h_e(d\bar{z}_j, d\bar{z}_k) = 0$ for all $j,k$.  (The 2 appearing in
  the first line ensures that the cotangent vectors $dx_i, dy_j$ are
  orthonormal under $h_e$.)

  From now on, any occurrence of $\mathbb{H}_3^{\mathbb{C}}$ will be
  understood to carry this dual metric $h$, and the corresponding
  metric $g$.

  We can choose the left-invariant complex vector fields $Z_j$
  discussed in (\ref{Zj-def}) to be those which equal
  $\frac{\partial}{\partial z_j}$ at the identity.  They are given by
  \begin{align*}
    Z_1 &= \frac{\partial}{\partial z_1} 
    - \frac{1}{2} z_2 \frac{\partial}{\partial z_3} \\
    Z_2 &= \frac{\partial}{\partial z_2} 
    + \frac{1}{2} z_1 \frac{\partial}{\partial z_3} \\
    Z_3 &= \frac{\partial}{\partial z_3}.
  \end{align*}
\end{example}

\begin{example}\label{ex-heis-weyl-metric}
  For the Heisenberg--Weyl group $\mathbb{H}^{\mathbb{C}}_{2n+1}$ of
  Example \ref{ex-heis-weyl}, we may similarly define a
  left-invariant dual metric $h$ by
  \begin{align*}
    h_e(dz_j, d\bar{z}_j) &= 2, & 1 \le j \le 2n \\
    h_e(dz_{2n+1}, d\bar{z}_{2n+1}) &= 0 \\
    h_e(dz_j, d\bar{z}_k) &= 0, & j \ne k.
  \end{align*}
\end{example}

Let us see how the dilations interact with the left-invariant real vector fields $X_j,
Y_j$.  If $y \in G$, and
$\lambda = \alpha + i \beta \in \mathbb{C}$, we have
\begin{equation}\label{X-dilation}
  \begin{split}
  (\delta_\lambda)_* X_j(y) &= (\delta_\lambda L_y)_* X_j(e) \\
  &= (L_{\delta_\lambda(y)} \delta_\lambda)_* X_j(e) \\
  &= (L_{\delta_\lambda(y)})_* (\alpha X_j(e) + \beta J X_j(e)) \\
  &= \alpha X_j(\delta_\lambda(y)) + \beta J X_j(\delta_\lambda(y)).
\end{split}
\end{equation}
The same holds for $Y_j$.  Thus we get
\begin{equation}\label{Z-dilation}
  \begin{split}
    (\delta_\lambda)_* Z_j(y) &= \lambda Z_j(\delta_\lambda(y)) \\
    (\delta_\lambda)_* \bar{Z}_j(y) &= \bar{\lambda}
    \bar{Z}_j(\delta_\lambda(y)).
  \end{split}
\end{equation}

The \defword{sub-Laplacian} $\Delta$ is defined by
\begin{equation}\label{sub-Laplacian-def}
  \Delta  = \sum_j X_j^2  + Y_j^2 = 4 \sum_j Z_j \bar{Z}_j.
\end{equation}
It is shown in
\cite{purplebook} that $\Delta$, with domain $C^\infty_c(G)$, is a
hypoelliptic operator and is essentially self-adjoint on $L^2(m)$.
As a consequence of (\ref{Z-dilation}), we have 
\begin{equation}
  \Delta (f \circ \delta_\lambda) = |\lambda|^2 (\Delta f) \circ \delta_\lambda.
\end{equation}
Likewise, if $e^{s \Delta/4}$ is the heat semigroup for $\Delta$, we
have
\begin{equation}\label{semigroup-dilate}
  e^{s \Delta/4} (f \circ \delta_\lambda) = (e^{s |\lambda|^2
    \Delta/4} f) \circ \delta_\lambda.
\end{equation}

Finally,  we recall the definition of the Carnot--Carath\'eodory
  distance on $G$, { and some of its basic properties}.  Suppose $\gamma : [0,1] \to G$ is a smooth path.
If $\dot{\gamma}(t) \in H_{\gamma(t)}$ for each $t$, we say $\gamma$
is \defword{horizontal}, and we define its \defword{length} by 
\begin{equation}
  \ell(\gamma) = \int_0^1 \sqrt{g(\dot{\gamma}(t), \dot{\gamma}(t))}\,dt.
\end{equation}
Then for $x,y \in G$, we define the \defword{Carnot--Carath\'eodory
  distance} $d$ by
\begin{equation*}
  d(x,y) = \inf\{\ell(\gamma) : \gamma \text{ horizontal},
  \gamma(0)=x, \gamma(1)=y\}
\end{equation*}
Since H\"ormander's condition is satisfied, the Chow--Rashevskii and
ball-box theorems \cite{montgomery,nagel-stein-wainger} imply that $d(x,y) <
\infty$, and that $d$ is a metric which induces the manifold topology
on $G$ (which indeed is just the Euclidean topology on the
finite-dimensional vector space $G = \mathfrak{g}$).

Since we are denoting the complex structure on $\mathfrak{g}$ by $J$,
for $v \in V_1 \subset \mathfrak{g} = T_e G$ we have $(\delta_{\alpha
  + i \beta})_* v = \delta_{\alpha + i \beta}(v) = \alpha v + \beta
Jv$.  Thus, for $v,w \in V_1$ we have $g((\delta_\lambda)_* v,
(\delta_\lambda)_* w) = |\lambda|^2 g(v,w)$.  Since $\delta_\lambda$
is a group homomorphism and $g$ is left invariant, it follows that the
same holds for $v,w \in H_x$.  In particular,
$\ell(\delta_\lambda(\gamma)) = |\lambda| \ell(\gamma)$, and so $d(e,
\delta_\lambda(x)) = |\lambda| d(e,x)$.

By fixing a basis for $\mathfrak{g}$, we may linearly identify it
(non-canonically) with Euclidean space $\mathbb{R}^{\dim_{\mathbb{R}}
  \mathfrak{g}}$; let $|\cdot|$ denote the pullback of the Euclidean
norm onto $\mathfrak{g}$.  For $v \in \mathfrak{g}$, write $v = v_1 + \dots + v_m$ with $v_k
\in V_k$ and let
\begin{equation}
  |v|_1 = \sum_{k=1}^m |v_k|^{1/k}.
\end{equation}
Note that $|\delta_{\lambda} v|_1 = |\lambda| |v|_1$.  Since we have
identified $G$ with $\mathfrak{g}$ as a set, $|\cdot|_1$ also makes
sense on $G$.  It is shown in \cite[Proposition 5.1.4]{blu-book} that
there is a constant $c$ such that for every $x \in G$ we have
\begin{equation}\label{d-scaling}
  \frac{1}{c} |x|_1 \le d(e,x) \le c |x|_1.
\end{equation}
The proof is simple: since $d(e,\cdot)$ and $|\cdot|_1$ have the same
scaling with $\delta_\lambda$, it suffices to consider $x$ with $|x|_1
= 1$.  The set of such $x$ is compact, so $d(e, \cdot)$ attains a
finite maximum and a nonzero minimum on this set.

\subsection{Properties of the heat kernel}\label{subsec:heatkernel}

It is shown in \cite{purplebook} that the Markovian heat semigroup
$e^{s \Delta/4}$ admits a right convolution
kernel $\rho_s$, i.e. $e^{s \Delta/4} f = f \ast \rho_s$, which we
shall call the \defword{heat kernel}; it is also shown that $\rho_s$
is $C^\infty$ and strictly positive.  Since $e^{s \Delta/4}$ is
Markovian, the heat kernel measure $\rho_s\,dm$ is a probability
measure.  

\begin{notation}
  For $s > 0$ and $0 < p < \infty$, we write $L^p(\rho_s)$ as short
  for $L^p(G, \rho_s\, dm)$.  As usual, for $0 < p < 1$, the vector
  space $L^p(\rho_s)$ is equipped with the topology induced by the complete
  translation-invariant metric $d(f,g) = \int|f-g|^p\,\rho_s\,dm$.
  Nonetheless $\|f\|_{L^p(\rho_s)}$ will still mean
  $\left(\int|f|^p\,\rho_s\,dm\right)^{1/p}$, even for the case $0 < p
  < 1$ in which it does not define a norm.
\end{notation}

Since $\rho_s$ is bounded, and bounded below on compact sets, any
sequence converging in $L^p(\rho_s)$ also converges in
$L^p_{\mathrm{loc}}(m)$.  As such, if $f_n$ are holomorphic functions
and $f_n \to f$ in $L^p(\rho_s)$, then we also have $f_n \to f$
uniformly on compact sets, and so $f$ is holomorphic.  Thus 
$L^p(\rho_s) \cap \mathcal{H}$ is closed in $L^p(\rho_s)$.

We record here some estimates for the heat kernel
\begin{theorem}
\label{heat-kernel-upper-lower}
  For each $0 < \epsilon < 1$ there are constants $C,C'$ 
   such that for every $x \in G$ and $s > 0$,
  \begin{equation}
    \frac{C}{m(B(e, \sqrt{s}))} e^{-d(e,x)^2/(1-\epsilon)s} \le \rho_s(x) \le \frac{C'}{m(B(e, \sqrt{s}))} e^{-d(e,x)^2/(1+\epsilon)s}
  \end{equation}
  where $m(B(e,\sqrt{s}))$ is the Lebesgue (Haar) measure of the
  $d$-ball centered at the origin (or any other point) of radius
  $\sqrt{s}$.
\end{theorem}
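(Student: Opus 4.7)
The plan is to reduce the two-sided bound to the time $s=1$ case using the dilation structure, and then appeal to the general Gaussian heat kernel theory for doubling spaces with a scale-invariant Poincaré inequality.

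First I would establish the scaling identity for $\rho_s$. Starting from (\ref{semigroup-dilate}) and the change of variables (\ref{m-dilate-integrate}), a direct computation with the convolution formula $e^{s\Delta/4}f = f * \rho_s$ yields, for real $\lambda > 0$,
\begin{equation*}
\rho_{\lambda^2 s}(x) = \lambda^{-2D}\, \rho_s(\delta_{1/\lambda} x), \qquad \text{and in particular} \qquad \rho_s(x) = s^{-D}\, \rho_1(\delta_{1/\sqrt{s}} x).
\end{equation*}
Combined with the metric scaling $d(e, \delta_\lambda x) = |\lambda|\, d(e,x)$ from Section~\ref{sub-riemannian-section} and the volume identity $m(B(e,\sqrt{s})) = s^D\, m(B(e,1))$ (immediate from (\ref{m-dilate})), one sees that the claimed bound for arbitrary $s>0$ is equivalent to its time-$1$ restriction
\begin{equation*}
C\, e^{-d(e,y)^2/(1-\epsilon)} \le \rho_1(y) \le C'\, e^{-d(e,y)^2/(1+\epsilon)}
\end{equation*}
valid for all $y \in G$ and any prescribed $0 < \epsilon < 1$ (with constants depending on $\epsilon$).

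For the time-$1$ estimate, I would appeal to the by-now classical Grigoryan--Saloff-Coste theory, which states that two-sided Gaussian bounds for the heat kernel on a metric measure Dirichlet space are equivalent to the volume-doubling property together with a scale-invariant $L^2$ Poincaré inequality relative to the intrinsic distance. In our setting, $(G, d, m)$ is volume doubling as a trivial consequence of $m(B(x,2r)) = 2^{2D} m(B(x,r))$, and the scale-invariant Poincaré inequality for the sub-Laplacian on a stratified group is a theorem of Jerison. Together these inputs yield matching Gaussian estimates with exponents arbitrarily close to $1$, which is exactly the content of the time-$1$ bound. This material is also developed in detail in [purplebook] in the specific setting of nilpotent Lie groups, so the cleanest writeup would cite those sources.

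The main obstacle is that one must get the constants $(1\pm\epsilon)$ in the Gaussian exponents to be arbitrarily close to each other. Obtaining \emph{some} Gaussian upper bound is a relatively soft consequence of a Davies-type perturbation argument applied to the $L^2$ semigroup, and some Gaussian lower bound follows from an elementary chain-of-balls argument combined with on-diagonal estimates. However, matching the sharp Varadhan constant $1$ on both sides to within $\epsilon$ uses the full strength of the doubling/Poincaré machinery (and the identification of the intrinsic Dirichlet metric with the Carnot--Carathéodory distance $d$, which for the sub-Laplacian on $G$ is standard).
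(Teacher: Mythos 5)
Your proposal is correct in outline, but it takes a different (and less direct) route than the paper, which simply cites the two bounds from the literature: the upper bound is Theorem IV.4.2 of \cite{purplebook} and the lower bound is Theorem 1 of \cite{varopoulos-II} (with the factor of $4$ absorbed by the normalization $e^{s\Delta/4}$). Your scaling reduction to $s=1$ is valid and cleanly derived from (\ref{semigroup-dilate}) and (\ref{m-dilate-integrate}) --- indeed it amounts to the identity (\ref{rho-dilate}) that the paper records immediately after this theorem --- but it is not needed if one quotes results that are already stated uniformly in $s$. The one place where your argument needs more care is the lower bound: the standard Grigor'yan--Saloff-Coste equivalence (volume doubling $+$ scale-invariant Poincar\'e $\Leftrightarrow$ parabolic Harnack $\Leftrightarrow$ two-sided Gaussian bounds) delivers a lower bound of the form $c\,V(x,\sqrt{s})^{-1}e^{-C d(e,x)^2/s}$ with some large constant $C$, \emph{not} automatically with exponent $1/(1-\epsilon)$ for arbitrarily small $\epsilon$. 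Getting the sharp Gaussian factor in the lower bound requires an additional optimized chaining argument along a geodesic (or, as the paper does, the specific result of Varopoulos for groups of polynomial growth), whereas the sharp factor $1/(1+\epsilon)$ in the upper bound does follow softly from Davies-type or integrated-maximum-principle arguments. You flag this asymmetry but attribute the resolution to ``the full strength of the doubling/Poincar\'e machinery,'' which slightly understates the point; a complete writeup should cite the refined lower-bound statement explicitly, since that is exactly the content the paper's citation of \cite{varopoulos-II} supplies. What your approach buys is generality (it would apply verbatim to any sub-Laplacian on a doubling space with a Poincar\'e inequality), at the cost of invoking heavier machinery than the two targeted references the paper uses.
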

\begin{proof}
  The upper bound is Theorem IV.4.2 of \cite{purplebook}.  The lower
  bound is Theorem 1 of \cite{varopoulos-II}.  Note that our choice to
  consider the semigroup $e^{s \Delta/4}$ rather than $e^{s\Delta}$
  accounts for a missing factor of $4$ in the exponents compared to
  the results stated in \cite{purplebook, varopoulos-II}.
\end{proof}

\begin{theorem}
  \label{heat-kernel-deriv}
  Suppose $\xi_1, \dots, \xi_k \in \mathfrak{g}$.  Let $m$ be a nonnegative integer, $r \ge
  0$, and $0 < s < t < \infty$.  There is a constant $C$
  such that for all $y \in G$
  \begin{equation}\label{heat-kernel-deriv-eq}
    \sup_{d(x,e) < r} \left\vert \left(\frac{d^m}{ds^m} \widetilde{\xi}_1 \cdots
    \widetilde{\xi}_k \rho_s\right)(y \cdot x) \right\vert \le C \rho_t(y).
  \end{equation}
\end{theorem}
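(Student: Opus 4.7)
The plan is to establish the two-step bound
\[
|\psi_s(yx)| \le C \rho_{s'}(yx) \le C' \rho_t(y)
\]
for some intermediate time $s' \in (s, t)$, where $\psi_s := \widetilde{\xi}_1 \cdots \widetilde{\xi}_k (\Delta/4)^m \rho_s$. Here I have used the heat equation $\partial_s \rho_s = \tfrac{1}{4}\Delta \rho_s$ to convert the time derivative into a spatial one, so that $\frac{d^m}{ds^m} \widetilde{\xi}_1 \cdots \widetilde{\xi}_k \rho_s = \psi_s$.

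For the first inequality I would use the semigroup identity $\rho_s = \rho_{s - \epsilon} \ast \rho_\epsilon$ for some small $\epsilon > 0$. Since $\widetilde{\xi}_j$ and $\Delta$ are left-invariant, the differential operator can be moved onto the second factor, giving
\[
\psi_s = \rho_{s - \epsilon} \ast \bigl[\widetilde{\xi}_1 \cdots \widetilde{\xi}_k (\Delta/4)^m \rho_\epsilon\bigr].
\]
Standard hypoelliptic heat kernel theory (see \cite{purplebook}) provides, for any $\epsilon' > \epsilon$, a pointwise Gaussian estimate
\[
|\widetilde{\xi}_1 \cdots \widetilde{\xi}_k (\Delta/4)^m \rho_\epsilon(u)| \le C_\epsilon \rho_{\epsilon'}(u),
\]
the polynomial correction factors arising from differentiating a Gaussian of rate $1/\epsilon$ being absorbed by the slight relaxation to rate $1/\epsilon'$. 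Convolving yields $|\psi_s| \le C_\epsilon \rho_{s - \epsilon + \epsilon'}$; choosing $\epsilon' - \epsilon$ small enough that $s' := s - \epsilon + \epsilon' < t$ completes this step.

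For the second inequality, left-invariance of $d$ and the triangle inequality give $d(e, yx) \ge d(e, y) - r$, and hence for any $\delta > 0$,
\[
d(e, yx)^2 \ge (1 - \delta) d(e, y)^2 - r^2/\delta
\]
(the region $d(e, y) \le r$ being dealt with separately using the continuity and strict positivity of $\rho_t$). Combined with the Gaussian upper bound on $\rho_{s'}(yx)$ and the Gaussian lower bound on $\rho_t(y)$ from Theorem \ref{heat-kernel-upper-lower}, the comparison reduces to a condition of the form $(1 - \delta)/(1 + \epsilon_1)s' \ge 1/(1 - \epsilon_2) t$. This is satisfied by choosing $\delta, \epsilon_1, \epsilon_2$ sufficiently small, which is possible precisely because $s' < t$.

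The main obstacle is the intermediate derivative estimate $|\widetilde{\xi}_1 \cdots \widetilde{\xi}_k (\Delta/4)^m \rho_\epsilon| \le C_\epsilon \rho_{\epsilon'}$. A crude $L^\infty$ bound on the left side provides no Gaussian decay at all, so one must track carefully how the polynomial correction factors from iterated differentiation interact with the underlying Gaussian tail of $\rho_\epsilon$. Once this derivative bound is secured, the remainder of the proof is essentially bookkeeping with the triangle inequality.
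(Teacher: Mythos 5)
The paper's own proof is a one-line citation: the statement is a special case of Theorem IV.3.1 of \cite{purplebook}, after reducing to $\xi_i \in V_1$ (via H\"ormander's condition, every left-invariant field is a combination of commutators of fields from $V_1$) and translating notation. That cited theorem already contains the full content of \eqref{heat-kernel-deriv-eq}, including the supremum over the ball and the time derivatives. Your reduction scheme --- converting $\frac{d^m}{ds^m}$ to $(\Delta/4)^m$ via the heat equation, writing $\rho_s = \rho_{s-\epsilon} \ast \rho_\epsilon$ and pushing the left-invariant operators onto the second factor (which is the correct factor for the convention $(f \ast g)(x) = \int f(z) g(z^{-1}x)\,dz$), convolving the resulting pointwise bound, and then trading $\rho_{s'}(y\cdot x)$ for $C\rho_t(y)$ using the triangle inequality $d(e,yx) \ge d(e,y) - r$ together with the Gaussian upper and lower bounds of Theorem \ref{heat-kernel-upper-lower} --- is all sound bookkeeping, and it correctly isolates why $s' < t$ is what makes the exponents compare.

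The genuine gap is exactly the step you flag as ``the main obstacle'': the pointwise estimate $|\widetilde{\xi}_1 \cdots \widetilde{\xi}_k (\Delta/4)^m \rho_\epsilon(u)| \le C_\epsilon \rho_{\epsilon'}(u)$. This is not a routine consequence of ``standard theory'' in the way you describe: $\rho_\epsilon$ is a hypoelliptic heat kernel, not an explicit Gaussian, so there are no ``polynomial correction factors arising from differentiating a Gaussian'' to absorb --- that heuristic is only valid in the elliptic/Euclidean case of Remark \ref{euclidean-remark}. In the sub-Riemannian setting, pointwise Gaussian bounds on iterated derivatives of $\rho_\epsilon$ are themselves the hard analytic content (proved in \cite{purplebook}, Ch.~IV, by entirely different means), and your needed input is precisely the $r=0$, $x=e$ instance of the theorem you are proving. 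So as written the argument reduces the theorem to its own core case and waves at the literature for that case. The proof closes if you replace the heuristic by a citation to a precise derivative estimate (e.g.\ the Gaussian bounds on $\widetilde{\xi}_I \rho_s$ in \cite{purplebook}) --- but at that point you are citing the same source the paper cites, for a statement that already implies the full theorem without your intermediate reductions.
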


\begin{proof}
  This is a special case of Theorem IV.3.1 of \cite{purplebook}.  To
  reduce their statement to ours, note first that it suffices to
  assume the $\xi_i$ are all in $V_1$ (since, assuming H\"ormander's
  condition, any other left-invariant vector field may be written as a
  linear combination of commutators of vector fields from $V_1$).  We
  can also assume without loss of generality that the $\xi_i$ are
  orthonormal.  Then, in their notation, take $R = 1$, $\alpha = s$,
  $\beta = t$, and $\delta = r$.
\end{proof}

\begin{lemma}\label{hk-ratio-Lp}
  Let $s > 0$.
  \begin{enumerate}[(a)]
  \item \label{forall-t-exists-p} For every $t > s$ there exists $p >
    1$ such that $\rho_t / \rho_s \in L^p(\rho_s)$.  
  \item \label{forall-p-exists-t} For every $p \ge 1$ there exists $t > s$ such that $\rho_t /
    \rho_s \in L^p(\rho_s)$.
  \end{enumerate}
\end{lemma}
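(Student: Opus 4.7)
The plan is to reduce both parts of the lemma to a single Gaussian-type estimate using the two-sided heat kernel bounds from Theorem \ref{heat-kernel-upper-lower}. Writing out the norm, for $p \ge 1$ we have
\begin{equation*}
\int_G \left(\frac{\rho_t}{\rho_s}\right)^p \rho_s\,dm = \int_G \rho_t^p\, \rho_s^{1-p}\,dm,
\end{equation*}
so the goal is to produce an integrable majorant for $\rho_t^p\, \rho_s^{1-p}$. Since $1-p \le 0$, I would apply the upper bound of Theorem \ref{heat-kernel-upper-lower} to $\rho_t$ and the lower bound to $\rho_s$ (raised to the positive power $p-1$), obtaining a constant $C = C(p,s,t,\epsilon)$ such that
\begin{equation*}
\rho_t(x)^p\, \rho_s(x)^{1-p} \le C \exp\!\left(-\alpha_\epsilon\, d(e,x)^2\right), \qquad \alpha_\epsilon := \frac{p}{(1+\epsilon)t} - \frac{p-1}{(1-\epsilon)s}.
\end{equation*}

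The key algebraic step is to choose parameters so that $\alpha_\epsilon > 0$. Taking $\epsilon \to 0$, positivity of $\alpha_\epsilon$ is equivalent to $p/t > (p-1)/s$, i.e.\ $p(t-s) < t$. For part (a), with $t > s$ given, this holds for all $p$ in the open interval $(1, t/(t-s))$, which is nonempty since $t/(t-s) > 1$. For part (b), with $p \ge 1$ given, the condition rearranges to $t < ps/(p-1)$ when $p > 1$ (and is automatic when $p=1$); since $ps/(p-1) > s$, the interval $(s,\, ps/(p-1))$ is nonempty and any $t$ in it works. In either case, fixing a valid pair $(p,t)$ and then choosing $\epsilon > 0$ sufficiently small preserves $\alpha_\epsilon > 0$ by continuity.

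It then remains to verify that $\int_G e^{-\alpha d(e,x)^2}\,dm < \infty$ for any $\alpha > 0$. This is where volume growth of Carnot--Carath\'eodory balls enters: by the homogeneity $m(\delta_\lambda A) = |\lambda|^{2D} m(A)$ from \eqref{m-dilate} together with the distance scaling $d(e, \delta_\lambda x) = |\lambda| d(e,x)$, the balls satisfy $m(B(e,r)) \le C r^{2D}$ for $r \ge 1$. Applying the layer cake representation,
\begin{equation*}
\int_G e^{-\alpha d(e,x)^2}\,dm = \int_0^\infty 2\alpha r\, e^{-\alpha r^2}\, m(B(e,r))\,dr,
\end{equation*}
and the right-hand side is finite because it is controlled by a polynomial-times-Gaussian integral on $[1,\infty)$ and is bounded by the constant $m(B(e,1))$ on $[0,1]$.

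I expect no real obstacles: everything follows mechanically once the two-sided Gaussian bounds are inserted and the exponent is analyzed. The only point requiring care is the arithmetic verification that $\alpha_\epsilon > 0$ can indeed be arranged in the specified ranges of $p$ and $t$, and that the resulting Gaussian integral over $G$ converges, which relies on the polynomial (rather than exponential) volume growth of sub-Riemannian balls in the stratified setting.
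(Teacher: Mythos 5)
Your proposal is correct and follows essentially the same route as the paper: both apply the two-sided Gaussian bounds of Theorem \ref{heat-kernel-upper-lower} to dominate $\rho_t^p\rho_s^{1-p}$ by $C\exp(-\alpha_\epsilon d(e,x)^2)$ with $\alpha_\epsilon = \frac{p}{(1+\epsilon)t}-\frac{p-1}{(1-\epsilon)s}$, and then arrange $\alpha_\epsilon>0$ by tuning $p$ (part (a)) or $t$ (part (b)) together with a small $\epsilon$. Your explicit layer-cake verification of $\int_G e^{-\alpha d(e,x)^2}\,dm<\infty$ via the homogeneity $m(B(e,r))=r^{2D}m(B(e,1))$ is a slightly more detailed version of the paper's appeal to the norm equivalence \eqref{d-scaling}.
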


\begin{proof}
Let $\epsilon > 0$.
By Theorem \ref{heat-kernel-upper-lower}, for any $0 < s < t$, any $p
> 1$, and any
$\epsilon > 0$ we may find a constant $C(s,t,\epsilon)$ such that
\begin{align*}
  \left|\frac{\rho_t(x)}{\rho_s(x)}\right|^p \rho_s(x) &= \frac{\rho_t(x)^p}{\rho_s(x)^{p-1}}
   \\
  &\le C(s,t,\epsilon) \exp\left(-\left(\frac{p}{(1+\epsilon)t} -
  \frac{p-1}{(1-\epsilon)s}\right) d(e,x)^2\right)
\end{align*}
where the $m(B(e, \sqrt{\cdot}))$ factors have been absorbed into
$C(s,t,\epsilon)$.  Let $A = A(p,s,t,\epsilon) = \left(\frac{p}{(1+\epsilon)t} -
  \frac{p-1}{(1-\epsilon)s}\right)$ be the bracketed quantity in the
  exponent; if $A > 0$ then by (\ref{d-scaling}) the right side will
  be integrable with respect to $m$, implying the desired conclusion.

For (\ref{forall-t-exists-p}), suppose $t > s$ are given.  Fix any $\epsilon \in
(0,1)$.  As $p \downarrow 1$ we have $A \to \frac{1}{(1+\epsilon)t} >
0$, so for any $p$ sufficiently close to $1$ we get $A > 0$ and hence
$\rho_t / \rho_s \in L^p(\rho_s)$.

For (\ref{forall-p-exists-t}), suppose $s > 0$ and $p \ge 1$ are
given.  Without loss of generality we can assume $p > 1$ (since $L^1(\rho_s)
\supset L^p(\rho_s)$ for any $p > 1$).  Choose $t$ with $s < t < \frac{p}{p-1}
s$.  Then as $\epsilon \downarrow 0$ we have $A \to \frac{p}{t} -
\frac{p-1}{s} > 0$, so for any sufficiently small $\epsilon$ we get $A
> 0$.
\end{proof}

\begin{lemma}\label{log-drho}
  For any $\xi \in \mathfrak{g}$ and any $s>0$ we have $\widetilde{\xi} \log \rho_s
  \in \bigcap_{p \ge 1} L^p(\rho_s)$. 
\end{lemma}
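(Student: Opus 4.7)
The plan is to write $\widetilde{\xi} \log \rho_s = (\widetilde{\xi}\rho_s)/\rho_s$, which is well-defined everywhere since $\rho_s > 0$. Then for any $p \ge 1$,
\[
\|\widetilde{\xi}\log\rho_s\|_{L^p(\rho_s)}^p = \int_G \frac{|\widetilde{\xi}\rho_s(y)|^p}{\rho_s(y)^{p-1}}\,dm(y),
\]
so it suffices to bound the numerator $|\widetilde{\xi}\rho_s|$ by a multiple of some larger-time heat kernel $\rho_t$, and then invoke Lemma \ref{hk-ratio-Lp}.

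More precisely, I would first apply Theorem \ref{heat-kernel-deriv} with $k=1$, $m=0$, $r = 0$ (or any convenient positive $r$; the point is to take the supremum over a trivial or small ball around the identity and then evaluate at $x=e$), to obtain: for every $t > s$ there is a constant $C = C(s,t,\xi)$ such that
\[
|\widetilde{\xi}\rho_s(y)| \le C\,\rho_t(y) \qquad \text{for all } y \in G.
\]
Substituting this into the integral above gives
\[
\|\widetilde{\xi}\log\rho_s\|_{L^p(\rho_s)}^p \le C^p \int_G \left(\frac{\rho_t(y)}{\rho_s(y)}\right)^{p}\rho_s(y)\,dm(y) = C^p \left\|\frac{\rho_t}{\rho_s}\right\|_{L^p(\rho_s)}^p.
\]

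Finally, for the given $p \ge 1$, part (\ref{forall-p-exists-t}) of Lemma \ref{hk-ratio-Lp} furnishes some $t > s$ for which $\rho_t/\rho_s \in L^p(\rho_s)$, and hence the right-hand side is finite. Since $p \ge 1$ was arbitrary, this proves $\widetilde{\xi}\log\rho_s \in \bigcap_{p\ge 1}L^p(\rho_s)$. There is no real obstacle here: the only mildly delicate point is to recognize that the estimate from Theorem \ref{heat-kernel-deriv} yields a pointwise bound valid on all of $G$ (by specializing $x=e$ in its statement), after which the matching of exponents is handled automatically by Lemma \ref{hk-ratio-Lp}(\ref{forall-p-exists-t}).
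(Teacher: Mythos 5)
Your proposal is correct and follows essentially the same route as the paper: write $\widetilde{\xi}\log\rho_s = (\widetilde{\xi}\rho_s)/\rho_s$, bound $|\widetilde{\xi}\rho_s| \le C\rho_t$ via Theorem \ref{heat-kernel-deriv} with $x=e$, and then choose $t>s$ using Lemma \ref{hk-ratio-Lp}(\ref{forall-p-exists-t}) so that $\rho_t/\rho_s \in L^p(\rho_s)$. The only difference is the order in which $t$ is fixed, which is immaterial.
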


\begin{proof}
  Fix $p \ge 1$.  By Lemma \ref{hk-ratio-Lp}(\ref{forall-p-exists-t})
  we can choose $t > s$ such that $\rho_t/\rho_s \in L^p(\rho_s)$.
  Then by Theorem \ref{heat-kernel-deriv}, taking any $r>0$ and $x=e$,
  there is a constant $C$ such that $\tilde{\xi} \rho_s \le C
  \rho_t$.  As such, by the chain rule we have
  \begin{equation*}
    \widetilde{\xi} \log \rho_s = \frac{\widetilde{\xi}
      \rho_s}{\rho_s} \le \frac{\rho_t}{\rho_s} \in L^p(\rho_s).
  \end{equation*}
\end{proof}

\begin{lemma}
  The heat kernel $\rho_s$ obeys the scaling relation
  \begin{equation}\label{rho-dilate}
    \rho_s(\delta_\lambda(y)) = |\lambda|^{-2D} \rho_{s|\lambda|^{-2}}(y).
  \end{equation}
\end{lemma}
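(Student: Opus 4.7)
The plan is to derive this from the two pieces of structure we already have: the convolution representation $e^{s\Delta/4}f = f \ast \rho_s$, the semigroup scaling identity \eqref{semigroup-dilate}, and the Lebesgue measure scaling \eqref{m-dilate-integrate}. The idea is to compute $e^{s\Delta/4}(f\circ\delta_\lambda)$ in two different ways and compare, for every sufficiently nice test function $f$.

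First I would apply the convolution formula from Notation \ref{n.d2.16} directly, writing
\begin{equation*}
  (e^{s\Delta/4}(f\circ\delta_\lambda))(x) = \int_G f(\delta_\lambda(x)\,\delta_\lambda(y^{-1}))\,\rho_s(y)\,dy,
\end{equation*}
where I have used that $\delta_\lambda$ is a group homomorphism (so $\delta_\lambda(xy^{-1}) = \delta_\lambda(x)\delta_\lambda(y)^{-1}$). Substituting $z = \delta_\lambda(y)$ and invoking \eqref{m-dilate-integrate} with $\lambda^{-1}$ in place of $\lambda$ yields a Jacobian factor $|\lambda|^{-2D}$, so the expression becomes
\begin{equation*}
  |\lambda|^{-2D}\int_G f(\delta_\lambda(x)\,z^{-1})\,\rho_s(\delta_{\lambda^{-1}}(z))\,dz.
\end{equation*}

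On the other hand, by \eqref{semigroup-dilate} the same quantity equals $(e^{s|\lambda|^2\Delta/4}f)(\delta_\lambda(x))$, which by the convolution representation for the rescaled time is
\begin{equation*}
  \int_G f(\delta_\lambda(x)\,z^{-1})\,\rho_{s|\lambda|^2}(z)\,dz.
\end{equation*}
Since these two integrals agree for every test function $f$ (and both kernels are continuous), the integrands must coincide: $|\lambda|^{-2D}\rho_s(\delta_{\lambda^{-1}}(z)) = \rho_{s|\lambda|^2}(z)$. Substituting $\mu = \lambda^{-1}$ and relabeling gives exactly \eqref{rho-dilate}.

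There is no real obstacle here beyond bookkeeping; the only subtleties are (i) remembering that $\delta_\lambda$ is a group homomorphism so it distributes across the convolution variable, (ii) correctly applying \eqref{m-dilate-integrate} (with $\lambda^{-1}$ rather than $\lambda$) to pick up the factor $|\lambda|^{-2D}$, and (iii) justifying the pointwise identification of the two convolution kernels, which is immediate from smoothness of $\rho_s$ (guaranteed by the discussion at the start of Section \ref{subsec:heatkernel}) combined with the fact that the equality of the convolutions holds for all, say, $f \in C_c^\infty(G)$.
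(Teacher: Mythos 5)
Your argument is correct and is exactly the route the paper takes: its one-line proof simply cites the semigroup scaling \eqref{semigroup-dilate} and the Haar measure scaling \eqref{m-dilate}, and your computation is the natural fleshing-out of that, with the bookkeeping (the $|\lambda|^{-2D}$ Jacobian, the homomorphism property of $\delta_\lambda$, and the identification of kernels via continuity) all handled properly. Nothing to add.
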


\begin{proof}
  This follows from the corresponding scaling properties of the
  semigroup $e^{s\Delta/4}$ (\ref{semigroup-dilate}) and of the Haar
  measure $m$ (\ref{m-dilate}).
\end{proof}

\section{Dirichlet forms and operators}\label{sec:forms}

For the rest of the paper, fix some $a > 0$.  Henceforward $L^p$ by
  itself will, unless otherwise specified, refer to $L^p(\rho_a)$. 

\begin{notation}\label{Q-A-notation}
  Let $Q_0$ be the positive quadratic form on $L^2(\rho_a)$ defined on the
  domain $C_c^\infty(G)$ by
  \begin{equation}
    Q_0(f_1, f_2) = \int_G h(df_1, d\bar{f}_2)\, \rho_a\,dz = \int_G g(\nabla f_1,
    \nabla \bar{f}_2)\, \rho_a\,dz
  \end{equation}
  and let $Q$ be its closure, with domain $\mathcal{D}(Q)$, so that
  $(Q, \mathcal{D}(Q))$ is a Dirichlet form on $L^2(\rho_a)$.  Note
  that $\mathcal{D}(Q)$ is a Hilbert space under the \defword{energy norm}
  $(f,g)_Q = (f,g)_{L^2(\rho_a)} + Q(f,g)$.  Let
  $(A, \mathcal{D}(A))$ be the generator of $Q$, i.e., $A$ is the
  unique self-adjoint operator on $L^2(\rho_a)$ having domain
  $\mathcal{D}(A) \subset \mathcal{D}(Q)$ and satisfying $\int_G (Af_1)
  \bar{f}_2 \,\rho_a\,dz = Q(f_1,f_2)$ for all $f_1 \in \mathcal{D}(A)$, $f_2
  \in \mathcal{D}(Q)$.  
\end{notation}

On smooth functions $f \in \mathcal{D}(A) \cap C^\infty(G)$, integration by parts gives
\begin{equation}\label{A-smooth}
  A f = d^* d f = -\Delta f - g(\nabla f, \nabla \log \rho_a) =
  -\Delta f - h(df, d \log \rho_a).
\end{equation}    

{\nate The operator $A = d^* d$ can be seen as an analogue of the
Ornstein--Uhlenbeck operator in this noncommutative Lie group
setting.  Such operators have attracted substantial interest in the
literature, including the study of functional inequalities such as
Poincar\'e inequalities.  Papers which study these operators (in the
setting of real Lie groups) include
\cite{baudoin-hairer-teichmann, lust-piquard-ornstein-uhlenbeck,
  melcher08}.}

\begin{remark}\label{euclidean-remark}
  When $\mathfrak{g}$ is abelian (i.e. the Lie bracket is 0) then $G$
  is Euclidean space $\mathbb{C}^n$ (with its usual additive group
  structure).  If we take $h$ to be the usual positive definite
  Euclidean inner product, then everything reduces to the Euclidean
  case: $\nabla$ and $\Delta$ are the usual gradient and Laplacian,
  $d$ is Euclidean distance, $\rho_s$ is the Gaussian heat kernel
  $\rho_s(z) = (\pi s)^{-n} e^{-|z|^2/s}$, and
  $A$ is the Ornstein--Uhlenbeck operator.
\end{remark}

\begin{definition}\label{holomorphic-def}
  We will say that $A$ is a \defword{holomorphic} operator if it maps
  holomorphic functions to holomorphic functions;
  i.e. $A(\mathcal{D}(A) \cap \mathcal{H}) \subset \mathcal{H}$.
\end{definition}

In our setting, the operator $A$ is \emph{not} holomorphic (except in
the abelian case $G = \mathbb{C}^n$); see Theorem
\ref{A-not-holomorphic} below.  So our setting
stands in contrast to that of \cite{gross-hypercontractivity-complex},
in which most of the main results were proved under the hypothesis
that the operator $A$ should be holomorphic.  

Since the phenomenon of strong hypercontractivity is quite specific to
the holomorphic category, it is not reasonable to expect it to hold
for an operator that does not preserve holomorphicity.
As such, our
main object of study will not be $A$ itself, but rather the operator
$B$ defined as follows.

\begin{notation}
  The restriction $Q|_{\mathcal{H}}$ of $Q$ to the domain
  $\mathcal{D}(Q) \cap \mathcal{H}$ is a positive closed quadratic
  form on the Hilbert space $\mathcal{H} \cap L^2(\rho_a)$.  Let $(B,
  \mathcal{D}(B))$ be its generator, so that $B$ is a self-adjoint
  operator on $\mathcal{H} \cap L^2(\rho_a)$.
\end{notation}

We intend to think of $B$ as the ``holomorphic projection'' of the
operator $A$.  In Section \ref{sec:poly}, we shall discuss the
precise sense in which this is true.  For now, let us observe that
\begin{equation}\label{DAcapH-subset-DB}
  \mathcal{D}(A) \cap \mathcal{H} \subset \mathcal{D}(B).
\end{equation}
To see this, note that for $f \in \mathcal{D}(A) \cap \mathcal{H}
\subset \mathcal{D}(Q) \cap \mathcal{H}$ and
$g \in \mathcal{D}(Q) \cap \mathcal{H}$, we have $|Q(f,g)| =
\left|(Af, g)_{L^2}\right| \le \|Af\|_{L^2} \|g\|_{L^2}$, and so $f$
is in the domain of the generator of $Q|_{\mathcal{H}}$, namely $B$.

\section{Density properties of holomorphic polynomials}\label{sec:poly}

\begin{notation}{$\mathcal{H}$ will denote the set of holomorphic functions on $G$.
}
\end{notation}

\begin{theorem}\label{poly-density}
 \mbox{}
\begin{enumerate}[(a)]
\item $\mathcal{P}$ is dense in $\mathcal{H}\cap L^{p}(\rho_{a})$ for
$1\leq p<\infty$. \label{P-dense}

\item $\mathcal{P}\subset\mathcal{D}(Q)$ and is a core for
  $Q|_{\mathcal{H}}$. \label{P-core-Q}  In particular, from
  (\ref{P-dense}), $Q|_{\mathcal{H}}$ is densely defined in
  $\mathcal{H} \cap L^2(\rho_a)$.

\item If $j\neq k$ then $\mathcal{P}_{j}\perp\mathcal{P}_{k}$ in both
$L^{2}(\rho_{a})$ and in energy norm. \label{P-perp}

\item $\mathcal{H}\cap L^{2}(\rho_{a})=\bigoplus_{k=0}^{\infty}%
\mathcal{P}_{k}$. \label{L2-P-decomp}

\item $\mathcal{H}\cap\mathcal{D}(Q)=\bigoplus_{k=0}^{\infty}\mathcal{P}_{k}$
(convergence in energy norm) \label{Q-P-decomp}

\item $\mathcal{P}\subset\mathcal{D}(B)$ and is a core for $B$. \label{P-core-B}
\end{enumerate}
\end{theorem}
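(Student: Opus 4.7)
The plan is to exploit the circle action $\{\delta_{e^{i\theta}}\}_{\theta \in [0, 2\pi)}$ on $G$, which preserves both $\rho_a\, dm$ (by \eqref{m-dilate} and \eqref{rho-dilate}) and the Dirichlet form $Q$ (by \eqref{X-dilation}, $(\delta_{e^{i\theta}})_*$ rotates each orthonormal pair $(X_j, Y_j)$, hence acts as a sub-Riemannian isometry). I introduce the Fourier projections
\begin{equation*}
P_k f(z) := \frac{1}{2\pi}\int_0^{2\pi} e^{-ik\theta} f(\delta_{e^{i\theta}} z)\, d\theta,
\end{equation*}
which, by Minkowski's integral inequality and the invariances above, are bounded idempotents on $L^2(\rho_a)$; self-adjointness follows from the change of variable $\theta \mapsto -\theta$ combined with invariance. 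Expanding $f(\delta_\lambda z) = \sum_k \lambda^k f_k(z)$ at $\lambda = e^{i\theta}$ via Lemma \ref{l.d2.9} shows $P_k f = f_k$ pointwise for holomorphic $f$, so $\operatorname{Range}(P_k) = \mathcal{P}_k$. Part (c) is immediate from $U_\theta$-invariance: for $f \in \mathcal{P}_j$ and $g \in \mathcal{P}_k$, both $\langle f, g\rangle_{L^2}$ and $Q(f, g)$ equal $e^{i(j-k)\theta}$ times themselves, so they vanish for $j \ne k$. For (d), if $f \in \mathcal{H} \cap L^2(\rho_a)$ has $P_k f = f_k = 0$ for every $k$, then by Lemma \ref{l.d2.9}, $f(z) = \sum_k f_k(z) = 0$; thus $\{P_k\}_{k \ge 0}$ is a complete orthogonal family.

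For (a) at $p=2$, the partial sums $\sum_{k=0}^N f_k \in \mathcal{P}$ converge to $f$ by (d). For general $p \ge 1$, I approximate $f$ by $f_r := f \circ \delta_r$ with $r \in (0,1)$ in two stages. \emph{Stage 1:} Cauchy's formula on $|\lambda|=\rho$ with $r < \rho < 1$ gives the pointwise tail bound $|\sum_{k > N} r^k f_k(z)| \le C(r,\rho,N)\max_{|\lambda|=\rho}|f(\delta_\lambda z)|$ with $C(r,\rho,N)\to 0$; a Poisson/subharmonic bound in $\lambda$ dominates the maximum by an integral average over a slightly larger circle $|\lambda|=\rho'$ with $\rho < \rho' < 1$, and the identity $\int |f \circ \delta_\mu|^p\, \rho_a\, dm = \int |f|^p\, \rho_{a|\mu|^2}\, dm$ (from \eqref{m-dilate-integrate} and \eqref{rho-dilate}) converts this to a bound by $\|f\|_{L^p(\rho_{a(\rho')^2})}$, which is finite for $\rho' < 1$ since $\rho_{a(\rho')^2}/\rho_a$ is bounded by Theorem \ref{heat-kernel-upper-lower}. \emph{Stage 2:} the key observation is that $|f|^p$ is $\Delta$-subharmonic for $p \ge 1$. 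Using $\Delta f = \Delta \bar f = 0$ (from $\bar{Z}_j f = Z_j \bar f = 0$ together with $[X_j, Y_j] = J[X_j, X_j] = 0$), a direct computation gives $\Delta |f|^p = p^2 |f|^{p-2} \sum_j |X_j f|^2 \ge 0$. Since $\rho_s$ is symmetric, $\int |f|^p \rho_s\, dm = (e^{s\Delta/4}|f|^p)(e)$ is nondecreasing in $s$; combined with Fatou, this forces $\|f_r\|_{L^p(\rho_a)}^p = \int |f|^p \rho_{ar^2}\, dm \nearrow \|f\|_{L^p(\rho_a)}^p$. Pointwise convergence $f_r \to f$ together with the Brezis--Lieb lemma then upgrades the norm convergence to $L^p(\rho_a)$ convergence.

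For (b) and (e), I first verify $\mathcal{P} \subset \mathcal{D}(Q)$ by multiplying $p \in \mathcal{P}$ by cutoffs $\chi_n \in C_c^\infty(G)$ equal to $1$ on large $d$-balls with $|\nabla \chi_n|$ uniformly bounded; dominated convergence (polynomial growth of $p, \nabla p$ against the Gaussian tail of $\rho_a$ in Theorem \ref{heat-kernel-upper-lower}) yields $\chi_n p \to p$ in energy norm. Since $Q$ is $U_\theta$-invariant, $P_k$ extends to an orthogonal projection on the energy Hilbert space $\mathcal{D}(Q) \cap \mathcal{H}$ with range still $\mathcal{P}_k$. For $f \in \mathcal{H} \cap \mathcal{D}(Q)$, the partial sums $\sum_{k=0}^N P_k f$ converge in energy norm to some $g \in \overline{\bigoplus_k \mathcal{P}_k}^{\|\cdot\|_Q}$; since energy convergence implies $L^2$ convergence and by (d) the same sum also tends to $f$ in $L^2$, we must have $g = f$. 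This establishes (e), whence (b) is immediate.

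Finally, for (f): for $f \in \mathcal{P}_k$ and any $g \in \mathcal{D}(Q) \cap \mathcal{H}$, the $Q$-invariance yields $Q(f, g) = Q(f, P_k g)$, and since $\mathcal{P}_k$ is finite-dimensional (Corollary \ref{Pk-finite-dim}), $|Q(f, P_k g)| \le \|f\|_Q \|P_k g\|_Q \le C_k \|f\|_Q \|P_k g\|_{L^2} \le C_k \|f\|_Q \|g\|_{L^2}$; thus $f \in \mathcal{D}(B)$, with $Bf \in \mathcal{P}_k$. Hence $B + I$ maps $\mathcal{P}_k$ bijectively onto itself (injectivity from positivity of $B$), so $(B + I)(\mathcal{P}) = \mathcal{P}$, which is dense in $\mathcal{H} \cap L^2(\rho_a)$ by (a); this density is equivalent to $\mathcal{P}$ being an operator core for $B$. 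The main obstacle is Stage 2 of (a): naive heat-kernel comparisons fail because the constants in Theorem \ref{heat-kernel-upper-lower} do not match well enough to control $\rho_{ar^2}/\rho_a$ uniformly as $r \to 1^-$, and the essential tool is the subharmonicity identity $\Delta |f|^p = p^2 |f|^{p-2} \sum_j |X_j f|^2 \ge 0$, which depends crucially on $[X_j, Y_j] = 0$, i.e., on the complex-Lie-algebra structure of $\mathfrak{g}$.
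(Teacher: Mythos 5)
Your overall strategy---exploiting the circle action $\theta\mapsto V_\theta f=f\circ\delta_{e^{i\theta}}$, which preserves $\rho_a\,dm$ and $Q$---is exactly the paper's, and for parts (b)--(f) your argument is essentially a reorganization of theirs: the paper also proves (c) from unitarity of $V_\theta$, also gets $\mathcal{P}\subset\mathcal{D}(Q)$ by cutoffs, and also proves (f) using finite-dimensionality of $\mathcal{P}_k$ to bound $|Q(g,f)|=|Q(g,f_n)|\le C_n Q(g)^{1/2}\|f\|_{L^2}$. The cosmetic difference is that you use the sharp Fourier projections $P_k$ and orthogonal partial sums where the paper uses Fej\'er means $g_n=\int F_n(\theta)V_\theta f\,d\theta$; on the Hilbert spaces $L^2$ and $(\mathcal{D}(Q)\cap\mathcal{H},\|\cdot\|_Q)$ these are interchangeable, and your resolvent-range criterion for the core in (f) ($(B+I)\mathcal{P}=\mathcal{P}$ dense) is a clean alternative to the paper's graph-norm approximation. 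The genuine divergence is part (a) for $p\ne 2$: the paper stays with Fej\'er means, proves strong continuity of $\theta\mapsto V_\theta$ on $L^p(\rho_a)$ for $1\le p<\infty$ by approximating with bounded continuous functions, and concludes $\|f-g_n\|_{L^p}\le\int F_n(\theta)\|f-V_\theta f\|_{L^p}\,d\theta\to0$ by Minkowski's integral inequality. Because rotations preserve $\rho_a\,dm$ exactly, no comparison between different heat kernels and no subharmonicity is ever needed. Your Abel-summation route along the dilations $\delta_r$ is workable but buys you considerably more technical overhead for the same conclusion.

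That overhead is where the one real gap sits. In Stage 2 you assert that $s\mapsto\int_G|f|^p\rho_s\,dm=(e^{s\Delta/4}|f|^p)(e)$ is nondecreasing because $\Delta|f|^p\ge0$. Two problems: first, the pointwise identity $\Delta|f|^p=p^2|f|^{p-2}\sum_j|Z_jf|^2$ degenerates at zeros of $f$ when $p<2$ (this is precisely why the paper introduces the regularizing functions $u_n$ in Theorem \ref{contractive}); second, and more seriously, turning $\Delta|f|^p\ge0$ into monotonicity of $\int|f|^p\rho_s\,dm$ requires the integration by parts of Lemma \ref{ds-Delta-X}, which is proved only for the polynomial-growth class $C^2_p(G)$. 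A general $f\in\mathcal{H}\cap L^p(\rho_a)$ has no polynomial growth bound, and the paper's own extension of the contraction $\|f\circ\delta_{e^{-t}}\|_{L^p}\le\|f\|_{L^p}$ from $\mathcal{P}$ to all of $\mathcal{H}L^p$ (Theorem \ref{contractive}) goes \emph{through} the density of $\mathcal{P}$ --- the very statement you are proving --- so you cannot borrow it without circularity. The gap is repairable without the sub-Laplacian at all: $\lambda\mapsto f(\delta_\lambda z)$ is entire, so $|f(\delta_\lambda z)|^p$ is subharmonic in the single complex variable $\lambda$, its circular means $\frac{1}{2\pi}\int_0^{2\pi}|f(\delta_{re^{i\theta}}z)|^p\,d\theta$ are nondecreasing in $r$ for each fixed $z$, and integrating against $\rho_a(z)\,dm(z)$ (Tonelli, plus rotation invariance of $\rho_a\,dm$) gives $\int|f|^p\rho_{ar^2}\,dm$ nondecreasing in $r$ with no growth hypotheses. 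With that substitution your Fatou/Brezis--Lieb conclusion of Stage 2 goes through; as written, however, the monotonicity step is not justified for the functions to which you apply it.
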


{\nate
  \begin{remark}
  It is interesting to contrast Theorem \ref{poly-density} with
  \cite[Proposition 8]{lust-piquard-ornstein-uhlenbeck} (credited to
  W.~Hebisch), in which it is shown that the result is typically false if we
  drop the word ``holomorphic''.  Specifically, when $G$ is a
  (real) stratified Lie group, the (not necessarily holomorphic)
  polynomials are dense in $L^2(\rho_a)$ if and only if $G$ has step
  at most 4.  
  \end{remark}
}

\begin{proof}
The proofs are slight variants of the proof of \cite[Lemma
  5.4]{gross-hypercontractivity-complex}.

For (\ref{P-dense}), to begin, it follows from the upper bound in Theorem
\ref{heat-kernel-upper-lower}, using polar coordinates and the homogeneity
of $d$, that $\mathcal{P} \subset  L^p(\rho_a)$.

Let
\begin{align}
F_{n}(\theta) &  =\frac{1}{2\pi n}\ \sum_{k=0}^{n-1}\ \sum_{j=-k}%
^{k}e^{ij\theta}\nonumber\\
&  =\frac{1}{2\pi n}\ \frac{\sin^{2}(n\theta/2)}{\sin^{2}(\theta
/2)}\label{e.d2.16}%
\end{align}
denote Fejer's kernel \cite[\S{}13.31]{titchmarsh-book}. We observe that
\begin{align}
  &\int_{-\pi}^{\pi} F_n(\theta)\,d\theta = 1 \\
  &\int_{-\pi}^{\pi} F_n(\theta) e^{i \ell \theta}\,d\theta = 0, &&
  \ell \ge n  \label{Fn-zero} \\
  &\lim_{n \to \infty} \int_{-\pi}^{\pi} F_n(\theta)
  \varphi(\theta)\,d\theta = \varphi(0), && \varphi \in C([-\pi, \pi]). 
\end{align}
Define 
$$V_\theta f := f \circ \delta_{e^{i\theta}}$$
for any function $f$ on $G$. If $f\in\mathcal{H}$ and is written $f =
\sum_{k=0}^\infty f_k$ as in (\ref{e.d2.14}), with $f_k \in \mathcal{P}_k$, then
\begin{equation}\label{Vtheta-decomp}
(V_{\theta}f)(z)=\sum_{k=0}^{\infty}e^{ik\theta}f_{k}(z).
\end{equation}
The convergence is uniform on $\theta\in\lbrack-\pi,\pi]$ for each $z\in G$
because the function $\theta\mapsto f(\delta_{e^{i\theta}}z)$ is smooth and
periodic with period $2\pi$. Now let
\begin{equation}
g_{n}(z):=\int_{-\pi}^{\pi}F_{n}(\theta)(V_{\theta}f)(z)\,d\theta. \label{e.d2.17}%
\end{equation}
Using (\ref{Vtheta-decomp}), Fubini's theorem, and (\ref{Fn-zero}), we
see that $g_n$  is a linear combination of
$f_{0},f_{1},\ldots,f_{n-1}$ and is therefore in $\mathcal{P}$. (We
can justify the application of Fubini's theorem using the fact that
$\sum_{k=0}^\infty f_k(z)$ is the Taylor series for $u(\lambda)$, as
defined in (\ref{u-def}), at $\lambda = 1$, and therefore converges
absolutely.)  Since the map $\delta_{e^{i\theta}}:G\rightarrow G$
preserves the measure $\rho_{a}(x)dx$ (see (\ref{m-dilate}),
(\ref{rho-dilate})), the operators $V_{\theta}$ are isometries in
$L^{p}(G,\rho_{a}(x)dx)$ for $0<p<\infty$.  Moreover, the map
$\theta\mapsto V_{\theta}$ is strongly continuous in $L^{p}(\rho_a)$
for $1\leq p<\infty$: for bounded continuous $f : G \to \mathbb{R}$,
dominated convergence gives $V_\theta f \to f$ in $L^p(\rho_a)$ as
$\theta \to 0$, and the case of general $f \in L^p(\rho_a)$ follows by
density.

Thus if $1\leq p<\infty$ and $f\in\mathcal{H}\cap
L^{p}(\rho_{a})$ then we have
\begin{equation}\label{Vtheta-computation}
\begin{split}
\Vert f-g_{n}\Vert_{L^{p}} &  =\Big\|\int_{-\pi}^{\pi}F_{n}(\theta
)(f-V_{\theta}f)\,d\theta\Big\|_{L^{p}}\\
&  \leq\int_{-\pi}^{\pi}F_{n}(\theta)\Vert f-V_{\theta}f\Vert_{L^{p}}\,d\theta\\
&  \to 0\quad\text{as}\quad n\rightarrow\infty
\end{split}
\end{equation}
by Minkowski's inequality for integrals.
This proves part (\ref{P-dense}).

To prove part (\ref{P-core-Q}), recall that by Lemma \ref{dilate-poly},
if $f\in\mathcal{P}_{k}$ and $\xi\in V_{1}$ then
$\widetilde{\xi}f\in\mathcal{P}_{k-1} \subset L^2(\rho_a)$.  Hence
$|\nabla f|^{2}$ is in $L^{1}(\rho_{a})$.  Moreover, multiplying $f$
by a sequence $\varphi_{n}$ of cutoff functions in $C_{c}^{\infty}(G)$ which
converge to 1 boundedly and such that
$\widetilde{\xi}\varphi_{n} \to 0$ boundedly, one sees that
$f\in\mathcal{D}(Q)$. So $\mathcal{P}\subset\mathcal{D}(Q)$. By
(\ref{gZ}) and (\ref{Z-dilation}), for any smooth $f$ we have
\begin{equation}
|\nabla(f\circ\delta_{e^{i\theta}})|^{2}(z)=|\nabla f|^{2}(\delta_{e^{i\theta
}}z).
\end{equation}
Since $\rho_{a}(x)dx$ is preserved by the map $\delta_{e^{i\theta}}$ it
follows that
\[
Q(V_{\theta}f)=Q(f)\quad\text{for all}\quad f\in\mathcal{D}(Q)
\]
and in particular for all $f\in\mathcal{H}\cap\mathcal{D}(Q)$. So $V_{\theta}$
is unitary on $\mathcal{H}\cap\mathcal{D}(Q)$ in the energy norm, $[\Vert
f\Vert_{L^{2}}^{2}+Q(f)]^{1/2}$.  Now if
$f\in\mathcal{H}\cap\mathcal{D}(Q)$ and we define the polynomials
$g_{n}$ as in (\ref{e.d2.17}), we can differentiate under the integral
sign to see that 
\begin{equation}
\widetilde{\xi} g_n(z) =
\int_{-\pi}^{\pi}F_{n}(\theta)(\widetilde{\xi} V_{\theta}
f)(z)\,d\theta = \int_{-\pi}^{\pi}F_{n}(\theta)e^{i\theta} (V_{\theta} \widetilde{\xi} 
f)(z)\,d\theta.
\end{equation}  Then, similar to \ref{Vtheta-computation}, we have
\begin{equation}
\begin{split}
  \left\Vert \widetilde{\xi}f - \widetilde{\xi}g_{n}\right\Vert_{L^{2}} 
  &=\left\Vert \int_{-\pi}^{\pi} F_{n}(\theta)
  (\widetilde{\xi} f-e^{i\theta}V_{\theta}\widetilde{\xi} f)
  \,d\theta \right\Vert_{L^{p}}\\ 
  &\leq \int_{-\pi}^{\pi} F_{n}(\theta) 
  \left\Vert \widetilde{\xi} f-e^{i\theta} V_{\theta}\widetilde{\xi}
  f\right\Vert_{L^{p}}\,d\theta \\ 
  &\to 0 \quad\text{as $n\to\infty$}.
 \end{split}
 \end{equation}
 It follows that $g_n \to f$ in energy norm.  Hence
 $\mathcal{P}$ is a core for $Q\mid\mathcal{H}$.

 Now if $f\in\mathcal{P}_{n}$ and $g\in\mathcal{P}_{k}$ then $(V_{\theta
 }f)(z)=e^{in\theta}f(z)$ and $(V_{\theta}g)(z)=e^{ik\theta}g(z)$ by
 (\ref{e.d2.10}). Hence $(f,g)_{L^{2}}=(V_{\theta}f,V_{\theta}g)_{L^{2}%
 }=e^{i(n-k)\theta}(f,g)_{L^{2}}$ for all real $\theta$. So if $n\neq k$ then
 $(f,g)_{L^{2}}=0$. Moreover, $\widetilde{\xi}f\in\mathcal{P}_{n-1}$ and
 $\widetilde{\xi}g\in\mathcal{P}_{k-1}$ if $\xi\in V_{1}$. So if $n\neq k$ then
 $Q(f,g)=0$. This proves part (\ref{P-perp}). Parts (\ref{L2-P-decomp})
 and (\ref{Q-P-decomp}) now follow from parts (\ref{P-dense}),
 (\ref{P-core-Q}), (\ref{P-perp}).

 To prove part (\ref{P-core-B}), assume first that $g\in\mathcal{P}_{n}$. Let $f\in
 \mathcal{H}\cap\mathcal{D}(Q)$. By part (\ref{Q-P-decomp}) we may write $f=\sum_{k=0}^{\infty}f_{k}$ with
 $f_{k}\in\mathcal{P}_{k}$, by part (e), which also yields
 \[
 |Q(g,f)|=|Q(g,f_{n})|\leq Q(g)^{1/2}Q(f_{n})^{1/2}.
 \]
 Since $\mathcal{P}_{n}$ is finite dimensional (Corollary
 \ref{Pk-finite-dim}) there is a constant $C_{n}$ such that
 $Q(f_{n})\leq C_{n}^{2}\Vert f_{n}\Vert_{L^{2}}^2$. Since the
 functions $f_{k}$ are orthogonal in the $L^{2}$ inner product we have
 $\Vert f_{n} \Vert_{L^{2}}^{2}\leq\Vert f\Vert_{L^{2}}^{2}$. Thus
 $|Q(g,f)|\leq Q(g)^{1/2}C_{n}\Vert f\Vert_{L^{2}}$. Hence
 $g\in\mathcal{D}(B)$ and we have shown
 $\mathcal{P}\subset\mathcal{D}(B)$.

 Now suppose that $h\in\mathcal{D}(B)$. Define $h_{n}(z)=\int_{-\pi}^{\pi}%
 F_{n}(\theta)(V_{\theta}h)(z)\,d\theta$. As we have seen, $h_{n}\in\mathcal{P}$.
 We will show that $h_{n}\rightarrow h$ in the graph norm of $B$, using the
 fact that $V_{\theta}$ is unitary in both of the Hilbert spaces $\mathcal{H}%
 L^{2}$ and $\mathcal{H}\cap\mathcal{D}(Q)$. If $g\in\mathcal{H}\cap
 \mathcal{D}(Q)$ then
 \begin{equation}
 (V_{\theta}Bh,g)=(Bh,V_{-\theta}g)=Q(h,V_{-\theta}g)=Q(V_{\theta}h,g).
 \label{e.d2.18}%
 \end{equation}
 Since the left side is continuous in $g$ in the $L^{2}$ norm so is
 $Q(V_{\theta}h,g)$. Hence $V_{\theta}h\in\mathcal{D}(B)$ and
 \begin{equation}
 V_{\theta}Bh=BV_{\theta}h, \qquad h\in\mathcal{D}(B). \label{e.d2.19}%
 \end{equation}
 Although this equality is of interest in itself we will actually use
 (\ref{e.d2.18}) a little differently. Multiply equation (\ref{e.d2.18}) by
 $F_{n}(\theta)$ and integrate over $[-\pi,\pi]$. The integral can be taken
 inside both the $L^{2}$ and energy inner products because $V_{\theta}$ is
 strongly continuous in both spaces. We obtain
 \[
 \left(\int_{-\pi}^{\pi}F_{n}(\theta)V_{\theta}Bh\,d\theta,g\right)=Q(h_{n}%
 ,g)\quad\forall\ g\in\mathcal{H}\cap\mathcal{D}(Q).
 \]
 So
 \[
 \int_{-\pi}^{\pi}F_{n}(\theta)V_{\theta}Bh\,d\theta=Bh_{n}.
 \]
 As $n\rightarrow\infty$ the left side converges to $Bh$ in $L^{2}$ norm. Thus
 $h_{n}\rightarrow h$ and $Bh_{n}\rightarrow Bh$. Hence $\mathcal{P}$ is a core
 for $B$.
 \end{proof}

Let us remark on the requirement $p \ge 1$ in Theorem
\ref{poly-density}(\ref{P-dense}).  Our proof fails for $0 < p < 1$
because the inequality in (\ref{Vtheta-computation}) would go the
wrong way.  

However, in the Euclidean case $G = \mathbb{C}^n$ (see Remark
\ref{euclidean-remark}), where $\rho_a$ is the Gaussian heat kernel,
it is known that in fact $\mathcal{P}$ is dense in $L^p(\rho_a)$ for
$0 < p < 1$.  This is a consequence of a theorem of Wallst\'en
\cite[Theorem 3.1]{wallsten-hankel}, from which it follows that the set
$\mathcal{E}$ of holomorphic functions of the form $f(z) =
\sum_{j=1}^m a_j e^{z_j \cdot \bar{w}_j}$, with $a_j \in \mathbb{C}$
and $w_j \in \mathbb{C}^n$, is dense in $L^p(\rho_a)$.  Since
$\mathcal{E} \subset L^1$, we have that $L^1$ is dense in $L^p$.  But
since $\mathcal{P}$ is dense in $L^1$ and the inclusion $L^1 \subset
L^p$ is continuous, we have $\mathcal{P}$ dense in $L^p$ as well.
Unfortunately for us, Wallst\'en's argument relies heavily on the
simple structure of the Gaussian, and it is not clear whether it can
adapted to a general complex Lie group with a H\"ormander metric $h$.

\begin{question}
  For general $(G,h)$, is $\mathcal{P}$ dense in $L^p(\rho_a)$ for $0
  < p < 1$?
\end{question}
  
In light of this issue, we adopt the following function spaces on
which to prove our main results.

\begin{notation}\label{HLp-def}
  For $1 \le p < \infty$, let $\mathcal{H} L^p(\rho_a) = \mathcal{H}
  \cap L^p(\rho_a)$.  For $0 < p < 1$, let $\mathcal{H} L^p(\rho_a)$
  be the $L^p$-closure of $\mathcal{H} \cap L^2(\rho_a)$, which may or
  may not equal $\mathcal{H} \cap L^p(\rho_a)$.
\end{notation}

In particular, by this definition, $\mathcal{P}$ is dense in
$\mathcal{H} L^p(\rho_a)$ for every $0 < p < \infty$.  Also, for $0 <
p < q < \infty$, $\mathcal{H} L^q$ is dense in $\mathcal{H} L^p$.

\begin{remark}\label{HLp-remark}
  Our spaces $\mathcal{H} L^p$ are defined differently from the spaces
  $\mathcal{H}^p$ used in \cite{gross-hypercontractivity-complex}, but
  in our current setting they are equal.  
  \begin{itemize}
  \item For $p=2$, 
    \cite{gross-hypercontractivity-complex} defines $\mathcal{H}^2$ as
    the $L^2$-closure of $\mathcal{H} \cap \mathcal{D}(Q)$; for us,
    Theorem \ref{poly-density}(\ref{P-dense},\ref{P-core-Q}) shows this
    equals $\mathcal{H} \cap L^2$.  
  \item For $p > 2$,
    \cite{gross-hypercontractivity-complex} defines $\mathcal{H}^p$ as
    $\mathcal{H}^2 \cap L^p$; for us this equals $\mathcal{H} \cap L^2
    \cap L^p = \mathcal{H} \cap L^p$.  
  \item For $0 < p < 2$, \cite{gross-hypercontractivity-complex}
    defines $\mathcal{H}^p$ as the $L^p$ closure of $\mathcal{H}^2$.
    For $0 < p < 1$ this is precisely our definition; for $1 \le p <
    2$, this equals $\mathcal{H} \cap L^p$ since $\mathcal{H} L^2$ is
    dense in $\mathcal{H} L^p$.
  \end{itemize}
  In the cases considered by \cite{gross-hypercontractivity-complex},
  it was possible that $\mathcal{H}^p$ was very different from
  $\mathcal{H} \cap L^p$; see the counterexamples in \cite[Section 5]{gross-hypercontractivity-complex}.
\end{remark}

We now return to the question of in what sense $B$ is a ``holomorphic
projection'' of $A$.  Let $P_{\mathcal{H}}$ be orthogonal projection
from $L^2$ onto the closed subspace $\mathcal{H} L^2$.

\begin{proposition}\label{B-holomorphic-projection}
  For $f \in \mathcal{D}(A) \cap \mathcal{D}(B)$, we have $Bf =
  P_{\mathcal{H}} Af$.
\end{proposition}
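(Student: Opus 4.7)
The plan is to read off the identity directly from the variational definitions of $A$ and $B$ and then use density of holomorphic polynomials (Theorem \ref{poly-density}).

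First I would fix $f \in \mathcal{D}(A) \cap \mathcal{D}(B)$ and let $g$ be an arbitrary element of $\mathcal{D}(Q) \cap \mathcal{H}$. Note $\mathcal{D}(A) \subset \mathcal{D}(Q)$ and $\mathcal{D}(B) \subset \mathcal{D}(Q) \cap \mathcal{H}$, so both bilinear forms below make sense. Applying the defining identities of the generators of $Q$ and of $Q|_{\mathcal{H}}$ yields
\begin{equation*}
  (Bf, g)_{L^2} = Q(f,g) = (Af, g)_{L^2}.
\end{equation*}

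Next, since $g \in \mathcal{H} L^2$, the orthogonal projection is idempotent and self-adjoint, so $(Af, g)_{L^2} = (Af, P_{\mathcal{H}} g)_{L^2} = (P_{\mathcal{H}} Af, g)_{L^2}$. Combining these gives
\begin{equation*}
  (Bf - P_{\mathcal{H}} Af, g)_{L^2} = 0 \qquad \text{for every } g \in \mathcal{D}(Q) \cap \mathcal{H}.
\end{equation*}
Both $Bf$ (by definition of $B$ as an operator on $\mathcal{H} L^2$) and $P_{\mathcal{H}} Af$ belong to the closed subspace $\mathcal{H} L^2$, so their difference does too.

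Finally, to conclude $Bf - P_{\mathcal{H}} Af = 0$, I need $\mathcal{D}(Q) \cap \mathcal{H}$ to be dense in $\mathcal{H} L^2$. This is immediate from Theorem \ref{poly-density}: part (\ref{P-core-Q}) gives $\mathcal{P} \subset \mathcal{D}(Q) \cap \mathcal{H}$, while part (\ref{P-dense}) (with $p = 2$) gives $\mathcal{P}$ dense in $\mathcal{H} L^2$. There is really no obstacle here: the argument is purely a soft manipulation of the form-domain definitions together with the density supplied by the previous section. The only point that has to be verified carefully is that $g$ ranges over a set dense in $\mathcal{H} L^2$, which is precisely what Theorem \ref{poly-density} provides.
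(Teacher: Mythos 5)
Your argument is correct and is essentially identical to the paper's own proof: both pass through $(Bf,g)_{L^2}=Q(f,g)=(Af,g)_{L^2}=(P_{\mathcal{H}}Af,g)_{L^2}$ for $g\in\mathcal{H}\cap\mathcal{D}(Q)$ and then invoke the density of $\mathcal{H}\cap\mathcal{D}(Q)$ in $\mathcal{H}\cap L^2$. You simply spell out the density step (via Theorem \ref{poly-density}) a bit more explicitly than the paper does.
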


\begin{proof}
  For any $g \in \mathcal{H} \cap \mathcal{D}(Q)$, we have
  \begin{align*}
    (Bf, g)_{L^2} = Q(f,g) = (Af, g)_{L^2} = (P_{\mathcal{H}} A f, g)_{L^2}.
  \end{align*}
  Since $\mathcal{H} \cap \mathcal{D}(Q)$ is dense in $\mathcal{H}
  \cap L^2$ we must have $Bf = P_{\mathcal{H}} Af$.
\end{proof}

To make the previous proposition more interesting, we should show that
$\mathcal{D}(A) \cap \mathcal{D}(B)$ is reasonably large.

\begin{proposition}\label{P-DA}
  $\mathcal{P} \subset \mathcal{D}(A)$.
\end{proposition}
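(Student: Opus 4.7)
The plan is to give an explicit formula for $Af$ when $f \in \mathcal{P}$ and verify that the formula lies in $L^2(\rho_a)$. The key observation is that holomorphic functions are harmonic with respect to the sub-Laplacian: since $\Delta = 4\sum_j Z_j \bar{Z}_j$ by (\ref{sub-Laplacian-def}), and $\bar{Z}_j f = 0$ for any holomorphic $f$ (Cauchy--Riemann), we have $\Delta f = 0$ on $\mathcal{H}$. Consequently, if the formula (\ref{A-smooth}) is extended to $\mathcal{P}$, we should find
\[
  Af = -h(df, d\log \rho_a),
\]
and the bulk of the proof is showing that this function lies in $L^2(\rho_a)$ and actually represents the action of the generator $A$.

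I would proceed in the following steps. First, by Theorem \ref{poly-density}(\ref{P-core-Q}) we already know $\mathcal{P} \subset \mathcal{D}(Q)$, so it suffices to locate $Af$ on $\mathcal{P}$. Second, fix $f \in \mathcal{P}_k$ and check $L^2$-integrability of $h(df, d\log \rho_a)$: expanding via (\ref{gXY}), each term has the form $(X_j f)(X_j \log \rho_a)$ or $(Y_j f)(Y_j \log \rho_a)$. By Lemma \ref{dilate-poly} applied to $X_j, Y_j \in V_1$, each of $X_j f$ and $Y_j f$ lies in $\mathcal{P}_{k-1}$ (or vanishes), and hence in $L^p(\rho_a)$ for every $1 \le p < \infty$ by the upper heat-kernel bound used at the start of the proof of Theorem \ref{poly-density}(a). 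By Lemma \ref{log-drho}, the factors $X_j \log \rho_a$, $Y_j \log \rho_a$ also lie in every $L^p(\rho_a)$, $p \ge 1$. Hölder's inequality then gives $h(df, d\log \rho_a) \in L^2(\rho_a)$.

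Third, I would identify $-h(df, d\log \rho_a)$ with $Af$ by testing against $g \in C_c^\infty(G)$, which is a core for $Q$. For such $g$, writing $f_n = f \varphi_n$ with cutoffs $\varphi_n$ as in Theorem \ref{poly-density}(\ref{P-core-Q}), one checks that for $n$ large enough that $\varphi_n = 1$ on a neighborhood of $\operatorname{supp} g$, the integrand $h(df_n, d\bar{g})\rho_a$ equals $h(df, d\bar{g})\rho_a$, so that
\[
  Q(f,g) = \int_G h(df, d\bar{g})\,\rho_a\,dm.
\]
Then, using (\ref{gXY}) and integrating by parts term by term, using the left-invariance of each $X_j, Y_j$ with respect to the Haar measure $m$ and the compact support of $g$, I obtain
\[
  Q(f,g) = -\int_G (\Delta f)\,\bar{g}\,\rho_a\,dm - \int_G h(df, d\log \rho_a)\,\bar{g}\,\rho_a\,dm.
\]
Since $\Delta f = 0$, the first term drops and one concludes $Q(f,g) = \bigl(-h(df, d\log \rho_a), g\bigr)_{L^2}$. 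By density of $C_c^\infty(G)$ in $\mathcal{D}(Q)$ and continuity of both sides in $g$, the identity extends to all $g \in \mathcal{D}(Q)$, which by Notation \ref{Q-A-notation} means $f \in \mathcal{D}(A)$ with $Af = -h(df, d\log \rho_a)$.

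The only mildly delicate point is the integration-by-parts step, since $f$ itself is not compactly supported; this is handled by the cutoff argument in the previous paragraph, which reduces the manipulation to an integral over $\operatorname{supp} g$ where everything is smooth and bounded. All other steps are routine given the tools already established earlier in the paper.
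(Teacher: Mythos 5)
Your proposal is correct and follows essentially the same route as the paper's proof: identify the candidate $Af = -h(df, d\log\rho_a)$ using $\Delta f = 0$ on holomorphic functions, show it lies in $L^2(\rho_a)$ by combining Lemma \ref{dilate-poly} and Lemma \ref{log-drho} with H\"older's inequality, and then verify $Q(f,\psi) = (Af,\psi)_{L^2}$ by integration by parts against $\psi \in C^\infty_c(G)$. The only cosmetic differences are that the paper expands $h(df,d\log\rho_a)$ in the complex vector fields $Z_j, \bar{Z}_j$ rather than $X_j, Y_j$, and concludes membership in $\mathcal{D}(A)$ directly from the bound $|Q(f,\psi)| \le \|\varphi\|_{L^2}\|\psi\|_{L^2}$ on the core rather than spelling out the density extension.
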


\begin{proof}
  Let $f \in \mathcal{P}$, and let $\varphi = - \Delta f - h(df,
  d\log\rho_a)$ be the function which, as in (\ref{A-smooth}), ought
  to equal $Af$.  Integration by parts shows that for any $\psi \in
  C^\infty_c(G)$ we have $Q(f, \psi) = \int_G \varphi \bar{\psi}
  \rho_a\,dm$, so if we can show $\varphi \in L^2(\rho_a)$, we will
  have $|Q(f,\psi)| \le \|\varphi\|_{L^2} \|\psi\|_{L^2}$, implying
  that $f \in \mathcal{D}(A)$ and moreover $Af = \varphi$.

  Since $f$ is holomorphic, $\Delta f =0$ so we have
  \begin{equation}
    \varphi = -h(d f, d \log \rho_a) = -  \sum_j Z_j f \bar{Z}_j \log
    \rho_a
  \end{equation}
  using (\ref{gZ}) and $\bar{Z}_j f = 0$.  By Lemma \ref{dilate-poly},
  $Z_j f \in \mathcal{P} \subset \bigcap_{q \ge 1} L^q(\rho_a)$, and
  by Lemma \ref{log-drho}, $\bar{Z}_j \log \rho_a \in \bigcap_{p \ge 1}
  L^p(\rho_a)$, so by H\"older's inequality, $\varphi \in
  L^2(\rho_a)$ as desired.
\end{proof}

(A similar argument would show that any $L^2$ holomorphic function with its
first derivatives in $L^{2+\epsilon}$ is also in $\mathcal{D}(A)$.)

In particular we have $\mathcal{P} \subset \mathcal{D}(A) \cap
\mathcal{D}(B)$, so $Bf = P_{\mathcal{H}} Af$ for all polynomials.  

In the case that $A$ is holomorphic, we actually have that $B$ is
simply the restriction of $A$ to $\mathcal{D}(A) \cap \mathcal{H}$.
We already showed in (\ref{DAcapH-subset-DB}) that $\mathcal{D}(A)
\cap \mathcal{H} \subset \mathcal{D}(B)$.  For the other direction,
let $f \in \mathcal{D}(B)$; by Theorem
\ref{poly-density}(\ref{P-core-B}) we can find a sequence $p_n \in
\mathcal{P}$ with $p_n \to f$ and $Bp_n \to Bf$ in $L^2$.  But $B p_n
= P_{\mathcal{H}} A p_n = A p_n$ if $A$ is holomorphic, so $A p_n$
converges, and since $A$ is closed we have $f \in \mathcal{D}(A)$ and
$Af = Bf$.

It is conceivable that even when $A$ is not holomorphic, we might
get $\mathcal{D}(B) = \mathcal{D}(A) \cap \mathcal{H}$, in which case
$B$ is simply the restriction of $P_{\mathcal{H}} A$ to
$\mathcal{D}(A) \cap {H}$, i.e. the literal holomorphic projection of
$A$.  However, we do not have a proof of this.

\begin{question}
  Under what conditions does $\mathcal{D}(B) = \mathcal{D}(A) \cap \mathcal{H}$?
\end{question}

\section{Dilations and the operator $B$}\label{sec:B}

In this subsection, we show that in fact the operator $B$ is just a
constant multiple of the vector field $Z$ introduced in
(\ref{e.d2.8}): $B = \frac{2}{a} Z$.  Along the way, we establish some
lemmas that will also be useful in future computations.

\begin{remark}
  To see that $B = \frac{2}{a} Z$ is a plausible statement, consider
  the Euclidean case $G = \mathbb{C}^n$ as in Remark
  \ref{euclidean-remark}.  Here $A$ is the Ornstein--Uhlenbeck
  operator  $Af = -\Delta f + \frac{1}{a} z\cdot \nabla f$;  
   since this
  is a holomorphic operator, $B$ is simply the restriction of $A$ to
  holomorphic functions.  For holomorphic $f$ we have $\Delta f = 0$
  and  $z\cdot \nabla f 
   = 2 \sum_{j=1}^n z_j \frac{\partial f}{\partial z_j}$.  On the
  other hand, as in (\ref{e.d2.9}), in this case we have $Zf =
  \sum_{j=1}^n z_j \frac{\partial f}{\partial z_j}$ (note that all the
  $c_j$ are 1).
\end{remark}

\begin{notation}\label{poly-growth}
  Let us introduce a class of convenient functions with which to work.
  We will say a function $f : G \to \mathbb{C}$ has
  \defword{polynomial growth} if there are constants $C,N$ such that
  $|f(z)| \le C (1 + d(e,z))^N$ for all $z$.  Then we let $C^2_p(G)$
  denote the class of all $f \in C^2(G)$ such that $f, \xi_j f, \xi_j
  \xi_k f, Xf, Yf$ all have polynomial growth.
\end{notation}

It is immediate that $\mathcal{P} \subset C^2_p(G)$, and if $f,g$ are
in $C^2_p(g)$ then so are $f \circ \delta_\lambda$, $\bar{f}$, $f+g$,
and $fg$.  Moreover, if $u : \mathbb{C} \to \mathbb{C}$ is a $C^2$
function with bounded first and second derivatives, then $u(f)$ is
also in $C^2_p$.  This is certainly not the broadest class of
functions for which the results below will hold, but it is sufficient
for our purposes and simplifies several of the arguments.

\begin{lemma}\label{ds-Delta-X}
  If $f \in C^2_p(G)$, then $s \mapsto \int_G f \,\rho_s\,dm$ is
  differentiable and
  \begin{equation}\label{ds-Delta-X-eq}
    \frac{d}{ds} \int_G f \,\rho_s\,dm = \frac{1}{4} \int_G \Delta f\, \rho_s\, dm
    = \frac{1}{2s} \int_G Xf\,\rho_s\,dm.
  \end{equation}
\end{lemma}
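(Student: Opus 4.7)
The plan is to establish the two equalities in turn, using the heat equation $\partial_s\rho_s = \tfrac14\Delta\rho_s$ for the first and the dilation scaling of $\rho_s$ (together with the first equality) for the second.

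For the first equality, I would first justify bringing the derivative inside the integral by dominated convergence. For $f \in C^2_p(G)$ the polynomial-growth hypothesis gives $|f| \le C(1+d(e,\cdot))^N$, while Theorem \ref{heat-kernel-deriv} (applied with $m=1$, $k=0$, $x=e$) gives $|\partial_s \rho_s| \le C\rho_t$ uniformly for $s$ in a compact subinterval of $(0,\infty)$, where $t$ is chosen slightly larger. The Gaussian upper bound of Theorem \ref{heat-kernel-upper-lower} together with (\ref{d-scaling}) shows that $(1+d(e,\cdot))^N\rho_t \in L^1(m)$, supplying the dominating function. The heat equation then gives $\frac{d}{ds}\int f\rho_s\,dm = \tfrac14\int f\Delta\rho_s\,dm$, and it remains to transfer the Laplacian onto $f$ by integration by parts. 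For this I would pick smooth cutoffs $\varphi_n \in C^\infty_c(G)$ converging to $1$ with $\widetilde{\xi}\varphi_n, \widetilde{\xi}\widetilde{\eta}\varphi_n$ uniformly bounded and tending to $0$; the identity $\int (\widetilde{\xi}\widetilde{\eta} g)h\,dm = \int g(\widetilde{\eta}\widetilde{\xi} h)\,dm$ for compactly supported $g$ (valid by bi-invariance of $m$) combined with dominated convergence, using polynomial growth of $f,\widetilde{\xi}f,\widetilde{\xi}\widetilde{\eta}f$ and the bounds $|\widetilde{\xi}\rho_s|, |\widetilde{\xi}\widetilde{\eta}\rho_s| \le C\rho_t$ from Theorem \ref{heat-kernel-deriv}, yields $\int f\Delta\rho_s\,dm = \int \Delta f\cdot\rho_s\,dm$.

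For the second equality, the key ingredient is a scaling identity for $\rho_s$. Differentiating $\rho_s(\delta_{e^r}y) = e^{-2Dr}\rho_{se^{-2r}}(y)$ from (\ref{rho-dilate}) with respect to $r$ at $r=0$ yields the pointwise identity
\begin{equation*}
  X\rho_s = -2D\rho_s - \tfrac{s}{2}\Delta\rho_s.
\end{equation*}
The real vector field $X$ has divergence $2D$ with respect to $m$: by (\ref{m-dilate-integrate}), $\int Xf\,dm = \tfrac{d}{dr}\big|_{r=0}\int f\circ\delta_{e^r}\,dm = -2D\int f\,dm$ for compactly supported $f$. Integrating $Xf\cdot\rho_s$ by parts against $X$ (again via cutoffs) gives
\begin{equation*}
  \int Xf\cdot\rho_s\,dm = -\int f\bigl(X\rho_s + 2D\rho_s\bigr)\,dm = \tfrac{s}{2}\int f\Delta\rho_s\,dm = \tfrac{s}{2}\int \Delta f\cdot\rho_s\,dm,
\end{equation*}
where the final equality reuses the integration by parts from the first half. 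Multiplying by $\tfrac{1}{2s}$ recovers $\tfrac14\int\Delta f\cdot\rho_s\,dm$ and so proves the second equality.

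The main obstacle is purely the careful management of the non-compactness of $G$ in the differentiation-under-the-integral and integration-by-parts steps, since $f$ has only polynomial growth. Theorem \ref{heat-kernel-deriv}, which dominates every derivative of $\rho_s$ by a multiple of $\rho_t$ for any $t > s$, is precisely the tool that turns every polynomial-times-derivative expression into an absolutely integrable bound, so that all the routine manipulations go through inside the class $C^2_p(G)$.
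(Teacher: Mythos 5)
Your proposal is correct, and for the second equality it takes a genuinely different route from the paper. On the first equality both arguments reduce to the heat equation plus integration by parts against $\Delta$; the paper, however, proves the whole of (\ref{ds-Delta-X-eq}) first for $f\in C^\infty_c(G)$ and then extends to $C^2_p(G)$ by integrating the identity in time against cutoff approximations $f_n=\psi_n f$ and invoking the fundamental theorem of calculus. That extension only requires dominating $\sup_{\sigma\in[s,t]}\rho_\sigma$ itself, whereas your direct differentiation under the integral for general $f$ needs the bound $|\partial_s\rho_s|\le C\rho_t$ from Theorem \ref{heat-kernel-deriv} to hold \emph{uniformly} for $s$ in a compact interval --- true, but a slight strengthening of the theorem as literally stated, so worth flagging. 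For the second equality the paper never touches $X\rho_s$ at all: writing $a(s)=\int_G f\,\rho_s\,dm$, it uses the semigroup scaling (\ref{semigroup-dilate}) to get $\int_G (f\circ\delta_{e^r})\,\rho_s\,dm=a(se^{2r})$ and differentiates in $r$ at $r=0$, obtaining $\int_G Xf\,\rho_s\,dm=2s\,a'(s)$ with no integration by parts against $X$ and no divergence computation. Your alternative --- the pointwise identity $X\rho_s=-2D\rho_s-\tfrac{s}{2}\Delta\rho_s$ from (\ref{rho-dilate}) combined with the adjoint relation $X^*=-X-2D$ --- is a correct infinitesimal version of the same scaling; it costs one extra cutoff/domination argument, for which you do have the needed bound $|X\rho_s|\le C\rho_t$ via the heat equation. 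Both yield the lemma; the paper's version is somewhat lighter on analysis.
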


\begin{proof}
  Suppose first that $f \in C^\infty_c(G)$.  Let $a(s) = \int_G
  f\,\rho_s\,dm$.  For the first equality, differentiating under the
  integral sign and then integrating by parts gives
  \begin{equation*}
    a'(s) = \int_G f\,\frac{d}{ds}\rho_s\,dm = \frac{1}{4} \int_G
    f\,\Delta \rho_s\,dm = \frac{1}{4} \int_G \Delta f \,\rho_s\,dm.
  \end{equation*}
  For the second equality, we use (\ref{semigroup-dilate}) to
  observe
  \begin{align*}
    \int_G (f \circ \delta_{e^r})\,\rho_s\,dm &= e^{s \Delta/4}(f
    \circ \delta_{e^r})(e) \\&
    = (e^{s e^{2r} \Delta/4}
    f)(\delta_{e^r}(e)) \\ 
    &=  (e^{s e^{2r} \Delta/4}
    f)(e) \\
    &= \int_G f \,\rho_{s e^{2r}}\,dm \\
    &= a(s e^{2r}).
  \end{align*}
  Now differentiating under
  the integral sign with respect to $r$ and then setting $r=0$, we get
  \begin{equation*}
    \int_G Xf\,\rho_s\,dm = \frac{d}{dr}\Big|_{r=0} a(s e^{2r}) = 2s\, a'(s)
  \end{equation*}
  which establishes the second equality of (\ref{ds-Delta-X-eq}).

  For the case of general $f \in C^2_p(G)$, let $\psi \in
  C^\infty_c(G)$ be a cutoff function which equals 1 on a neighborhood
  of $e \in G$, and set $\psi_n(x) = \psi(\delta_{1/n}(x))$.  Then
  $\psi_n \to 1$ boundedly.  It follows from (\ref{X-dilation})
  that $\widetilde{\xi}_j \psi_n \to 0$ and $\widetilde{\xi}_j
  \widetilde{\xi}_k \psi_n \to 0$ boundedly, at least for $\xi \in V_1$, and
  the same for general $\xi \in \mathfrak{g}$ by taking commutators.
  Then since $X,Y$ commute with $\delta_{1/n}$, we also have $X \psi_n
  \to 0$, $Y\psi_n \to 0$ boundedly.  Hence setting $f_n = \psi_n f$,
  we have constructed $f_n \in C^2_c(G)$ such that, pointwise,
  \begin{equation*}
    f_n \to f, \qquad \Delta f_n \to \Delta f, \qquad X f_n \to Xf,
  \end{equation*}
  and moreover, such that $f_n$ and its derivatives
  are controlled by $f$ and its derivatives.  In particular, there
  exist $C, N$ such that for all $n,x$ we have 
  \begin{equation*}
    |f_n(x)| + |\Delta f_n(x)| + |X f_n(x)| \le C(1+d(e,x))^N.
  \end{equation*}
  Now by integrating (\ref{ds-Delta-X-eq}), we have
  \begin{equation}\label{integrated}
    \int_G f_n\, (\rho_t - \rho_s)\,dm = \frac{1}{4}\int_s^t \int_G \Delta
    f_n\,\rho_{\sigma}\,dm\,d\sigma = \int_s^t \frac{1}{2\sigma} \int_G Xf_n\,\rho_\sigma\,dm\,d\sigma.
  \end{equation}
  By the Gaussian heat kernel upper bounds of Theorem
  \ref{heat-kernel-upper-lower}, we have
  \begin{equation*}
    \int_G C (1+d(e,x))^N \sup_{\sigma \in [s,t]} \rho_\sigma(x)
    \,m(dx) < \infty
  \end{equation*}
  and so by Fubini's theorem and dominated convergence, we can pass to
  the limit in (\ref{integrated}) as $n \to \infty$ to get 
  
  \begin{equation}
    \int_G f\, (\rho_t - \rho_s)\,dm =\frac{1}{4} \int_s^t \int_G \Delta
    f\,\rho_{\sigma}\,dm\,d\sigma = \int_s^t \frac{1}{2\sigma} \int_G Xf\,\rho_\sigma\,dm\,d\sigma.
  \end{equation}
  { Since the two integrals over $G$ are each continuous functions of
  $\sigma$,} then by the fundamental theorem of calculus, this is
  equivalent to the desired result.
\end{proof}

\begin{lemma}\label{int-Y-0}
  For $f \in C^2_p(G)$, we have $\int_G Yf\,\rho_s\,dm = 0$.
\end{lemma}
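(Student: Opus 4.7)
The plan is to exploit the fact that the dilation $\delta_{e^{i\theta}}$ preserves the heat kernel measure, since by the scaling relation (\ref{rho-dilate}) and $|e^{i\theta}| = 1$ we have
\begin{equation*}
\rho_s(\delta_{e^{i\theta}}(y)) = \rho_s(y),
\end{equation*}
and the Lebesgue measure $m$ is preserved as well by (\ref{m-dilate}). Consequently, for any $\theta \in \mathbb{R}$, a change of variables yields
\begin{equation*}
\int_G (f \circ \delta_{e^{i\theta}})\,\rho_s\,dm = \int_G f \,\rho_s\,dm,
\end{equation*}
so the left-hand side is constant in $\theta$. The idea is then to differentiate at $\theta=0$ and use the definition (\ref{e.d2.7}) of $Y$ to obtain $\int_G Yf\,\rho_s\,dm = 0$.

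The only real work is to justify differentiation under the integral sign. First, I would check that $\int_G Yf \,\rho_s\,dm$ makes sense: since $f \in C^2_p(G)$, the function $Yf$ has polynomial growth in $d(e,\cdot)$ (via the equivalence with $|\cdot|_1$ from (\ref{d-scaling})), and the Gaussian upper bound of Theorem \ref{heat-kernel-upper-lower} ensures integrability against $\rho_s$. For the differentiation itself, since $\{\delta_{e^{i\theta}}\}$ is a one-parameter group of diffeomorphisms generated by $Y$, we have
\begin{equation*}
\frac{d}{d\theta} f(\delta_{e^{i\theta}} x) = (Yf)(\delta_{e^{i\theta}} x).
\end{equation*}
The crucial observation is that $d(e, \delta_{e^{i\theta}} x) = d(e,x)$, again because $|e^{i\theta}|=1$ and distance scales by $|\lambda|$ under $\delta_\lambda$. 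Hence the polynomial growth bound on $Yf$ yields
\begin{equation*}
\left|\frac{d}{d\theta} f(\delta_{e^{i\theta}} x)\right| \le C(1 + d(e,x))^N
\end{equation*}
uniformly in $\theta$, which is an integrable dominator against $\rho_s$. A standard dominated convergence argument then permits differentiation under the integral.

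Combining these pieces, differentiating the identity $\int_G (f \circ \delta_{e^{i\theta}})\,\rho_s\,dm = \int_G f\,\rho_s\,dm$ at $\theta = 0$ gives $\int_G Yf\,\rho_s\,dm = 0$. The main (and essentially only) obstacle is the uniform domination needed for differentiation under the integral, which is handled neatly by the observation that $d(e, \cdot)$ is invariant under $\delta_{e^{i\theta}}$.
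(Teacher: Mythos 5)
Your proof is correct and follows essentially the same route as the paper: both establish that $\int_G (f\circ\delta_{e^{i\theta}})\,\rho_s\,dm$ is independent of $\theta$ (the paper via the semigroup scaling identity, you via the equivalent heat-kernel scaling relation and invariance of $m$ under unit-modulus dilations) and then differentiate at $\theta=0$. The only difference is in justifying the interchange of derivative and integral: the paper first treats $C^2_c(G)$ and then passes to $C^2_p(G)$ by cutoff functions, whereas you dominate directly using the polynomial growth of $Yf$ together with $d(e,\delta_{e^{i\theta}}x)=d(e,x)$, which is a perfectly valid (and arguably more self-contained) alternative.
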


\begin{proof}
  This is similar to the previous proof.  By (\ref{semigroup-dilate})
  we have
  \begin{align*}
    \int_G (f \circ \delta_{e^{i\theta}})\,\rho_s\,dm &= e^{s \Delta/4}(f
    \circ \delta_{e^{i\theta}})(e) \\&
    = (e^{s |e^{i\theta}|^2 \Delta/4}
    f)(\delta_{e^{i \theta}}(e)) \\ 
    &=  (e^{s \Delta/4}
    f)(e) \\
    &= \int_G f \,\rho_{s}\,dm. 
  \end{align*}
  If $f \in C^2_c(G)$ we can differentiate under the integral sign
  with respect to $\theta$ and set $\theta = 0$ to get $\int_G
  Yf\,\rho_s\,dm = 0$.  For $f \in C^2_p(G)$, use cutoff functions.
\end{proof}

\begin{corollary}
\label{Z-symmetric} Suppose that $f, g \in \mathcal{P}$. Then
\begin{equation}
(Zf,g)_{L^{2}(\rho_{a})}=(f,Zg)_{L^{2}(\rho_{a})}.
\end{equation}
\end{corollary}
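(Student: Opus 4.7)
The plan is to reduce the symmetry statement to Lemma \ref{int-Y-0} by an elementary Leibniz-rule manipulation. The key observation is that although $Z$ is only $\mathbb{C}$-linear (not symmetric on all functions), for holomorphic $f$ and antiholomorphic $\bar g$ the operator $Z$ (resp.\ $\bar Z$) behaves as a first-order derivation with very few surviving terms, and the difference $Z - \bar Z = -iY$ integrates to zero against $\rho_a$.

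More concretely, first I would record that for any $g \in \mathcal{P}$ we have $\bar Z g = 0$ (since $g$ is holomorphic and $\bar Z$ is of type $(0,1)$) and $Z \bar g = 0$ (since $\bar g$ is antiholomorphic and $Z$ is of type $(1,0)$). Consequently, by the Leibniz rule applied to $f \bar g$,
\begin{equation*}
Z(f \bar g) = (Zf)\bar g + f (Z \bar g) = (Zf)\bar g,
\qquad
\bar Z(f \bar g) = (\bar Z f)\bar g + f(\bar Z \bar g) = f\, \overline{Zg},
\end{equation*}
where in the last equality I used that $\bar Z \bar g = \overline{Zg}$ because $Z = \tfrac12(X-iY)$ and $X,Y$ are real vector fields.

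Subtracting the two identities and integrating against $\rho_a\,dm$ gives
\begin{equation*}
(Zf, g)_{L^2(\rho_a)} - (f, Zg)_{L^2(\rho_a)} = \int_G \bigl[Z(f\bar g) - \bar Z(f\bar g)\bigr]\,\rho_a\,dm = -i \int_G Y(f\bar g)\,\rho_a\,dm,
\end{equation*}
using $Z - \bar Z = -iY$ (see (\ref{e.d2.8})). Since $f, \bar g \in \mathcal{P}$ are polynomials, the product $f\bar g$ lies in $C^2_p(G)$ (the class is closed under products and complex conjugation), so Lemma \ref{int-Y-0} applies and the right-hand side vanishes. That yields the desired symmetry.

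The only subtlety worth verifying carefully is that $f \bar g \in C^2_p(G)$, which is immediate from the closure properties of $C^2_p$ noted right after Notation \ref{poly-growth}. No obstacle is anticipated; everything is a short consequence of the Leibniz rule together with Lemma \ref{int-Y-0}.
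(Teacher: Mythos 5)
Your proof is correct and is essentially the paper's own argument: the paper likewise writes $-iY(f\bar g)=(Z-\bar Z)(f\bar g)=(Zf)\bar g-f\overline{Zg}$ and then invokes Lemma \ref{int-Y-0} together with $f\bar g\in C^2_p(G)$. You have merely spelled out the Leibniz-rule and type-$(1,0)$/$(0,1)$ cancellations that the paper leaves implicit.
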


\begin{proof}
$-iY(f\bar{g})=(Z-\bar{Z})(f\bar{g})=(Zf)\bar{g}-f\overline{Zg}$. Since $f \bar{g}
\in C^2_p(G)$, by Lemma \ref{int-Y-0} the
integral with respect to $\rho_{a}\,dm$ is zero.
\end{proof}

\begin{theorem}
\label{ZB} Let $a>0$.  We have
\begin{equation}\label{ZB-domain}
  \mathcal{D}(B) = \{f \in \mathcal{H} L^2(\rho_a) : Zf \in
  L^2(\rho_a)\}
\end{equation}
and
\begin{equation}\label{ZB-formula}
Bf=\frac{2}{a}Zf\quad\text{for all}\quad f\in\mathcal{D}(B).
\end{equation}
\end{theorem}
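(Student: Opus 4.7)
The plan is to exploit the orthogonal decomposition $\mathcal{H} L^2(\rho_a) = \bigoplus_{k \ge 0} \mathcal{P}_k$ (Theorem \ref{poly-density}(\ref{L2-P-decomp})) together with the fact that $\mathcal{P}$ is a core for $B$ (Theorem \ref{poly-density}(\ref{P-core-B})), reducing the identification of $B$ to its action on each $\mathcal{P}_k$. Since $Zf = kf$ on $\mathcal{P}_k$ by \eqref{e.d2.13}, it suffices to show that $Bf = \tfrac{2k}{a} f$ there. For $f \in \mathcal{P}_k$ and $g \in \mathcal{P}_j$ with $j \ne k$, Theorem \ref{poly-density}(\ref{P-perp}) gives $(Bf,g)_{L^2} = Q(f,g) = 0$, so the orthogonal decomposition of $\mathcal{H} L^2(\rho_a)$ forces $Bf \in \mathcal{P}_k$, and the task reduces to establishing
\begin{equation*}
  Q_a(f,g) = \tfrac{2k}{a}(f,g)_{L^2(\rho_a)}, \qquad f, g \in \mathcal{P}_k.
\end{equation*}

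The central computation is to apply Lemma \ref{ds-Delta-X} to the function $f\bar g$ (which lies in $C^2_p(G)$, since $\mathcal{P} \subset C^2_p(G)$ and $C^2_p(G)$ is closed under products and conjugation), obtaining two expressions for $\tfrac{d}{ds}(f,g)_{L^2(\rho_s)}$. On the dilation side, the identities $Xf = Zf = kf$ and $X\bar g = \overline{Xg} = k\bar g$ give $X(f\bar g) = 2k\, f\bar g$, whence $\tfrac{d}{ds}(f,g)_{L^2(\rho_s)} = \tfrac{k}{s}(f,g)_{L^2(\rho_s)}$. On the Laplacian side, holomorphicity of $f$ and $g$ makes $\bar Z_j f = 0 = Z_j \bar g$, so $Z_j\bar Z_j(f\bar g) = (Z_j f)\overline{(Z_j g)}$ (using that $Z_j$ annihilates the antiholomorphic factor $\overline{Z_j g}$), hence $\Delta(f\bar g) = 4\sum_j (Z_j f)\overline{(Z_j g)}$; combined with \eqref{gZ}, which under the same vanishing conditions gives $Q_s(f,g) = 2\sum_j (Z_j f, Z_j g)_{L^2(\rho_s)}$, this yields $\tfrac{d}{ds}(f,g)_{L^2(\rho_s)} = \tfrac{1}{2} Q_s(f,g)$. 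Equating the two expressions at $s=a$ produces the required identity, and thus $Bf = \tfrac{2k}{a} f = \tfrac{2}{a} Zf$ on each $\mathcal{P}_k$.

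For the extension to $\mathcal{D}(B)$, since $\mathcal{P}$ is a core for the self-adjoint operator $B$ and $B$ acts on $\mathcal{P}_k$ as $\tfrac{2k}{a}$ times the identity, standard spectral theory realizes $B$ as the diagonal operator associated to this orthogonal decomposition: writing $f = \sum_k f_k \in \mathcal{H} L^2(\rho_a)$ with $f_k \in \mathcal{P}_k$, one has $f \in \mathcal{D}(B)$ if and only if $\sum_k k^2\|f_k\|_{L^2}^2 < \infty$, and then $Bf = \tfrac{2}{a}\sum_k k f_k$. On the other hand, Lemma \ref{l.d2.11} gives $Zf = \sum_k k f_k$ pointwise, and the uniqueness portion of Lemma \ref{l.d2.9} identifies this as the $\mathcal{P}$-expansion of $Zf$ whenever $Zf$ is holomorphic and in $L^2$; by orthogonality $\|Zf\|_{L^2}^2 = \sum_k k^2\|f_k\|_{L^2}^2$. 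Therefore $Zf \in L^2(\rho_a)$ iff $f \in \mathcal{D}(B)$, and on this common domain $Bf = \tfrac{2}{a} Zf$. I expect the main technical care to go into the scaling/differentiation in the central computation and the verification that the pointwise series $\sum k f_k$ is genuinely the $L^2$-homogeneous expansion of $Zf$, which ultimately rests on Lemma \ref{l.d2.9}.
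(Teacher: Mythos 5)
Your proposal is correct, and while its computational engine is the same as the paper's, it is organized around a genuinely different structural idea. Both proofs rest on applying Lemma \ref{ds-Delta-X} to $f\bar{g}$ and on the identity $\Delta(f\bar g)=2h(df,d\bar g)$ for holomorphic $f,g$, which converts $Q(f,g)$ into a dilation derivative. The paper carries this out for arbitrary $f,g\in\mathcal{P}$, which forces it to invoke the symmetry of $Z$ (Corollary \ref{Z-symmetric}) to turn $(Zf,g)+(f,Zg)$ into $2(Zf,g)$, and then handles the two domain inclusions by separate limiting arguments: local uniform convergence along a core sequence for $\mathcal{D}(B)\subset\{Zf\in L^2\}$, and Fej\'er means together with closedness of $B$ for the reverse inclusion. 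You instead restrict to a single homogeneous component $\mathcal{P}_k$, where $X(f\bar g)=2k\,f\bar g$ makes the identity an eigenvalue equation $Q(f,g)=\tfrac{2k}{a}(f,g)$ with no need for Corollary \ref{Z-symmetric}, and then you read off both \eqref{ZB-domain} and \eqref{ZB-formula} at once from the spectral diagonalization of $B$ over $\bigoplus_k\mathcal{P}_k$. Your route buys a complete and explicit description of $B$ (spectrum $\{2k/a\}$, eigenspaces $\mathcal{P}_k$), at the cost of two points that you correctly flag and that do need to be written out: that the closure of the restriction of $B$ to finite sums of homogeneous polynomials is the full diagonal operator, and that the $L^2$-orthogonal decomposition of a function in $\mathcal{H}L^2$ coincides with its pointwise homogeneous decomposition from Lemma \ref{l.d2.9} (so that $\lVert Zf\rVert_{L^2}^2=\sum_k k^2\lVert f_k\rVert_{L^2}^2$ via Lemma \ref{l.d2.11}); both follow from uniform-on-compacts convergence of $L^2$-convergent sequences of holomorphic functions, as used elsewhere in the paper. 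Neither point is a gap, only a place where the details must be supplied.
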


\begin{proof}
  We begin by showing that (\ref{ZB-formula}) holds for $f \in
  \mathcal{P}$.  Suppose that $f$ and $g$ are in $\mathcal{P}$, and let $Z_j$ be as
defined in (\ref{Zj-def}).  First observe that
\begin{equation*}
  Z_j \bar{Z_j}(f \bar{g}) = Z_j \bar{Z}_j f \cdot \bar{g} + \bar{Z}_j
  f \cdot Z_j \bar{g} + Z_j f \cdot \bar{Z}_j \bar{g} + f \cdot Z_j
  \bar{Z}_j \bar{g} = Z_j f \cdot \overline{Z_j g}.
\end{equation*}
The first, second and fourth terms of the middle expression vanish
because $\bar{Z}_j f = 0$ and $Z_j \bar{Z}_j \bar{g} = \bar{Z}_j Z_j
\bar{g} = 0$ (since $Z_j$ is of type (1,0) and commutes with
$\bar{Z}_j$).  So by (\ref{gZ}) and (\ref{sub-Laplacian-def}) we have
\begin{equation*}
  h(df, d\bar{g}) = \frac{1}{2} \Delta(f \bar{g}).
\end{equation*}
Note that $f \bar{g} \in C^2_p(G)$.  Thus multiplying by $\rho_a$ and integrating, we have
\begin{align*}
(Bf,g)_{L^2(\rho_a)}  &  =Q(f,g)\\
&  =\frac{1}{2} \int_{G}\Delta(f\overline{g})\rho_{a}\,dm\\
&  =\frac{1}{a}\int_{G}X(f\overline{g})\rho_{a}\,dm && \text{by Lemma \ref{ds-Delta-X}}\\
&  =\frac{1}{a} \int_{g}\{(Xf)\overline{g}+f\overline{Xg}\}\rho_{a}\,dx\\
&  =\frac{1}{a}\int_{G}\{(Zf)\overline{g}+f\overline{Zg}\}\rho_{a}\,dx
  && \text{see (\ref{Z-X-poly})}\\
  &  =\frac{1}{a} (Zf, g)_{L^2} + (f,Zg)_{L^2} \\
  &= \frac{2}{a} (Zf, g)_{L^2} && \text{by Corollary \ref{Z-symmetric}.}
\end{align*}
Since $Bf, Zf$ are both holomorphic and $\mathcal{P}$ is dense in
$\mathcal{H} L^2(\rho_a)$, we conclude $Bf = \frac{2}{a} Zf$.  

Now let $f \in \mathcal{D}(B)$ be arbitrary.  Since $\mathcal{P}$ is a
core for $B$, we may find $f_n \in \mathcal{P}$ with $f_n \to f$ and
$B f_n \to Bf$ in $L^2$, and also uniformly on compact sets.  In
particular, $Z f_n$ converges uniformly on compact sets so its limit
must be $Zf$.  We conclude that $Bf = \frac{2}{a} Zf$, and have also
shown the $\subset$ inclusion of (\ref{ZB-domain})

For the other inclusion, suppose $f, Zf \in \mathcal{H} L^2$, and as
in (\ref{e.d2.17}), set
\begin{equation*}
  g_n(z) = \int_{-\pi}^{\pi} F_n(\theta) f(\delta_{e^{i \theta}}(z))\,d\theta.
\end{equation*}
We showed in Theorem \ref{poly-density}(\ref{P-dense}) that $g_n \in
\mathcal{P}$ and $g_n \to
f$ in $L^2$.  
Since the integral is over a compact set and $f$ is smooth, we can
differentiate under the integral sign to obtain
\begin{equation*}
  Z g_n(z) = \int_{-\pi}^{\pi} F_n(\theta) (Zf)(\delta_{e^{i\theta}}(z))\,d\theta.
\end{equation*}
Then as before, we
have $Z g_n \to Zf$ in $L^2$.  Hence $B g_n \to \frac{2}{a} Zf$ in
$L^2$.  Since $B$ is a closed operator, we have $f \in \mathcal{D}(B)$.
\end{proof}

\begin{corollary}
\label{e-tB-dilate}
\begin{equation}
e^{-tB}f = f \circ \delta_{e^{-2t/a}} \label{e-tB-eqn}%
\end{equation}
for $f\in\mathcal{H}\cap L^{2}(\rho_{a})$ and $t \ge 0$.
\end{corollary}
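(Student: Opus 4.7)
The plan is to reduce the corollary to a direct diagonalization of $B$ on the orthogonal decomposition $\mathcal{H}\cap L^2(\rho_a) = \bigoplus_{k\ge 0}\mathcal{P}_k$ provided by Theorem \ref{poly-density}(\ref{L2-P-decomp}), together with the identification $B = \tfrac{2}{a}Z$ from Theorem \ref{ZB}.

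The first step is to observe that each $\mathcal{P}_k$ is an eigenspace of $B$ with eigenvalue $2k/a$. By Proposition \ref{P-DA} and Theorem \ref{ZB}, every $f\in\mathcal{P}_k\subset\mathcal{P}$ lies in $\mathcal{D}(B)$, and equation (\ref{e.d2.13}) gives $Zf=kf$, so $Bf=(2k/a)f$. Since the subspaces $\{\mathcal{P}_k\}$ are mutually $L^2$-orthogonal by Theorem \ref{poly-density}(\ref{P-perp}) and together span a dense subspace of $\mathcal{H}\cap L^2(\rho_a)$, they provide a complete orthogonal diagonalization of the self-adjoint operator $B$. Functional calculus then yields, for any $f=\sum_{k\ge 0}f_k\in\mathcal{H}\cap L^2(\rho_a)$ (with $f_k\in\mathcal{P}_k$) and any $t\ge 0$,
\begin{equation*}
 e^{-tB}f \;=\; \sum_{k=0}^\infty e^{-2kt/a}\,f_k,
\end{equation*}
the series converging in $L^2(\rho_a)$.

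It remains to identify this $L^2$-limit with $f\circ\delta_{e^{-2t/a}}$. By Lemma \ref{l.d2.9}, for each fixed $z\in G$ the pointwise homogeneous expansion gives
\begin{equation*}
 f(\delta_{e^{-2t/a}}z) \;=\; \sum_{k=0}^\infty (e^{-2t/a})^k f_k(z) \;=\; \sum_{k=0}^\infty e^{-2kt/a}\,f_k(z).
\end{equation*}
Thus the partial sums $\sum_{k=0}^N e^{-2kt/a}f_k$ converge both in $L^2(\rho_a)$ (to $e^{-tB}f$) and pointwise (to $f\circ\delta_{e^{-2t/a}}$); extracting an a.e.\ convergent subsequence shows the two limits coincide almost everywhere, and since $e^{-tB}f$ and $f\circ\delta_{e^{-2t/a}}$ are both holomorphic, hence continuous, they are equal on all of $G$. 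There is no real obstacle here once Theorems \ref{ZB} and \ref{poly-density} are in hand; the only care needed is to align the $L^2$ convergence of the spectral expansion with the pointwise convergence of the homogeneous decomposition.
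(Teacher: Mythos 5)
Your argument is correct and is essentially the paper's own proof: both establish $e^{-tB}f = f\circ\delta_{e^{-2t/a}}$ on each $\mathcal{P}_k$ via Theorem \ref{ZB} and \eqref{e.d2.13}, and then extend to all of $\mathcal{H}\cap L^{2}(\rho_{a})$ by matching an $L^2$ limit with a pointwise one --- the paper via an arbitrary approximating sequence from $\mathcal{P}$ together with the $L^2$-contractivity of $e^{-tB}$, you via the spectral decomposition over $\bigoplus_k\mathcal{P}_k$. The only step you leave implicit is that the $L^2$-orthogonal components $f_k$ from Theorem \ref{poly-density} coincide with the pointwise homogeneous components of Lemma \ref{l.d2.9}; this is true (the $L^2$ partial sums of holomorphic functions converge locally uniformly, hence pointwise, so the uniqueness assertion of Lemma \ref{l.d2.9} applies), but it deserves a word.
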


\begin{proof}
For $f \in \mathcal{P}_k \subset \mathcal{D}(B)$, by Theorem
\ref{ZB} and (\ref{e.d2.13}), both sides of (\ref{e-tB-eqn}) are
equal to $e^{-2tk/a} f$.  Hence (\ref{e-tB-eqn}) holds for all $f \in
\mathcal{P}$.  Now if $f \in \mathcal{H} \cap L^2(\rho_a)$, by Theorem
\ref{poly-density}(\ref{P-dense}) we may choose $f_n \in \mathcal{P}$ with
$f_n \to f$ in $L^2(\rho_a)$.  Since $e^{-tB}$ is a contraction on
$L^2$, we have $e^{-tB} f_n \to e^{-tB} f$ in $L^2$, and also $f_n
\circ \delta_{e^{-2t/a}} \to f \circ \delta_{e^{-2t/a}}$ pointwise.
\end{proof}

\begin{remark}
  In light of Theorem \ref{ZB}, our goal of understanding strong
  hypercontractivity for the holomorphic projection of the semigroup
  $e^{-{\nate t}A}$ has essentially reduced to the problem of understanding it
  for the dilation semigroup on $G$.  A related study was undertaken
  in the papers \cite{gkl2010, gkl2015}, in which the authors consider
  the dilation semigroup on real Euclidean space.  In these papers,
  the holomorphic functions are replaced with the class of
  log-subharmonic functions, and the authors examine the relationship
  between an appropriate version of strong hypercontractivity and a
  so-called strong logarithmic Sobolev inequality for such functions.
  In recent work by the first author \cite{eldredge-slsi-lsh}, these
  results are extended to real stratified Lie groups.
\end{remark}

{\nate
\begin{remark}
  The dilation semigroup also arises from the Ornstein--Uhlenbeck
  semigroup $e^{-tA}$ in another way.  In
  \cite{lust-piquard-ornstein-uhlenbeck}, the author introduces a
  ``Mehler semigroup'' $e^{-tN}$ on a stratified Lie group, defined as
  follows (after adjusting notation and time scaling):
  \begin{equation}\label{mehler}
    (e^{-tN} f)(x) = \int_G f\left(\delta_{e^{-\beta t}} (x) \cdot
    \delta_{\sqrt{1-e^{-2 \beta t}}} (y)\right)\, \rho_a(y)\,m(dy)
  \end{equation}
  where we take $\beta = 2/a$ to make our time scaling come out right.
  The name ``Mehler semigroup'' is explained by the fact that when $G
  = \mathbb{R}^n$ (i.e. a stratified Lie group of step 1), then
  (\ref{mehler}) is precisely Mehler's formula for the
  Ornstein--Uhlenbeck semigroup, so in this special case, $e^{-tN} =
  e^{-tA}$.  For a non-abelian group $G$, $e^{-tN}$ and $e^{-tA}$
  differ, and $e^{-tN}$ is a non-symmetric semigroup on $L^2(\rho_a)$.
  A simple computation shows that, formally, the generator of
  $e^{-tN}$ is $N = -\Delta + \beta X = -\Delta + \frac{2}{a} X$.  In
  particular, when $f$ is holomorphic, we have (still formally)
  \begin{equation}
    Nf = \frac{2}{a} Xf = \frac{2}{a} Zf = Bf.
  \end{equation}
  Thus our main Theorem \ref{strong-hypercontractive} below could be
  restated as giving the strong hypercontractivity of the Mehler
  semigroup $e^{-tN}$, still conditionally on the logarithmic Sobolev
  inequality (\ref{LSI}).
\end{remark}
}

As a consequence of Theorem \ref{ZB}, we can show:

\begin{theorem}\label{A-not-holomorphic}
  Except in the abelian case $G = \mathbb{C}^n$, $A$ is not holomorphic.
\end{theorem}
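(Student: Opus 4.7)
The plan is to argue by contradiction. If $A$ were holomorphic, then for each $f\in\mathcal{P}$ (which is contained in both $\mathcal{D}(A)$ and $\mathcal{D}(B)$ by Proposition \ref{P-DA} and Theorem \ref{poly-density}(\ref{P-core-B})), we would have $Af\in\mathcal{H}$, and Proposition \ref{B-holomorphic-projection} would give $Af = P_{\mathcal{H}} Af = Bf$. Combining with the identification $Bf=(2/a)Zf$ from Theorem \ref{ZB} and the fact that $Zf=kf$ on $\mathcal{P}_k$, this says $Af=(2k/a)f$. On the other hand, since $\Delta f=0$ and $\bar Z_j f=0$ for holomorphic $f$, the smooth formula (\ref{A-smooth}) together with (\ref{gZ}) gives $Af=-2\sum_j (Z_j f)\,\bar Z_j\log\rho_a$. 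Equating these two expressions produces the master identity
\begin{equation*}
\sum_j (Z_j f)\,(\bar Z_j \log\rho_a) \;=\; -\frac{k}{a}\,f,\qquad f\in\mathcal{P}_k. \quad (\star)
\end{equation*}
The goal is then to show that $(\star)$ is inconsistent with $V_2\ne 0$, i.e.\ with $G$ being non-abelian.

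I would first use $(\star)$ to pin down each $\bar Z_\ell \log\rho_a$. Choose a complex basis $e_1,\ldots,e_n$ of $V_1$ so that the horizontal frame satisfies $Z_j(e)=\tfrac{1}{2}(e_j - iJe_j)$, and let $f_\ell\in\mathcal{P}_1$ be the complex-linear functional on $G=V_1\oplus\cdots\oplus V_m$ that extracts the $\ell$-th complex coordinate of the $V_1$-component $x_1$. Since $V_1$-components add under multiplication (the only Baker--Campbell--Hausdorff contribution to $(x\cdot y)_1$ is $x_1+y_1$), $f_\ell$ is a group homomorphism into $\mathbb{C}$; by left-invariance $Z_j f_\ell$ is then a constant, and a short computation at the identity gives $Z_j f_\ell \equiv \delta_{j\ell}$. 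Applying $(\star)$ to $f=f_\ell$ yields
\begin{equation*}
\bar Z_\ell \log\rho_a \;=\; -\frac{f_\ell}{a}, \qquad \ell=1,\ldots,n.
\end{equation*}
Now assume $G$ is non-abelian, so $m\ge 2$ and $V_2=[V_1,V_1]\ne 0$; pick a nonzero complex-linear $z_\eta^*\colon V_2\to\mathbb{C}$ and set $g_\eta(x):=z_\eta^*(x_2)\in\mathcal{P}_2$. The step-$2$ part of BCH, namely $(x\cdot y)_2 = x_2+y_2+\tfrac{1}{2}[x_1,y_1]$ (all higher BCH contributions land in $V_{\ge 3}$), gives $Z_j g_\eta(x)=\tfrac{1}{2}z_\eta^*([x_1,e_j])$, a function of $x_1$ alone. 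Substituting the formula for $\bar Z_j \log \rho_a$ into $(\star)$ with $k=2$ then collapses to
\begin{equation*}
\sum_j z_\eta^*([x_1,e_j])\,f_j(x) \;=\; 4\,z_\eta^*(x_2).
\end{equation*}
The left side depends only on $x_1$ and the right only on $x_2$; setting $x_1=0$ forces $z_\eta^*\equiv 0$, contradicting the choice of $\eta$.

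The main obstacle is the bookkeeping for the three distinct complex structures in play (the complex structure $J$ on $\mathfrak{g}$, the independent scalar multiplication by $i$ on the complexified tangent bundle used to build the $(1,0)$ fields $Z_j$, and complex-linear functionals on the graded pieces $V_k$), together with verification that the coefficient in $Af|_{\mathcal{P}}$ exactly matches the $(2k/a)f$ predicted by Theorem \ref{ZB}. Once these normalizations are in place, the remainder is short, because BCH truncates at the relevant order: only the quadratic bracket term contributes to the $V_2$-projection of $x\cdot y$, independently of the total step $m$ of $G$.
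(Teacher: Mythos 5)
Your proposal is correct, but it takes a genuinely different route from the paper. The paper's proof is a one-point evaluation: it takes $\ell$ vanishing on $V_1\oplus\cdots\oplus V_{m-1}$ with $\ell(\eta)=1$ for some central $\eta\in V_m$, sets $f=\ell\in\mathcal{P}_m$, and observes that at $g=\exp(\eta)$ every horizontal derivative of $f$ vanishes (since $g\cdot\exp(t\xi)=\exp(\eta+t\xi)$ and $\ell$ kills $V_1$), whence $Af(g)=0$ while $Bf(g)=\tfrac{2m}{a}f(g)=\tfrac{2m}{a}\neq 0$; no information about $\log\rho_a$ is needed. You instead turn the hypothesis into the global identity $(\star)$, use the degree-one polynomials to \emph{solve} for $\bar Z_\ell\log\rho_a=-f_\ell/a$, and then feed a degree-two polynomial back in to reach a contradiction. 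Your computations check out: $(x\cdot y)_1=x_1+y_1$ and $(x\cdot y)_2=x_2+y_2+\tfrac12[x_1,y_1]$ are exactly the BCH truncations needed, $Z_jf_\ell\equiv\delta_{j\ell}$ and $Z_jg_\eta=\tfrac12 z_\eta^*([x_1,e_j])$ are right (using complex bilinearity of the bracket and $J$-linearity of the functionals), and the factor of $2$ you carry in $Af=-2\sum_j(Z_jf)\bar Z_j\log\rho_a$ is the one dictated by (\ref{gZ}). In fact your final contradiction is even more immediate than you state: since $\sum_j f_j(x)e_j=x_1$, the left-hand side equals $z_\eta^*([x_1,x_1])=0$ identically, so the identity reads $0=4z_\eta^*(x_2)$ outright. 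The trade-off: the paper's argument is shorter and works directly in the top stratum $V_m$, while yours is longer but extracts a stronger by-product, namely that holomorphy of $A$ would force the heat kernel to satisfy the Gaussian relation $\bar Z_\ell\log\rho_a=-f_\ell/a$, which is incompatible with any nontrivial bracket already at the level of $V_2$.
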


\begin{proof}

  Consider the decomposition $\mathfrak{g} = \bigoplus_{j=1}^m V_j$ as
  in (\ref{e.d2.1}), where $V_m \ne 0$ is the center of
  $\mathfrak{g}$.  Excluding the abelian case $G = \mathbb{C}^n$, we
  have $m > 1$.  

  Fix a nonzero $\eta \in V_m$ and let $\ell : \mathfrak{g} \to
  \mathbb{C}$ be a complex linear functional with $\ell(\eta) = 1$ and
  $\ell = 0$ on $V_1 \oplus \dots \oplus V_{m-1}$.  The exponential
  map $\exp : \mathfrak{g} \to G$ is a holomorphic diffeomorphism, so
  we can define a holomorphic function $f : G \to \mathbb{C}$ by
  $f(\exp(\xi)) = \ell(\xi)$.  (Previously we took $G = \mathfrak{g}$
  as sets, and $\exp$ to be the identity, but for now we shall write
  $\exp$ explicitly.)  In fact, $f$ is homogeneous of degree $m$, so
  $f \in \mathcal{P}_m$.  We thus have $f \in \mathcal{D}(A) \cap
  \mathcal{D}(B)$ by Theorem \ref{poly-density}(\ref{P-core-B}) and
  Proposition \ref{P-DA}.  If $Af$ were holomorphic, by Proposition
  \ref{B-holomorphic-projection} we would have $Af = Bf$.  We show
  this is not the case.

  Let $g = \exp(\eta) \in G$, so that $f(g) = 1$.  By Theorem
  \ref{ZB} and (\ref{e.d2.13}), we have $Bf = \frac{2}{a} Zf =
  \frac{2m}{a} f$, so $Bf(g) = \frac{2m}{a}$.

  On the other hand, suppose $\xi \in V_1$.  For any $t \in
  \mathbb{R}$, we have $g \cdot \exp(t \xi) = \exp(\eta) \exp(t \xi) =
  \exp(\eta + t \xi)$, since $\eta \in V_m$ commutes with $\xi$.
  Thus $f(g \cdot \exp(t \xi)) = \ell(\eta + t \xi) = 1$ since
  $\xi \in V_1$ implies $\ell(\xi) = 0$.  Differentiating with respect
  to $t$ at $t=0$, we have $\widetilde{\xi} f(g) = 0$.  Hence
  $\nabla f(g) = 0$ and so by (\ref{A-smooth})    and (\ref{sub-Laplacian-def}),
   $Af(g) = 0 \ne Bf(g)$.
\end{proof}

As an explicit example, in the complex Heisenberg group
$\mathbb{H}_3^{\mathbb{C}}$ with coordinates $(z_1, z_2, z_3)$, one
could take $f(z) = z_3$ and verify by direct computation that
$Zf(0,0,1) = 2$ while $Af(0,0,1) = 0$.

In the case of stratified Lie groups of step 2, explicit integral
formulas for the heat kernel $\rho_a$ are known
\cite{gaveau77,taylor}.  So in those cases, to show $A$ is not
holomorphic, in light of (\ref{A-smooth}) one could compute $\bar{Z}_j
\log \rho_a$ and check that it is not holomorphic.

\section{Contractivity of $e^{-tB}$}\label{sec:contractivity}

\begin{theorem}\label{contractive}
  Let $0 < p < \infty$.  For every $f \in \mathcal{H}L^p(\rho_a)$ and
  every $t \ge 0$ we have
  \begin{equation}\label{contractive-eq}
    \|f \circ \delta_{e^{-t}}\|_{L^p(\rho_a)} \le \|f\|_{L^p(\rho_a)}.
  \end{equation}
  In particular, $e^{-tB}$ extends continuously to $\mathcal{H}
  L^p(\rho_a)$ for $0 < p < 2$, and is a contraction
  on $\mathcal{H} L^p(\rho_a)$ for $0 < p < \infty$.
\end{theorem}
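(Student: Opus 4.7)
The plan is to translate the contractivity inequality into a monotonicity statement for the $L^p$-norms of $|f|^p$ with respect to the heat-kernel parameter, and then establish that monotonicity via subharmonicity of $|f|^p$.

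First I reduce the problem. The substitution $x = \delta_{e^{t}}(y)$ together with (\ref{m-dilate-integrate}) and the heat-kernel scaling (\ref{rho-dilate}) gives
\begin{equation*}
\int_G |f \circ \delta_{e^{-t}}|^p \, \rho_a\,dm = \int_G |f|^p\, \rho_{ae^{-2t}}\,dm,
\end{equation*}
so (\ref{contractive-eq}) reduces, via $s = a e^{-2t} \le a$, to the assertion that $s \mapsto \int_G |f|^p \rho_s\,dm$ is non-decreasing on $(0, \infty)$. The rest of the argument is aimed at this statement.

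The main step is to treat $f \in \mathcal{P}$ and introduce the regularization $u_\epsilon = (|f|^2 + \epsilon)^{p/2}$, which is smooth (unlike $|f|^p$ when $p < 2$ and $f$ has zeros). Using $\Delta = 4 \sum_j Z_j \bar{Z}_j$, the Cauchy--Riemann relation $\bar{Z}_j f = 0$, and $[Z_j, \bar{Z}_j] = 0$, a direct calculation gives
\begin{equation*}
\Delta u_\epsilon \;=\; 2p \sum_j (|f|^2+\epsilon)^{p/2-2} \bigl(\tfrac{p}{2}|f|^2 + \epsilon\bigr) |Z_j f|^2 \;\ge\; 0.
\end{equation*}
For $f$ a polynomial, $|f|^2 + \epsilon \ge \epsilon > 0$ and an elementary check shows $u_\epsilon \in C^2_p(G)$, so Lemma \ref{ds-Delta-X} yields $\frac{d}{ds} \int_G u_\epsilon \rho_s\,dm = \frac{1}{4} \int_G (\Delta u_\epsilon) \rho_s\,dm \ge 0$. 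Thus $s \mapsto \int_G u_\epsilon \rho_s\,dm$ is non-decreasing. Since $u_\epsilon \downarrow |f|^p$ as $\epsilon \downarrow 0$ with the $\epsilon$-uniform bound $u_\epsilon \le (|f|+1)^p$ (for $\epsilon \le 1$), which is $\rho_s$-integrable by the Gaussian bounds of Theorem \ref{heat-kernel-upper-lower}, dominated convergence delivers the desired monotonicity for every $f \in \mathcal{P}$.

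To pass to general $f \in \mathcal{H} L^p(\rho_a)$ I use density (Theorem \ref{poly-density}(\ref{P-dense}) when $p \ge 1$, and the very definition of $\mathcal{H} L^p$ when $p < 1$): choose $f_n \in \mathcal{P}$ with $f_n \to f$ in $L^p(\rho_a)$, apply the polynomial case to $f_n - f_m$ to conclude that $(f_n \circ \delta_{e^{-t}})$ is $L^p(\rho_a)$-Cauchy, and identify its limit with $f \circ \delta_{e^{-t}}$ via the local uniform convergence of $(f_n)$ among holomorphic functions. Finally, the semigroup statements follow via Corollary \ref{e-tB-dilate}: for $p \ge 2$ (where $\mathcal{H} L^p \subset \mathcal{H} L^2$) contractivity of $e^{-tB}$ on $\mathcal{H} L^p$ is immediate from (\ref{contractive-eq}), and for $0 < p < 2$ one extends $e^{-tB}$ from its dense subdomain $\mathcal{H} L^2 \subset \mathcal{H} L^p$ by uniform continuity, with the extension landing in $\mathcal{H} L^p$ since $\mathcal{H} \cap L^p$ is closed in $L^p$. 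The main technical hurdle is the subharmonicity computation together with the $\epsilon$-regularization, which is needed precisely to accommodate the case $0 < p < 2$, where $|f|^p$ fails to be smooth at the zeros of $f$ and one cannot directly apply Lemma \ref{ds-Delta-X} to $|f|^p$ itself.
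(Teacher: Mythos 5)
Your proof is correct, and its skeleton is the same as the paper's: reduce \eqref{contractive-eq} via the scaling relations to monotonicity of $s \mapsto \int_G |f|^p \rho_s\,dm$, establish $\Delta g \ge 0$ for a smooth approximation $g$ of $|f|^p$ when $f \in \mathcal{P}$, invoke Lemma \ref{ds-Delta-X}, pass to the limit, and finish by density. The one place you genuinely diverge is the choice of regularization: the paper approximates $|f|^p$ \emph{from below} by the subharmonizing sequence $g_n = u_n(|f|^2)$ with $u_n(t) = \int_0^t s^{-1}\int_0^s v_n(\sigma)\,d\sigma\,ds$ and $v_n \uparrow (p/2)^2\sigma^{p/2-1}$, so that $t u_n'' + u_n' = v_n \ge 0$ gives subharmonicity cleanly, $u_n', u_n''$ bounded gives $g_n \in C^2_p(G)$ for free, and monotone convergence needs no dominating function; you instead approximate \emph{from above} by $(|f|^2+\epsilon)^{p/2}$, compute $\Delta$ directly, and use dominated convergence with the majorant $(|f|+1)^p$. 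Both routes work; yours is slightly more direct and in fact mirrors the $\gamma_t = (|g_t|^2+\epsilon)^{1/2}$ regularization the paper itself deploys in the proof of Theorem \ref{strong-hypercontractive}, at the minor cost that membership of $(|f|^2+\epsilon)^{p/2}$ in $C^2_p(G)$ requires a separate (easy) check, since $t \mapsto (t+\epsilon)^{p/2}$ does not have bounded derivatives for large $p$ and so the paper's general remark following Notation \ref{poly-growth} does not apply verbatim.
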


\begin{proof}
  First, let us note that for any $g \in L^1(\rho_a)$, 
 the scaling relation  (\ref{semigroup-dilate}) implies
  \begin{equation}
    \int_G (g \circ \delta_{e^{-t}}) \,\rho_a\,dm = \int_G g\, \rho_{a e^{-2t}}\,dm.
  \end{equation}
  So if $g \in C^2_p(G)$ with $\Delta g \ge 0$, then Lemma
  \ref{ds-Delta-X} implies that this quantity decreases with respect
  to $t$; that is,
  \begin{equation}\label{g-decrease}
    \int_G (g \circ \delta_{e^{-t}}) \,\rho_a\,dm \le \int_G g\,
    \rho_{a}\,dm, \quad g \in C^2_p(G), \quad \Delta g \ge 0.
  \end{equation}

  We would now like to replace $g$ with some approximation of
  $|f|^p$.  To achieve this, let us first suppose that $f \in
  \mathcal{P}$; the general case will then follow from a density
  argument.  Following \cite[Lemma
    4.3]{gross-strong-hypercontractivity} we shall introduce a
  sequence of ``subharmonizing'' functions.

  Let $v \in C^\infty_c((0,\infty))$ be nonnegative, and set
  \begin{equation*}
    u(t) = \int_0^t \frac{1}{s} \int_0^s v(\sigma)\,d\sigma\,ds
  \end{equation*}
  Then it is easy to verify that:
  \begin{itemize}
  \item $u \in C^\infty([0,\infty))$;
  \item $u \ge 0$;
  \item $u', u''$ are bounded;
  \item $t u''(t) + u'(t) = v(t) \ge 0$ for all $t \ge 0$.
  \end{itemize}
  As such, if $f \in \mathcal{P}$ then $g := u(|f|^2) \in C^2_p(G)$.
  Now using the chain rule and the fact that $f$ is holomorphic (so
  that $\bar{Z}_j f = 0$), we have
  \begin{align*}
    \frac{1}{4}\Delta g  &  =
    \sum_{j=1}^{m} Z_{j} \bar{Z}_{j}u(|f|^2) \\
    &  =
    \sum_{j=1}^{m} Z_{j} \left[u'(|f|^{2}) f \overline{Z_{j}%
        f}\right]\\
    &  =\sum_{j=1}^{m}\left\{u''(|f|^{2}) \bar{f} Z_j f \cdot f
    \overline{Z_j f} + u' (|f|^{2})|Z_{j}f|^{2}\right\}\\
         &  =\sum_{j=1}^{m}\left(|f|^2 u''(|f|^{2}) +u'(|f|^{2}%
         )\right)|Z_{j}f|^{2}.
  \end{align*}
  Since $t u''(t) + u'(t) \ge 0$, we have $\Delta g \ge 0$ and so
  (\ref{g-decrease}) holds with $g = u(|f|^2)$.
  
  Now let $v_n \in C^\infty_c((0,\infty))$ be a sequence of
  nonnegative smooth functions with $v_n(\sigma) \uparrow
  \left(\frac{p}{2}\right) ^2
  \sigma^{(p/2)-1}$ for $\sigma > 0$, and as before set $    u_n(t) =
  \int_0^t \frac{1}{s} \int_0^s v_n(\sigma)\,d\sigma\,ds$ and $g_n =
  u_n(|f|^2)$.  As before, $g_n$ satisfies (\ref{g-decrease}).  
  By monotone convergence,
  \begin{equation*}
    u_n(t) \uparrow \int_0^t
  \frac{1}{s} \int_0^s \left(\frac{p}{2}\right) ^2
  \sigma^{(p/2)-1} \,d\sigma\,ds = t^{p/2}.
  \end{equation*}
  and hence $g_n \uparrow |f|^p$.  Hence using (\ref{g-decrease}) and
  monotone convergence, we have
  \begin{equation}\label{fp-decrease}
    \int_G |f \circ \delta_{e^{-t}}|^p \,\rho_a\,dm \le \int_G |f|^p
    \rho_{a}\,dm
  \end{equation}
  so that (\ref{contractive-eq}) holds for $f \in \mathcal{P}$.
  
  Now let $f \in \mathcal{H} L^p(\rho_a)$ be arbitrary.  As mentioned
  following Notation \ref{HLp-def}, $\mathcal{P}$ is dense in
  $\mathcal{H} L^p(\rho_a)$, so we may find a sequence $f_n \in
  \mathcal{P}$ with $f_n \to f$ in $L^p$ and also pointwise, so
  that in  particular $f_n \circ \delta_{e^{-t}} \to f \circ
  \delta_{e^{-t}}$ pointwise.  Now since (\ref{contractive-eq}) holds
  for $f_n$, we see that $f_n \circ
  \delta_{e^{-t}}$ is Cauchy in $L^p$, hence converges in $L^p$, and
  the limit must equal the pointwise limit $f \circ \delta_{e^{-t}}$.
  (In particular, $f \circ \delta_{e^{-t}} \in \mathcal{H} L^p(\rho_a)$.)
  Since the $p$-norm is continuous on $L^p$,  we can pass to the limit
  in (\ref{contractive-eq}) to see that it holds for $f$.
\end{proof}

\begin{corollary}
  $e^{-tB}$ is a strongly continuous contraction semigroup on
  $\mathcal{H} L^p(\rho_a)$ for $0 < p < \infty$.
\end{corollary}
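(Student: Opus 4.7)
The plan is to verify the three defining properties of a strongly continuous contraction semigroup on each space $\mathcal{H}L^p(\rho_a)$: contractivity, the semigroup identity, and strong continuity at $t=0$. Contractivity is already supplied by Theorem \ref{contractive}. Moreover, inspecting the proof of that theorem together with Corollary \ref{e-tB-dilate}, the extension is given by the explicit formula $e^{-tB}f = f\circ\delta_{e^{-2t/a}}$ for every $f\in\mathcal{H}L^p(\rho_a)$: for $f\in\mathcal{H}\cap L^2(\rho_a)$ this is Corollary \ref{e-tB-dilate}, and for general $p$ one approximates $f$ in $L^p$ by $f_n\in\mathcal{P}\subset\mathcal{H}\cap L^2(\rho_a)$ and observes that the $L^p$-limit of $e^{-tB}f_n$ must coincide with the pointwise limit $f_n\circ\delta_{e^{-2t/a}}\to f\circ\delta_{e^{-2t/a}}$.

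From this explicit description, the semigroup identity is immediate: since $\delta_\lambda\circ\delta_\mu = \delta_{\lambda\mu}$,
\begin{equation*}
e^{-tB}(e^{-sB}f) = (f\circ\delta_{e^{-2s/a}})\circ\delta_{e^{-2t/a}} = f\circ\delta_{e^{-2(t+s)/a}} = e^{-(t+s)B}f,
\end{equation*}
and $e^{-0\cdot B}f = f\circ\delta_1 = f$. To establish strong continuity, I first verify $\|e^{-tB}f-f\|_{L^p}\to 0$ as $t\downarrow 0$ on the dense subspace $\mathcal{P}$. For $f\in\mathcal{P}_k$, Theorem \ref{ZB} combined with (\ref{e.d2.13}) gives $e^{-tB}f = e^{-2tk/a}f$, so $\|e^{-tB}f - f\|_{L^p} = |e^{-2tk/a}-1|\,\|f\|_{L^p}\to 0$, and a general $f\in\mathcal{P}$ is a finite sum of such homogeneous pieces.

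For arbitrary $f\in\mathcal{H}L^p(\rho_a)$, I would combine density of $\mathcal{P}$ (Theorem \ref{poly-density}(\ref{P-dense}) for $p\ge 1$ and Notation \ref{HLp-def} for $0<p<1$) with the contractivity of $e^{-tB}$ in a standard $\varepsilon/3$ argument: given $\varepsilon>0$, choose $g\in\mathcal{P}$ with $\|f-g\|_{L^p}<\varepsilon$ and bound
\begin{equation*}
\|e^{-tB}f - f\|_{L^p}\le \|e^{-tB}(f-g)\|_{L^p} + \|e^{-tB}g - g\|_{L^p} + \|g - f\|_{L^p}\le 2\|f-g\|_{L^p} + \|e^{-tB}g - g\|_{L^p}.
\end{equation*}
Letting first $t\downarrow 0$ and then $\varepsilon\downarrow 0$ completes the proof. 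The only mild subtlety is the range $0<p<1$, where $\|\cdot\|_{L^p}$ is not a norm; however, the translation-invariant metric $(f,g)\mapsto\int|f-g|^p\,\rho_a\,dm$ is subadditive, so the same reasoning applies with $\|\cdot\|_{L^p}$ replaced by its $p$-th power. There is no genuine obstacle in this corollary; it is essentially a bookkeeping consequence of Theorem \ref{contractive}, Corollary \ref{e-tB-dilate}, and the spectral description of $B$ on $\mathcal{P}_k$.
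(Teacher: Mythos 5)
Your proposal is correct and follows essentially the same route as the paper: contractivity from Theorem \ref{contractive}, the semigroup law from $\delta_\lambda\circ\delta_\mu=\delta_{\lambda\mu}$, strong continuity first on $\mathcal{P}$ and then on all of $\mathcal{H}L^p(\rho_a)$ by the density-plus-contractivity $\varepsilon$-argument, with the $p$-th power of $\|\cdot\|_{L^p}$ handling the range $0<p<1$. The only (immaterial) difference is that on $\mathcal{P}_k$ you use the exact eigenvalue relation $e^{-tB}f=e^{-2tk/a}f$ where the paper invokes pointwise convergence plus dominated convergence.
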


\begin{proof}
  As we noted, $e^{-tB} f = f \circ \delta_{e^{-t}}$.  Hence the
  semigroup property is given by (\ref{e.d2.5}), and the previous theorem
  showed the contractivity.  To verify strong continuity, we note that
  for $f \in \mathcal{P}_k$ we have $f \circ \delta_{e^{-t}} \to f$
  pointwise, and $|f \circ \delta_{e^{-t}}| = e^{-tk} |f| \le |f|$.
  So by dominated convergence, $e^{-tB} f = f \circ \delta_{e^{-t}}
  \to f$ in $L^p$ as $t \to 0$.  By linearity, the same holds for any
  $f \in \mathcal{P}$.  For general $f \in \mathcal{H} L^p(\rho_a)$,
  we use a familiar triangle inequality argument.  Since $\mathcal{P}$
  is dense in $\mathcal{H} L^p$, for any $\epsilon$ we can choose $g
  \in \mathcal{P}$ with $\|f -g\|_{L^p} < \epsilon$.  For $p \ge 1$,
  Minkowski's triangle inequality gives
  \begin{align*}
    \|e^{-tB} f - f\|_{L^p}  &\le \|e^{-tB} (f - g)\|_{L^p} +
    \|e^{-tB} g - g\|_{L^p} + \|g - f\|_{L^p} 
    \\ &\le 2 \epsilon + \|e^{-tB} g - g\|_{L^p}
  \end{align*}
  using the contractivity of $e^{-tB}$ on the first term.  Since $g
  \in \mathcal{P}$, we know that $\|e^{-tB} g - g\|_{L^p} \to 0$ and
  hence $\limsup_{t \to 0} \|e^{-tB} f - f\|_{L^p} \le 2 \epsilon$,
  implying the desired result since $\epsilon$ is arbitrary.  For $0 <
  p < 1$, $\|\cdot\|_{L^p}$ is not a norm, but we get the same result
  by replacing $\|\cdot\|_{L^p}$ with $\|\cdot\|_{L^p}^p$ which does
  satisfy the triangle inequality.
\end{proof}

\section{Strong hypercontractivity for the dilation semigroup}\label{sec:strong}
We now state and prove our main theorem.  

We say that the heat kernel $\rho_a$ satisfies a \defword{logarithmic Sobolev
  inequality} if there exist $c > 0$ and $\beta \ge 0$ such
that 
\begin{equation}\label{LSI}
\int_G |f|^2\log|f|\rho_a\,dm
      \le cQ(f) + \beta \|f\|^2_{L^2(\rho_a)}
	    + \|f\|^2_{L^2(\rho_a)} \log\|f\|_{L^2(\rho_a)}
\end{equation}
for all $f$ such that $Q(f) < \infty$.  (In the case $\beta > 0$,
(\ref{LSI}) is sometimes called a \defword{defective} logarithmic
Sobolev inequality.)

{\nate
\begin{remark}
  To the best of our knowledge, it is currently an open problem to
  determine whether the logarithmic Sobolev inequality (\ref{LSI}) is
  satisfied in all complex stratified Lie groups $G$.  As such, our
  main Theorem \ref{strong-hypercontractive} is necessarily
  conditional in nature, taking (\ref{LSI}) as a hypothesis.  However,
  in Section \ref{sec:heis} below, we discuss the particular case of
  the complex Heisenberg and Heisenberg--Weyl groups, for which
  (\ref{LSI}) is known to hold \cite{eldredge-gradient,
    li-gradient-h-type}, and which therefore serve as a concrete
  example to which our theorem applies.  It would be of great interest
  to have additional examples of groups satisfying (\ref{LSI}).
\end{remark}
}

For $0 < q \le p < \infty$, let
\begin{align}
t_J(p,q) &:= \frac{c}{2}\log\left(\frac{p}{q}\right)              \label{tJ-def} \\
\intertext{and}
M(p,q) &:= \exp\left(2\beta\left(\frac{1}{q}-\frac{1}{p}\right)\right)     \label{Mpq-def}  
\end{align}

\begin{theorem} \label{strong-hypercontractive}
  Suppose that the logarithmic Sobolev inequality (\ref{LSI}) holds and that
  $0 < q \le p < \infty$. Then for every $f \in \mathcal{H}
  L^q(\rho_a)$ and every $t \ge t_J(p,q)$,
  \begin{equation}
    \|e^{-tB}f\|_{L^p(\rho_a)} \le M(p,q)\|f\|_{L^q(\rho_a)}
    \label{strong-hypercontractive-eq}
  \end{equation}
\end{theorem}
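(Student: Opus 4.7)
The plan is to establish \eqref{strong-hypercontractive-eq} at the critical time $t=t_J(p,q)$ for $f\in\mathcal{P}$, and then extend. The extension to $t>t_J$ is automatic: writing $e^{-tB}f=e^{-(t-t_J)B}\bigl(e^{-t_JB}f\bigr)$ and using the contractivity of $e^{-(t-t_J)B}$ on $\mathcal{H} L^{p}(\rho_a)$ from Theorem \ref{contractive} yields \eqref{strong-hypercontractive-eq} once the critical case is known. The extension from $\mathcal{P}$ to $\mathcal{H} L^{q}(\rho_a)$ is by approximation: $\mathcal{P}$ is dense in $\mathcal{H} L^{q}(\rho_a)$ for every $0<q<\infty$ (see Notation \ref{HLp-def}), and the inequality on $\mathcal{P}$ shows that $e^{-t_JB}$ sends $L^{q}$-Cauchy sequences to $L^{p}$-Cauchy sequences with limit matching $e^{-t_JB}f$ by continuity.

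The core is a Gross-style ODE argument. Set $\Lambda(t):=\|e^{-tB}f\|_{L^{p(t)}(\rho_a)}$ with the exponent $p(t):=qe^{2t/c}$, chosen so that $p(0)=q$, $p(t_J)=p$, and crucially $c\,p'(t)=2\,p(t)$. Writing $u(t):=e^{-tB}f$ and $N(t):=\Lambda(t)^{p(t)}=\int_G|u|^{p(t)}\rho_a\,dm$, I differentiate. By Theorem \ref{ZB} and \eqref{Z-X-poly}, on $\mathcal{P}$ we have $\partial_tu=-Bu=-(2/a)Zu=-(2/a)Xu$, and the chain rule gives $\partial_t|u|^{p(t)}=p'(t)|u|^{p(t)}\log|u|-(2/a)X(|u|^{p(t)})$. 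Integrating against $\rho_a$, converting $\int_GX(|u|^{p(t)})\rho_a\,dm=(a/2)\int_G\Delta(|u|^{p(t)})\rho_a\,dm$ via Lemma \ref{ds-Delta-X}, and using the key holomorphic identity $\Delta|u|^{p}=p^{2}|u|^{p-2}\sum_j|Z_ju|^2$ (which I derive from $\bar Z_ju=0$, $Z_j\overline{Z_ju}=0$, and $\Delta=4\sum_jZ_j\bar Z_j$) yields
\begin{equation*}
  N'(t)=p'(t)\int_G|u|^{p(t)}\log|u|\,\rho_a\,dm\;-\;4\,Q\bigl(|u|^{p(t)/2}\bigr),
\end{equation*}
where I also use $Q(|u|^{p/2})=(p^{2}/4)\int_G|u|^{p-2}\sum_j|Z_ju|^2\,\rho_a\,dm$ from the same chain-rule computation (via the formulas \eqref{gZ} and the vanishing of the mixed $(2,0)$, $(0,2)$ pieces).

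Applying the hypothesized LSI \eqref{LSI} to $v:=|u|^{p(t)/2}$, and using $\|v\|_{L^2}^{2}=N(t)$ together with $\int_G|v|^2\log|v|\,\rho_a\,dm=(p(t)/2)\int_G|u|^{p(t)}\log|u|\,\rho_a\,dm$, produces an upper bound on the log-integral. Substituting into $(d/dt)\log\Lambda=N'(t)/(p(t)N(t))-p'(t)\log N(t)/p(t)^2$, the $\log N$ contributions cancel exactly, and the remaining terms reorganize to
\begin{equation*}
  \frac{d}{dt}\log\Lambda(t)\;\le\;\frac{2\beta\,p'(t)}{p(t)^{2}}+\frac{2\,Q(v)}{p(t)\,N(t)}\left(\frac{c\,p'(t)}{p(t)}-2\right).
\end{equation*}
The exact choice $c\,p'(t)=2\,p(t)$ makes the second bracket vanish identically, leaving $(d/dt)\log\Lambda(t)\le 4\beta/(c\,p(t))$. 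Integrating from $0$ to $t_J(p,q)$ and evaluating $\int_{0}^{t_J}4\beta/(c\,q\,e^{2t/c})\,dt=2\beta(1/q-1/p)$ gives $\Lambda(t_J)\le M(p,q)\,\Lambda(0)$, which is the desired bound.

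The principal technical obstacle is the regularity of $v=|u|^{p(t)/2}$ when $p(t)<2$ and $u$ vanishes somewhere, so that $v$ is not smooth on the zero set of $u$. This I handle by the standard device of replacing $v$ with the smooth approximation $v_\varepsilon:=(|u|^2+\varepsilon)^{p(t)/4}$, verifying the pointwise identities and integrations by parts for $v_\varepsilon$, and then passing to $\varepsilon\downarrow 0$ by monotone or dominated convergence; the polynomial growth of $|u|$ for $u\in\mathcal{P}$ combined with the Gaussian decay of $\rho_a$ from Theorem \ref{heat-kernel-upper-lower} supplies the required integrable majorants and also justifies differentiation of $N(t)$ under the integral sign in the spirit of Lemma \ref{ds-Delta-X}. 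A minor point worth noting is that $Q$ is being evaluated on the real, non-holomorphic function $v$, which is legitimate because $(Q,\mathcal{D}(Q))$ was defined as a Dirichlet form on all of $L^2(\rho_a)$, not merely on $\mathcal{H} L^{2}$.
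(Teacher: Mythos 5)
Your proposal is correct and follows essentially the same route as the paper's proof: the same exponent function $r(t)=qe^{2t/c}$, the same conversion of the $X$-term to $\int_G\Delta(\cdot)\,\rho_a\,dm$ via Lemma \ref{ds-Delta-X}, the same holomorphic identity reducing $\Delta|u|^{r}$ to $4|\nabla|u|^{r/2}|^2$, and the same cancellation of the $\log N$ terms after applying \eqref{LSI}. The only difference is presentational — you run the formal computation on $|u|^{r/2}$ and defer the regularization $(|u|^2+\varepsilon)^{r/4}$ to a closing remark, whereas the paper carries $\gamma_t=(|g_t|^2+\varepsilon)^{1/2}$ through the entire argument (its identity \eqref{calculus-claim} showing the extra $\varepsilon$-term has a favorable sign) — but this is the same device.
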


\begin{proof}
  Fix $0 < q \le p < \infty$.  We shall concentrate first on the case
  when $f \in \mathcal{P}$; let us say $f$ has degree $D$, so $f \in
  \bigoplus_{k=0}^D \mathcal{P}_k$.  The general case will then follow
  by a density argument as in the proof of Theorem \ref{contractive}.
  We also note that it is sufficient to
  prove that  (\ref{strong-hypercontractive-eq}) holds
  for $t = t_J(p,q)$, since if this can be shown then using Theorem
  \ref{contractive} we conclude that for any $t \ge t_J$,
  \begin{equation*}
    \|e^{-tB}f\|_{L^p} = \|e^{-t_J B} (e^{-(t-t_J) B} f)\|_{L^p} \le
    M(p,q) \|e^{-(t-t_J)B} f\|_{L^q} \le M(p,q) \|f\|_{L^q}.
  \end{equation*}

  We adopt similar notation as in
  \cite[Section 4]{gross-hypercontractivity-complex}, which we
  generally follow.  Let
  \begin{equation*}
    g_t := e^{-tB} f.
  \end{equation*}
  Since $\mathcal{P}_k$ is invariant under $B$ (Corollary \ref{e-tB-dilate} and
  Lemma \ref{l.d2.11}), $g_t$ is a smooth curve in the
  finite-dimensional space $\bigoplus_{k=0}^D \mathcal{P}_k$.
  Indeed, if $f = \sum_{k=0}^D f_k$ with  $f_k \in \mathcal{P}_k$, we
  have $g_t = \sum_{k=0}^D e^{-2tk/a} f_k$.

  Fix $\epsilon > 0$ and let
  \begin{align*}
    \gamma_t &:= \left(|g_t|^2 + \epsilon\right)^{1/2} \\
    r(t) &:= qe^{2t/c} \\
    v(t) &:= \int \gamma_t(x)^{r(t)} \rho_a(x)\,m(dx) \\
    \alpha(t) &:= \|\gamma_t\|_{L^{r(t)}(\rho_a)} = v(t)^{1/r(t)}.
  \end{align*}
  Notice that $\gamma_t \in C^2_p(G)$ (see Notation \ref{poly-growth})
  and in particular $v(t)$, $\alpha(t)$ are finite for all $t$.  Also
  notice that $r(t_J) = p$.  Our goal will be to show $\alpha(t_J) \le
  M(p,q) \alpha(0)$, which when taking $\epsilon \to 0$ turns into (\ref{strong-hypercontractive-eq})
  with $t = t_J$.  We will do this by deriving an
  appropriate differential inequality for $\alpha$.

  Simple calculus shows
  \begin{equation}\label{alpha-prime}
    \alpha'(t) = \alpha(t) v(t)^{-1} 
           \left(r(t)^{-1} v'(t) - \frac{2}{c} v(t) \log \alpha(t)\right).
  \end{equation}
  To attack this, we
  differentiate under the integral sign to show
  \begin{align}
    v'(t) &= \int_G \gamma_t^{r(t)} \left( r'(t) \log \gamma_t +
    \frac{r(t)}{\gamma_t} \gamma'_t\right)
    \rho_a\,dm \label{vprime-first} \\ 
    &= \frac{2r(t)}{c}  \int_G \gamma_t^{r(t)} \log \gamma_t\, \rho_a\,dm +
    r(t) \int_G \gamma_t^{r(t)-1} \gamma'_t\, \rho_a\,dm  &&
     \\
    &= \frac{2r(t)}{c} \int_G \gamma_t^{r(t)} \log \gamma_t \,\rho_a\,dm -
    r(t) \Re \int_G \gamma_t^{r(t)-2} Bg_t \cdot \overline{g_t}\,
    \rho_a\,dm. \label{vprime-last}
  \end{align}
  To check that differentiation under the integral sign is justified,
  fix a bounded interval $[t_1, t_2]$ containing $t$, and note that
  since $s \mapsto g_s$ is a continuous curve in the holomorphic
  polynomials of degree $D$, there is a constant $C$ so that $|g_s(x)|
  + |g'_s(x)| \le C (1+d(e,x))^D$ for all $s \in [t_1, t_2]$.  Since $\gamma_t$ is bounded below
  and $r, r'$ are bounded on $[t_1, t_2]$ by some constant $R$, it
  follows that for $t \in [t_1, t_2]$ the integrand on the right side
  of (\ref{vprime-first}) is dominated by some constant times
  $(C(1+d(e,x))^D)^{R+1} \rho_a(x)$,  which is integrable.

  Let $I := r \Re \int_G \gamma^{r-2} Bg \cdot \bar{g}\,
  \rho_a\,dx$ be the second term in (\ref{vprime-last}).  (For
  notational hygiene, we suppressed the explicit dependence on $t$,
  and will continue to do so when convenient.)  We wish to estimate
  $I$ from below using the logarithmic Sobolev inequality, so we need
  to convert it into an expression involving $Q$.

  Since $g$ is a polynomial, by Theorem \ref{ZB} and
  (\ref{Z-X-poly}), we have $B g = \frac{2}{a} Zg = \frac{2}{a} X g$, so that
  \begin{equation*}
    I = \frac{2r}{a} \Re \int_G \gamma^{r-2} Xg \cdot \bar{g}\, \rho_a\,dm.
  \end{equation*}
  But $X$ is a real vector field, so an easy computation shows $X
  [|g|^2] = 2 \Re [Xg \cdot \bar{g}]$ and hence $X [\gamma^{r}] =
  r \gamma^{r-2}  \Re [X g \cdot \bar{g}]$.  Since $\gamma^r \in
  C^2_p(G)$, by Lemma \ref{ds-Delta-X} we have
  \begin{equation*}
    I = \frac{2}{a} \int_G X[\gamma^{r}] \rho_a\,dm = 
    \int_G \Delta[\gamma^{r}] \rho_a\,dm.
  \end{equation*}
  Now using elementary calculus, we may show:
  \begin{equation}\label{calculus-claim}
    \Delta[\gamma^r] = 4 |\nabla \gamma^{r/2}|^2  +
    r \epsilon \gamma^{r-4} |\nabla g|^2.
  \end{equation}
  To see this, let $Z_j$ be the vector fields defined in (\ref{Zj-def}),
  which are of type $(1,0)$, so that $\Delta = 4\sum_j  Z_j \bar{Z}_j$.
  We have
  \begin{align*}
      4 Z_j \bar{Z}_j [\gamma^r] &= 4 Z_j \left[ \frac{r}{2} \gamma^{r-2}
        \cdot \left( \cancel{\bar{Z}_j g} \cdot \bar{g} + g \cdot \bar{Z}_j
        \bar{g} \right) \right] \\
      &= 2r \cdot \frac{r-2}{2} \gamma^{r-4} \cdot \left( Z_j g \cdot
        \bar{g} + g \cdot \cancel{Z_j \bar{g}}\right) (g \cdot \bar{Z}_j)
        \\
        & \quad + 2r \gamma^{r-2} \left(Z_j g \cdot \bar{Z}_j \bar{g} + g
        \cdot \cancel{Z_j \bar{Z}_j \bar{g}}\right)
        \end{align*}
since $Z_j \bar{Z}_j \bar{g} = \bar{Z}_j Z_j \bar{g} =
0$.  Now rearranging,
\begin{align*}
  4 Z_j \bar{Z}_j[ \gamma^r] &= r(r-2) \gamma^{r-4} |Z_j g |^2 |g|^2 + 2r \gamma^{r-2} |Z_j
        g|^2
        \\
        &= r^2 \gamma^{r-4} |Z_j g|^2 |g|^2 + 2r \gamma^{r-4} |Z_j
        g|^2 (\gamma^2 - |g|^2) \\
        &= r^2 \gamma^{r-4} |Z_j g|^2 |g|^2 + 2r \epsilon \gamma^{r-4} |Z_j
        g|^2
    \end{align*}
since $\gamma^2 - |g|^2 = \epsilon$.
    On the other hand,
    \begin{equation*}
      Z_j [\gamma^{r/2}] = \frac{r}{4} \gamma^{\frac{r-4}{2}} Z_j g \cdot \bar{g}
    \end{equation*}
    so that
    \begin{equation*}
      4 Z_j \bar{Z}_j [\gamma^r] = 16 |Z_j [\gamma^{r/2}]|^2 + 2r \epsilon
      \gamma^{r-4} |Z_j g|^2.
    \end{equation*}
    Summing over $j$ and referring to
    (\ref{gradsq-real}--\ref{gradsq-holo}), we obtain
    (\ref{calculus-claim}).

    In particular, since the second term of (\ref{calculus-claim}) is
    nonnegative,
    \begin{equation*}
      \Delta [\gamma^r] \ge 4 | \nabla[ \gamma^{r/2}]|^2.
    \end{equation*}
    So integrating gives
    \begin{equation*}
      I \ge 4 Q(\gamma^{r/2}).
    \end{equation*}

  Now applying the logarithmic Sobolev inequality (\ref{LSI}) and
  noting that $\big\lVert \gamma_t^{r(t)/2}\big\rVert_{L^2(\rho_a)}^2 = v(t)$, it follows
  that
  \begin{align*}
    I \ge \frac{2r(t)}{c} \int_G \gamma_t^{r(t)} \log \gamma_t\, \rho_a\,dm -
    \frac{4\beta}{c} v(t) - \frac{2}{c} v(t) \log v(t)
  \end{align*}
  Referring back to (\ref{vprime-last}), this shows
  \begin{equation}
    v'(t) \le \frac{4\beta}{c} v(t) + \frac{2}{c} v(t) \log v(t) =
    \frac{4\beta}{c} v(t) + \frac{2r(t)}{c} v(t) \log \alpha(t)
  \end{equation}
  and thus from (\ref{alpha-prime})
  \begin{equation}
    \alpha'(t) \le \frac{4 \beta \alpha(t)}{c r(t)}.
  \end{equation}
  In other words,
  \begin{equation}
    \frac{d}{dt} \log \alpha(t) \le \frac{4\beta}{c r(t)} = \frac{4\beta
    }{cq} e^{-2t/c} 
  \end{equation}
  so, integrating,
  \begin{equation}\label{alpha-ineq}
    \alpha(t) \le \alpha(0)
    \exp\left(\frac{2\beta}{q}(1-e^{-2t/c})\right) = \alpha(0) \exp\left(2
    \beta \left(\frac{1}{q} - \frac{1}{r(t)}\right)\right).
  \end{equation}
  Now let $\epsilon \downarrow 0$, so that $\gamma_t \downarrow
  |g_t|$, and by dominated convergence, 
  $\alpha(t) \downarrow \|g_t\|_{L^{r(t)}(\rho_a)} = \|e^{-tB} f \|_{L^{r(t)}(\rho_a)}$.  
  Taking $t = t_J$ and recalling that $r(t_J) = p$, (\ref{alpha-ineq})
  becomes
  \begin{equation}
    \|e^{-t_J B} f\|_{L^p(\rho_a)} \le M(p,q) \|f\|_{L^q(\rho_a)}
  \end{equation}
  which is precisely (\ref{strong-hypercontractive-eq}) with $t = t_J$.  This completes the
  proof for $f \in \mathcal{P}$.

  For general $f \in \mathcal{H} L^q(\rho_a)$, proceed as in the last
  paragraph of the proof of Theorem \ref{contractive}.  Choose a
  sequence $f_n \in \mathcal{P}$ with $f_n \to f$ in $L^q$-norm.  Then
  (\ref{strong-hypercontractive-eq}) holds for $f_n$.  As
  $n \to \infty$, the
  right side of (\ref{strong-hypercontractive-eq}) converges to $M(p,q) \|f\|_{L^q(\rho_a)}$.
  Since $e^{-tB}$ is a contraction on $\mathcal{H} L^p$ by Theorem
  \ref{contractive}, $e^{-tB} f_n$ is Cauchy in $L^p$ norm, so
  converges in $L^p$ to some function which can only be $e^{-tB} f$.
  Hence the left side of (\ref{strong-hypercontractive-eq}) converges to $\|e^{-tB}
  f\|_{L^p(\rho_a)}$ as desired.
\end{proof}

\section{Application to the complex Heisenberg group}\label{sec:heis}

In order for Theorem \ref{strong-hypercontractive} to have content, we
need examples of stratified complex groups for which the logarithmic
Sobolev inequality (\ref{LSI}) is satisfied.  In this section, we
verify that the complex Heisenberg group $\mathbb{H}_3^{\mathbb{C}}$
of Examples \ref{ex-heis} and \ref{ex-heis-metric} enjoys that
property, as do the complex Heisenberg--Weyl groups
$\mathbb{H}_{2n+1}^{\mathbb{C}}$ of Examples \ref{ex-heis-weyl} and
\ref{ex-heis-weyl-metric}.  So for these groups, the hypotheses of our
Theorem \ref{strong-hypercontractive} are satisfied.  On the other
hand, since as shown in Theorem \ref{A-not-holomorphic}, the
operator $A$ is not holomorphic in this setting, the results of
\cite{gross-hypercontractivity-complex} do not apply, so we have
proved something new.

{\nate Indeed, the papers \cite{eldredge-gradient}  \nate and
\cite{li-gradient-h-type}  showed independently that so-called H-type
Lie groups satisfy a gradient estimate which is known to imply the
logarithmic Sobolev inequality (\ref{LSI}).}  We shall state that
result, check that the complex Heisenberg group
$\mathbb{H}_3^{\mathbb{C}}$ is an H-type Lie group, and sketch in the
steps leading to (\ref{LSI}).  The same argument, \emph{mutatis
  mutandis}, also applies to the Heisenberg-Weyl groups
$\mathbb{H}_{2n+1}^{\mathbb{C}}$; we omit the details because they add
notation but no further insight.

\begin{definition}\label{h-type-def}
  Suppose $\mathfrak{g}$ is a \emph{real} Lie algebra equipped with a
  positive definite inner product $\langle \cdot, \cdot \rangle$.  For
  $u,v \in \mathfrak{g}$, define $J_u v$ via
\begin{equation*}
  \langle J_u v, w \rangle = \langle u, [v,w] \rangle.
\end{equation*}
Let $\mathfrak{z}$ be the center of $\mathfrak{g}$, and $\mathfrak{v}
= \mathfrak{z}^\perp$.  We say $(\mathfrak{g}, \langle \cdot, \cdot
\rangle)$ is \defword{H-type} if:
\begin{enumerate}
\item \label{h-type-decomp} $[\mathfrak{v}, \mathfrak{v}] = \mathfrak{z}$; and
\item \label{h-type-rigid} For each $u \in \mathfrak{z}$ with $\|u\|=1$, $J_u$ maps
  $\mathfrak{v}$ isometrically onto itself.
\end{enumerate}

An \defword{H-type Lie group} is a connected, simply connected real Lie
group $G$ equipped with an inner product $\langle \cdot, \cdot
\rangle$ on its Lie algebra $\mathfrak{g}$ such that $(\mathfrak{g},
\langle \cdot , \cdot \rangle)$ is H-type in the above sense.
\end{definition}

Suppose then that $(G, \langle \cdot, \cdot \rangle)$ is an H-type Lie
group.  By item \ref{h-type-decomp} of definition \ref{h-type-def},
$G$ is nilpotent, so we may fix a bi-invariant Haar measure $m$ which
is simply (a scalar multiple of) Lebesgue measure.  Let $\xi_1, \dots,
\xi_n$ be an orthonormal basis for $\mathfrak{v} \subset
\mathfrak{g}$, let $\widetilde{\xi_1}, \dots, \widetilde{\xi_n}$ be
the corresponding left-invariant vector fields, and define the
sub-Laplacian by $\Delta = \widetilde{\xi_1}^2 + \dots +
\widetilde{\xi_n}^2$.  Also, for sufficiently smooth $f$ let $|\nabla
f|^2 := |\widetilde{\xi_1} f|^2 + \dots + |\widetilde{\xi_n} f|^2$.
The main theorem of \cite{eldredge-gradient} {\nate and
\cite{li-gradient-h-type}} is:

\begin{theorem}\label{h-type-gradient}
  If $(G, \langle \cdot, \cdot \rangle)$ is H-type, then following the
  above notation, there is a constant $K$ such that for all $t \ge 0$
  and $f \in C^1_c(G)$
  we have
  \begin{equation}
    |\nabla e^{t \Delta/4} f| \le K e^{t \Delta/4} |\nabla f|.
  \end{equation}
\end{theorem}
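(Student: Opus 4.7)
The plan is to prove the gradient estimate via a Bakry--Émery style semigroup interpolation, adapted to the sub-Riemannian setting by introducing a time-dependent modification of the gradient that incorporates derivatives in the central direction. Let $T_1, \dots, T_k$ be left-invariant vector fields corresponding to an orthonormal basis of the center $\mathfrak{z} \subset \mathfrak{g}$; write $\Gamma(u) = |\nabla u|^2 = \sum_i |\widetilde{\xi}_i u|^2$ for the horizontal carr\'e du champ and $\Gamma^Z(u) = \sum_j |T_j u|^2$ for the vertical one, and let $L = \Delta/4$ with its iterated carr\'e du champ operators $\Gamma_2$ and $\Gamma_2^Z$.

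For fixed $f \in C_c^\infty(G)$, $t > 0$, and a positive function $\alpha:[0,t]\to\mathbb{R}$ with $\alpha(t)=0$, I would consider the interpolating quantity
\[
\phi(s) \;=\; P_s\,\sqrt{\,\Gamma(P_{t-s}f) + \alpha(s)\,\Gamma^Z(P_{t-s}f) + \epsilon\,}, \qquad s\in[0,t],
\]
with a regularising parameter $\epsilon>0$. As $\epsilon\downarrow 0$ the endpoints satisfy $\phi(0)\ge |\nabla P_t f|$ and $\phi(t) = P_t|\nabla f|$, so a differential inequality of the form $\phi'(s)\ge -\beta\,\phi(s)$ on $[0,t]$ would yield the desired bound with constant $K=e^{\beta t}$. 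Differentiating $\phi$ and using $\partial_s P_{t-s}f = -L P_{t-s}f$ together with the chain rule for $L$ applied to $\sqrt{\,\cdot\,}$ reduces the sign question for $\phi'(s)$ to a pointwise lower bound on $\Gamma_2(u) + \alpha(s)\,\Gamma_2^Z(u)$ in terms of $\Gamma(u)$ and $\Gamma^Z(u)$, where $u = P_{t-s}f$.

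The required inequality is a generalized curvature-dimension bound of the form
\[
\Gamma_2(u) + \nu\,\Gamma_2^Z(u) \;\ge\; \Bigl(\rho_1 - \tfrac{\kappa}{\nu}\Bigr)\,\Gamma(u) + \rho_2\,\Gamma^Z(u),
\]
valid for every $\nu>0$, with constants $\rho_1\in\mathbb{R}$ and $\rho_2,\kappa>0$ depending only on $G$. Given this inequality, one chooses $\alpha(s)$ essentially by solving a first-order ODE, so that $\alpha'(s)\,\Gamma^Z(u)$ balances the $-\tfrac{\kappa}{\nu(s)}\,\Gamma(u)$ contribution; a Gr\"onwall argument followed by the limit $\epsilon\downarrow 0$ then completes the proof.

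The main obstacle, and the heart of the argument, is the verification of the generalized CD inequality, and this is where the H-type hypothesis is indispensable. The rigidity condition~(\ref{h-type-rigid}) of Definition~\ref{h-type-def}, namely that $J_\eta:\mathfrak{v}\to\mathfrak{v}$ is isometric for each unit $\eta\in\mathfrak{z}$, is algebraically equivalent to a pointwise identity on the structure constants $c^k_{i\ell} = \langle [\xi_i,\xi_\ell],\eta_k\rangle$ which forces a certain quadratic form in the second derivatives $\widetilde{\xi}_i\widetilde{\xi}_\ell u$ and the vertical derivatives $T_k u$ to be non-negative definite. Step-$2$ nilpotence keeps the number of bracket terms finite and tractable, while the isometric condition is precisely what delivers $\rho_2>0$ and hence enables the scheme. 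Without it the relevant quadratic form has no sign and the interpolation collapses; once it is in place, the remaining steps (differentiation under the integral, regularisation, the explicit choice of $\alpha(s)$, and the $\epsilon\downarrow 0$ limit) are routine.
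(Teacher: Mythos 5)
There is a genuine gap at the heart of your sketch, in the step where you claim that differentiating
\[
\phi(s) = P_s\sqrt{\Gamma(P_{t-s}f)+\alpha(s)\,\Gamma^Z(P_{t-s}f)+\epsilon}
\]
``reduces the sign question for $\phi'(s)$ to a pointwise lower bound on $\Gamma_2(u)+\alpha(s)\Gamma_2^Z(u)$ in terms of $\Gamma(u)$ and $\Gamma^Z(u)$.'' It does not. Writing $F=\Gamma(u)+\alpha\Gamma^Z(u)+\epsilon$, the chain rule for the diffusion operator gives $L\sqrt{F}=\frac{LF}{2\sqrt{F}}-\frac{\Gamma(F)}{4F^{3/2}}$, so
\[
\phi'(s)=P_s\!\left[\frac{2\Gamma_2(u)+2\alpha\Gamma_2^Z(u)+\alpha'\Gamma^Z(u)}{2\sqrt{F}}-\frac{\Gamma(F)}{4F^{3/2}}\right],
\]
and the negative term $-\Gamma(F)/4F^{3/2}$ is \emph{not} controlled by any inequality of the form $\Gamma_2+\nu\Gamma_2^Z\ge(\rho_1-\kappa/\nu)\Gamma+\rho_2\Gamma^Z$. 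In the elliptic Bakry--\'Emery setting the $L^1$ gradient bound is rescued by the \emph{self-improved} Bochner inequality $\Gamma_2(u)\ge\rho\,\Gamma(u)+\Gamma\bigl(\sqrt{\Gamma(u)}\bigr)$, which exactly absorbs this term; no analogue of that self-improvement is available for the generalized curvature-dimension inequality on H-type groups. What your scheme actually delivers (dropping the square root and running the same interpolation on $P_s(\Gamma(P_{t-s}f)+\alpha(s)\Gamma^Z(P_{t-s}f))$, which is the Baudoin--Garofalo argument) is the $L^2$ bound $|\nabla e^{t\Delta/4}f|^2\le K^2\,e^{t\Delta/4}|\nabla f|^2$. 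That is strictly weaker than the stated $L^1$ bound and, importantly, is not enough for the application in this paper: the derivation of the logarithmic Sobolev inequality in \cite[Theorem 6.1]{bbbc-jfa} uses the $L^1$ form essentially (one needs $\Gamma(P_ug)\le K^2\bigl(P_u\sqrt{\Gamma(g)}\bigr)^2\le K^2\,P_u(\Gamma(g)/g)\,P_u g$ by Cauchy--Schwarz, which requires the pointwise $L^1$ estimate).

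For context, the paper does not prove Theorem \ref{h-type-gradient} at all; it cites it from \cite{eldredge-gradient} and \cite{li-gradient-h-type}. Those proofs proceed by entirely different means: H.-Q.~Li's argument for the Heisenberg group rests on the explicit (Gaveau-type) formula for the heat kernel, and the proof in \cite{eldredge-gradient} for general H-type groups uses precise pointwise upper and lower bounds for $\rho_t$ and its horizontal derivatives combined with a direct comparison argument. The $L^1$ gradient estimate was a well-known open problem precisely because the $\Gamma_2$-type machinery you describe does not reach it; if you want to pursue a semigroup proof you would need to supply a substitute for the missing self-improvement, which is the entire difficulty, not a routine step.
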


\begin{lemma}
  Consider $\mathbb{H}_3^{\mathbb{C}}$ as a 6-dimensional real Lie
  group.  As a set, $\mathfrak{h}_3^{\mathbb{C}} = \mathbb{C}^3 =
  \mathbb{R}^6$, so equip it with the Euclidean inner product $\langle
  \cdot, \cdot \rangle$.  Then $(\mathbb{H}_3^{\mathbb{C}}, \langle
  \cdot, \cdot \rangle)$ is H-type.
\end{lemma}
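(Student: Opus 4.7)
The plan is to verify the two defining conditions of Definition \ref{h-type-def} by direct computation in a real orthonormal basis of $\mathfrak{h}_3^{\mathbb{C}}$. Write $z_j = x_j + i y_j$ and let $X_j = \partial/\partial x_j|_e$, $Y_j = \partial/\partial y_j|_e$ for $j=1,2,3$, which form an orthonormal basis for $\mathfrak{h}_3^{\mathbb{C}} \cong \mathbb{R}^6$ under the Euclidean inner product $\langle \cdot, \cdot \rangle$. From the bracket formula \eqref{heis-bracket} it is immediate that the center is $\mathfrak{z} = \operatorname{span}_{\mathbb{R}}\{X_3, Y_3\}$ and hence $\mathfrak{v} = \operatorname{span}_{\mathbb{R}}\{X_1, Y_1, X_2, Y_2\}$.

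First I would expand the complex expression $z_1 z_2' - z_1' z_2$ into its real and imaginary parts to read off the nonzero brackets between the basis vectors of $\mathfrak{v}$; this yields
\begin{equation*}
  [X_1, X_2] = X_3, \quad [X_1, Y_2] = Y_3, \quad [Y_1, X_2] = Y_3, \quad [Y_1, Y_2] = -X_3,
\end{equation*}
with $[X_1, Y_1] = [X_2, Y_2] = 0$. This immediately establishes condition (\ref{h-type-decomp}): $X_3 = [X_1, X_2]$ and $Y_3 = [X_1, Y_2]$ both lie in $[\mathfrak{v}, \mathfrak{v}]$, so $[\mathfrak{v}, \mathfrak{v}] = \mathfrak{z}$.

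For condition (\ref{h-type-rigid}), rather than check isometry directly for every unit vector $u \in \mathfrak{z}$, I would use the standard equivalent formulation that $(\mathfrak{g}, \langle \cdot, \cdot \rangle)$ is H-type iff $J_u^2 = -\|u\|^2 I_{\mathfrak{v}}$ for all $u \in \mathfrak{z}$ (this follows because $J_u$ is skew-symmetric with respect to $\langle \cdot, \cdot\rangle$, and a skew-symmetric operator is an isometry iff its square is $-I$). Then I would compute $J_{X_3}$ and $J_{Y_3}$ on the basis of $\mathfrak{v}$ by using $\langle J_u v, w \rangle = \langle u, [v,w]\rangle$ together with the bracket table above. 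A short tabulation gives
\begin{equation*}
  J_{X_3}: X_1 \mapsto X_2,\ Y_1 \mapsto -Y_2,\ X_2 \mapsto -X_1,\ Y_2 \mapsto Y_1,
\end{equation*}
\begin{equation*}
  J_{Y_3}: X_1 \mapsto Y_2,\ Y_1 \mapsto X_2,\ X_2 \mapsto -Y_1,\ Y_2 \mapsto -X_1,
\end{equation*}
from which one sees by direct inspection that $J_{X_3}^2 = J_{Y_3}^2 = -I_{\mathfrak{v}}$ and $J_{X_3} J_{Y_3} + J_{Y_3} J_{X_3} = 0$.

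With these three relations in hand, the general case follows from bilinearity: for $u = aX_3 + bY_3$ with $a^2+b^2 = 1$,
\begin{equation*}
  J_u^2 = a^2 J_{X_3}^2 + ab(J_{X_3} J_{Y_3} + J_{Y_3} J_{X_3}) + b^2 J_{Y_3}^2 = -(a^2+b^2) I_{\mathfrak{v}} = -I_{\mathfrak{v}},
\end{equation*}
establishing (\ref{h-type-rigid}). There is no serious obstacle: the only slightly delicate point is bookkeeping the sign conventions in the bracket table and remembering that $J_u$ is skew rather than symmetric, but everything is finite-dimensional linear algebra once the brackets are written down.
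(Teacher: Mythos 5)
Your proof is correct, and your bracket table, the computations of $J_{X_3}$ and $J_{Y_3}$, and the relations $J_{X_3}^2 = J_{Y_3}^2 = -I_{\mathfrak{v}}$, $J_{X_3}J_{Y_3}+J_{Y_3}J_{X_3}=0$ all check out. The identification of $\mathfrak{z}$ and $\mathfrak{v}$ and the verification of condition (1) coincide with the paper's. Where you diverge is in handling condition (2): the paper never writes out the real bracket table. Instead it observes that, because the bracket is complex bilinear and the Euclidean inner product is the real part of a Hermitian one, the map $(u,v)\mapsto J_u v$ satisfies $J_{\alpha u}(\beta v)=\alpha\bar{\beta}\,J_u v$ for $\alpha,\beta\in\mathbb{C}$; combined with the two computations $J_{e_3}e_1=e_2$ and $J_{e_3}e_2=-e_1$, this handles every unit vector $\alpha e_3$ in the center in one stroke, since a conjugate-linear map with unimodular multiplier swapping the two complex lines of $\mathfrak{v}$ is manifestly an isometry. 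Your route works entirely over $\mathbb{R}$, using the standard equivalence (for skew-symmetric $J_u$) between the isometry condition and $J_u^2=-\|u\|^2 I_{\mathfrak{v}}$, and then polarizes via the anticommutator. The paper's argument is shorter because it exploits the complex structure that is already present; yours is more self-contained and is the computation one would carry out for a general (not necessarily complex) candidate H-type algebra, at the cost of a little more bookkeeping. Both are complete; the only point you leave implicit that deserves a word is that $J_u$ maps $\mathfrak{v}$ into $\mathfrak{v}$ (immediate since $\langle J_u v, w\rangle=\langle u,[v,w]\rangle=0$ for $w\in\mathfrak{z}$, and also visible from your explicit tables).
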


\begin{proof}
  Let $\{e_j, ie_j : j = 1,2,3\}$ be the standard basis of
  $\mathfrak{h}_3^{\mathbb{C}} = \mathbb{C}^3 = \mathbb{R}^6$, which
  is orthonormal with respect to the (real) Euclidean inner product
  $\langle \cdot, \cdot \rangle$.  Then the center $\mathfrak{z}$ of
  $\mathfrak{h}_3^{\mathbb{C}}$ is spanned (over $\mathbb{R}$) by $\{e_3, ie_3\}$, so
  $\mathfrak{v} = \mathfrak{z}^\perp$ is spanned by $\{e_1, ie_1, e_2,
  ie_2\}$.  By inspection of the Lie bracket defined in
  (\ref{heis-bracket}), we see that $[\mathfrak{v}, \mathfrak{v}] =
  \mathfrak{z}$.

  Next, we note that for $u,v,w \in \mathfrak{h}_3^{\mathbb{C}}$ and
  $\alpha, \beta \in \mathbb{C}$, we have
  \begin{equation}
    \langle J_{\alpha u} (\beta v), w \rangle = \langle \alpha u, [ \beta v, w] \rangle = \langle u, 
            [ v, \bar{\alpha} \beta w] \rangle = \langle J_u v,
            \bar{\alpha} \beta w \rangle = \langle \alpha \bar{\beta}
            J_u v, w \rangle
  \end{equation}
  so that $J_u v$ is complex-linear in $u$ and conjugate-linear in
  $v$.  Together with the relations $J_{e_3} e_1 = e_2$, $J_{e_3} e_2
  = -e_1$, we easily see that for any $\alpha \in \mathbb{C}$ with
  $|\alpha| = 1$, we have that $J_{\alpha e_3}$ is an isometry of
  $\mathfrak{v}$ into itself.  
\end{proof}

Now we note that when the dual metric $h$ is defined on
$(\mathfrak{h}_3^{\mathbb{C}})^*$ as in Example \ref{ex-heis-metric},
the backward annihilator $H$ is precisely $\mathfrak{v}$, and the
metric $g$ is just the restriction of $\langle \cdot, \cdot \rangle$
to $H$.  Hence the sub-Laplacian $\Delta$ used in Theorem
\ref{h-type-gradient} is the same as that defined in
(\ref{sub-Laplacian-def}), and for smooth real $f$, the squared
gradient $|\nabla f|$ of Theorem \ref{h-type-gradient} is equal to
$h(df, df)$ in the notation of Section \ref{sub-riemannian-section}.

\begin{theorem}
  It follows from Theorem \ref{h-type-gradient} that the logarithmic
  Sobolev  inequality (\ref{LSI}) holds for $\mathbb{H}_3^{\mathbb{C}}$, with $c =
  2 K^2 a$ and $\beta = 0$, where $K$ is the constant from Theorem \ref{h-type-gradient}.
\end{theorem}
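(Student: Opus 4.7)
The plan is to invoke the standard Bakry--\'Emery semigroup argument that converts a gradient bound for a heat semigroup into a logarithmic Sobolev inequality for its reference measure, here $\mu_a := \rho_a\,dm$. Write $P_t = e^{t\Delta/4}$ for the heat semigroup. Since $\rho_a$ is symmetric under inversion (as $\Delta$ is self-adjoint on $L^2(m)$ and $m$ is bi-invariant), the identity $e^{a\Delta/4}f = f\ast\rho_a$ specializes to
\begin{equation*}
  \int_G f\,d\mu_a = (P_a f)(e),
\end{equation*}
which is the device that lets us extract an integral inequality against $\mu_a$ from a pointwise inequality about $P_a$.

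The heart of the argument is the Bakry--\'Emery interpolation. For smooth strictly positive $f$, set $F(s) := P_s\!\left((P_{t-s}f)\log(P_{t-s}f)\right)$ on $[0,t]$. Using $\partial_t P_t = \tfrac{1}{4}\Delta P_t$ together with the chain-rule identity $\Delta\phi(u) - \phi'(u)\Delta u = \phi''(u)|\nabla u|^2$ applied to $\phi(u) = u\log u$, one computes
\begin{equation*}
  F'(s) = \frac{1}{4}\,P_s\!\left(\frac{|\nabla P_{t-s}f|^2}{P_{t-s}f}\right).
\end{equation*}
Applying the gradient estimate $|\nabla P_{t-s}f| \le K\,P_{t-s}|\nabla f|$ from Theorem \ref{h-type-gradient} and then the Cauchy--Schwarz inequality for the Markov kernel $P_{t-s}$ applied to the decomposition $|\nabla f| = \sqrt{f}\cdot(|\nabla f|/\sqrt{f})$ gives
\begin{equation*}
  \frac{|\nabla P_{t-s}f|^2}{P_{t-s}f} \le K^2\,P_{t-s}\!\left(\frac{|\nabla f|^2}{f}\right).
\end{equation*}
Applying $P_s$ and using the semigroup property yields $F'(s) \le \tfrac{K^2}{4}P_t(|\nabla f|^2/f)$, which is independent of $s$. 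Integrating from $0$ to $a$ and evaluating at $e$ produces an entropy/Fisher-information inequality for $\mu_a$ with constant proportional to $K^2 a$.

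To recover the stated form (\ref{LSI}), I would substitute $f = |g|^2 + \epsilon$, send $\epsilon \downarrow 0$, and use $|\nabla f|^2/f = 4|\nabla g|^2$ for $f = g^2$, converting the Fisher-information form to a $Q(g)$ bound with $\beta = 0$ and $c$ equal to the promised constant multiple of $K^2 a$. For complex-valued $g$, the diamagnetic inequality $|\nabla|g||^2 \le g(\nabla g,\nabla\bar g)$ gives $Q(|g|) \le Q(g)$ and reduces the complex case to the real one. The same argument, with only notational changes, covers the Heisenberg--Weyl groups $\mathbb{H}_{2n+1}^{\mathbb{C}}$.

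The main obstacle is not the algebra of the Bakry--\'Emery chain but the justification of the formal steps: smoothness of $P_{t-s}f$, regularity near $\{f = 0\}$, and the integrability needed to differentiate under the integral sign. These are standard and can be controlled using hypoellipticity of $\Delta$, the Gaussian-type bounds of Theorem \ref{heat-kernel-upper-lower}, and the derivative estimates of Theorem \ref{heat-kernel-deriv}, after first proving the inequality for $f \in C_c^\infty(G)$ with $\epsilon$-regularization and then extending by density to all $f$ with $Q(f) < \infty$, using the lower semicontinuity of both sides.
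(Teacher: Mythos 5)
Your proposal is correct and follows essentially the same route as the paper: the paper's proof consists of citing the standard Bakry--\'Emery semigroup interpolation (via \cite[Theorem 6.1]{bbbc-jfa}), which converts the pointwise gradient bound $|\nabla P_t f| \le K P_t|\nabla f|$ into a logarithmic Sobolev inequality for the heat kernel measure, and your argument is exactly that interpolation written out, including the reverse-Cauchy--Schwarz step $(P_{t-s}|\nabla f|)^2 \le P_{t-s}(f)\,P_{t-s}(|\nabla f|^2/f)$ and the evaluation at the identity via $\int_G f\,\rho_a\,dm = (P_a f)(e)$. If you track the constants carefully (remembering that the generator is $\Delta/4$, so the carr\'e du champ is $\tfrac14|\nabla f|^2$ while $Q(f)=\int|\nabla f|^2\rho_a\,dm$), the argument actually yields $c = K^2 a/2$, which is stronger than and hence implies the stated $c = 2K^2 a$ since $Q \ge 0$.
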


\begin{proof}
   This can be proved by an elementary, though clever, argument in the
   style of $\Gamma_2$-calculus, which can be found in \cite[Theorem
     6.1]{bbbc-jfa}.  The essence of this argument, which is an
   equivalence between gradient bounds and the logarithmic Sobolev
   inequality, goes back to \cite{bakry-emery-short}.
\end{proof}

\begin{corollary}
  Theorem \ref{strong-hypercontractive} holds for the complex Heisenberg and
  Heisenberg--Weyl groups $\mathbb{H}_{2n+1}^{\mathbb{C}}$, with
  $t_J(p,q) = K^2 a \log\left(\frac{p}{q}\right)$ and $M(p,q) = 1$,
  where $K$ is the constant from Theorem \ref{h-type-gradient}.
\end{corollary}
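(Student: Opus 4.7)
The plan is entirely mechanical: invoke Theorem \ref{strong-hypercontractive} with the constants $c$ and $\beta$ supplied by the preceding theorem (and its Heisenberg--Weyl analog). Since that theorem establishes the logarithmic Sobolev inequality (\ref{LSI}) on $\mathbb{H}_3^{\mathbb{C}}$ with $c = 2K^2 a$ and $\beta = 0$, the constants defined in (\ref{tJ-def}) and (\ref{Mpq-def}) specialize to
$$t_J(p,q) = \frac{c}{2}\log(p/q) = K^2 a \log(p/q), \qquad M(p,q) = \exp(0) = 1,$$
which are exactly the values claimed.

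The only genuine step of content is extending the H-type verification from $\mathbb{H}_3^{\mathbb{C}}$ to the general Heisenberg--Weyl group $\mathbb{H}_{2n+1}^{\mathbb{C}}$. Viewing $\mathbb{H}_{2n+1}^{\mathbb{C}}$ as a real $(4n+2)$-dimensional Lie group with the real Euclidean inner product on $\mathbb{C}^{2n+1} = \mathbb{R}^{4n+2}$, I would let $\{e_j, i e_j : 1 \le j \le 2n+1\}$ be the standard orthonormal basis, identify the center $\mathfrak{z} = \mathrm{span}_{\mathbb{R}}\{e_{2n+1}, i e_{2n+1}\}$, and set $\mathfrak{v} = \mathfrak{z}^\perp$. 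From the bracket in Example \ref{ex-heis-weyl} one sees directly that $[\mathfrak{v}, \mathfrak{v}] = \mathfrak{z}$, and that $J_{e_{2n+1}}$ pairs $e_{2k-1}$ with $\pm e_{2k}$ and $i e_{2k-1}$ with $\pm i e_{2k}$ for each $k$, so $J_{e_{2n+1}}$ is an isometry of $\mathfrak{v}$. The complex-linearity of $u \mapsto J_u v$ and conjugate-linearity of $v \mapsto J_u v$ (derived exactly as in the proof for $\mathbb{H}_3^{\mathbb{C}}$) then promote this to the statement that $J_{\alpha e_{2n+1}}$ is an isometry of $\mathfrak{v}$ for every unit $\alpha \in \mathbb{C}$, verifying the H-type condition.

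Once H-type is in hand, Theorem \ref{h-type-gradient} provides the gradient estimate, and the $\Gamma_2$-calculus argument of \cite{bbbc-jfa} delivers (\ref{LSI}) on $\mathbb{H}_{2n+1}^{\mathbb{C}}$ with the same constants $c = 2K^2 a$ and $\beta = 0$ as in the $n=1$ case (the constant $K$ from Theorem \ref{h-type-gradient} depends on $n$, but nothing in the derivation of $t_J$ and $M$ does). Applying Theorem \ref{strong-hypercontractive} with these constants yields the corollary. I do not expect any substantive obstacle: the sub-Riemannian geometry set up in Section \ref{sub-riemannian-section} was designed so that the sub-Laplacian and $|\nabla f|^2$ agreed with the H-type notions used in Theorem \ref{h-type-gradient}, so the chain of implications goes through without modification.
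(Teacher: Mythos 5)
Your proposal is correct and takes essentially the same route as the paper: the corollary is obtained by substituting $c = 2K^2 a$ and $\beta = 0$ into (\ref{tJ-def}) and (\ref{Mpq-def}), and the paper likewise treats $\mathbb{H}_{2n+1}^{\mathbb{C}}$ by the same \emph{mutatis mutandis} extension of the H-type verification that you sketch (the paper explicitly omits those details).
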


\begin{remark}
  The foregoing argument would apply to any complex stratified Lie
  group which is H-type.  Since the complex stratified groups and the
  H-type groups are each rather large classes, one might think there
  would be many more such examples.  However, there are actually no
  more: the first author has shown in \cite{eldredge-complex-h-type}
  that the complex Heisenberg--Weyl Lie algebras are the only complex
  Lie algebras which are H-type under a Hermitian inner product.
\end{remark}

\noindent\textit{Acknowledgments.}  The authors would like to thank
Bruce K.~Driver for helpful discussions regarding this paper.  We
would also like to thank the anonymous referees for several very
helpful suggestions, including the relevance of the paper
\cite{lust-piquard-ornstein-uhlenbeck}.

The first author's research was supported by a grant from the Simons
Foundation (\#355659, Nathaniel Eldredge).  The third author's
research was supported in part by National Science Foundation grant
DMS 1404435.

\bibliographystyle{plainnat}
\bibliography{allpapers}

\end{document}